\let\oldenumerate=\enumerate
	\def\enumerate{
	\oldenumerate
	\setlength{\itemsep}{5pt}
	}
\let\olditemize=\itemize
	\def\itemize{
	\olditemize
	\setlength{\itemsep}{5pt}
	}
\renewcommand{\Re}{\operatorname{Re}}
\renewcommand{\Im}{\operatorname{Im}}
\newcommand{\D}{\mathbb{D}}
\newcommand{\C}{\mathbb{C}}
\newcommand{\CC}{\mathsf{C}}
\newcommand{\Z}{\mathbb{Z}}
\renewcommand{\O}{\mathscr{O}}
\newcommand{\T}{\mathbb{T}}
\newcommand{\R}{\mathbb{R}}
\newcommand{\RO}{\mathsf{RO}}
\newcommand{\RR}{\mathsf{R}}
\newcommand{\prodk}{\prod_{k=0}^{\infty}}
\newcommand{\norm}[1]{\| \, #1 \,\|}
\renewcommand{\phi}{\varphi}
\renewcommand{\emptyset}{\varnothing}
\newcommand{\N}{\mathfrak{N}}
\renewcommand{\leq}{\leqslant}
\renewcommand{\geq}{\geqslant}
\numberwithin{equation}{section}
\theoremstyle{plain}
\newtheorem{Corollary}[equation]{Corollary}
\newtheorem*{Corollary*}{Corollary}
\newtheorem{Theorem}[equation]{Theorem}
\newtheorem*{Theorem*}{Theorem}
\newtheorem{Lemma}[equation]{Lemma}
\theoremstyle{definition}
\newtheorem{Definition}[equation]{Definition}
\newtheorem{Example}[equation]{Example}
\begin{document}
\bibliographystyle{amsplain}

\title{Real complex functions}

\author{Stephan Ramon Garcia}
\address{   Department of Mathematics\\
Pomona College\\
Claremont, California\\
91711 \\ USA}
\email{Stephan.Garcia@pomona.edu}
\thanks{First author partially supported by National Science Foundation Grant DMS-1265973.}

\author{Javad Mashreghi}
\address{D\'epartament de Mathematiques et de Statistique, Universit\'e Laval, Qu\'ebec, QC, G1K 7P4, Canada}
\email{javad.mashreghi@mat.ulaval.ca}
\thanks{Second author partially supported by NSREC}

\author{William T. Ross}
\address{   Department of Mathematics and Computer Science\\
University of Richmond\\
Richmond, Virginia\\
23173 \\ USA}
\email{wross@richmond.edu}

\begin{abstract}
We survey a few classes of analytic functions on the disk that have real boundary values almost everywhere on the unit circle. 
We explore some of their properties, various decompositions, and some connections these functions make to operator theory. 
\end{abstract}

\maketitle

\section{Introduction}

In this survey paper we explore certain classes of analytic functions on the open unit disk $\D$ that
have real non-tangential limiting values almost everywhere on the unit circle $\T$.  
These classes enjoy some remarkable analytic, algebraic, and structural properties that connect to various problems in operator theory. 
In particular, these classes can be used to describe the kernel of a Toeplitz operator on the Hardy space $H^2$; 
to give an alternate description of the pseudocontinuable functions on $H^2$ (alternatively the non-cyclic vectors for the backward shift operator); 
to define a class of unbounded symmetric Toeplitz operators on $H^2$; 
and to define an analogue of the classical Riesz projection operator for the Hardy spaces $H^p$ when $0 < p < 1$. 

Much of this material originates in the papers \cite{SCSSC, MR2186351, MR2021044}, which, in turn,
stem from seminal work of  Aleksandrov \cite{Alek-1981, MR643380} and Helson \cite{Helson, Helson2}.  We do, however, provide
many new examples and a few novel results not discussed in the works above.  We also endeavor to make this
survey accessible and thus include as many proofs as reasonable.  We hope the reader
will  be able to follow along
and eventually make their own contributions.

\section{Function spaces}

In this section we review a few definitions and basic results needed for this survey.
The details and proofs can be found in the well-known texts \cite{Duren, Garnett}.

\subsection{Lebesgue spaces} 
Let $\D$ denote the open unit disk in the complex plane $\C$ and
let $\T$ denote the unit circle.  We let $m$ denote normalized Lebesgue measure on $\T$ and for $0 < p < \infty$, we let $L^p := L^p(\T,dm)$ denote the space of Lebesgue measurable functions on $\T$ for which 
\begin{equation*}
\|f\|_{p} := \left(\int_{\T} |f|^p \,dm\right)^{1/p} < \infty.
\end{equation*}
When $0 < p < 1$, the preceding does not define a norm (the Triangle Inequality is violated) although $d(f, g) = \|f - g\|_{p}^{p}$ defines a translation invariant metric with respect to which $L^p$ is complete. 
When $1 \leq p < \infty$, the norm $\|\cdot\|_{p}$ induces a well-known Banach space structure on $L^p$.
When $p = 2$, $L^2$ is a Hilbert space equipped with the standard inner product
\begin{equation*}
\langle f, g \rangle = \int_{\T} f \overline{g} \,dm
\end{equation*}
and orthonormal basis $\{\zeta^{n}: n \in \Z\}$. 
When $p = \infty$, $L^{\infty}$ will denote the Banach algebra of essentially bounded functions on $\T$
endowed with the essential supremum norm $\|f\|_{\infty}$. 

If $f \in L^1$, then the function $\mathscr{P} f$ defined on $\D$ by 
\begin{equation}\label{eq:PIF}
(\mathscr{P} f)(z) = \int_{\T} f(\zeta) P_{z}(\zeta) \,dm(\zeta),
\end{equation}
denotes the {\em Poisson extension} of $f$ to $\D$,
where 
$$P_{z}(\zeta) = \Re\left(\frac{\zeta + z}{\zeta - z}\right), \quad \zeta \in \T, z \in \D.$$
 The function $\mathscr{P} f$ is harmonic on $\D$ and a theorem of Fatou says that
\begin{equation*}
\lim_{r \to 1^{-}} (\mathscr{P} f)(r \xi) = f(\xi) \quad \mbox{a.e. $\xi \in \T$}.
\end{equation*} 
We also let 
\begin{equation}\label{eq:QPIF}
(\mathscr{Q} f)(z) = \int_{\T} f(\zeta) Q_{z}(\zeta) \,dm(\zeta), 
\end{equation}
denote the {\em conjugate Poisson extension} of $f$,
where 
$$\qquad Q_{z}(\zeta) = \Im\left(\frac{\zeta + z}{\zeta - z}\right), \quad \zeta \in \T, z \in \D.$$
  The function $\mathscr{Q} f$ is also harmonic on $\D$ and 
\begin{equation}\label{ConPoi}
\widetilde{f}(\xi):=\lim_{r \to 1^{-}} (\mathscr{Q} f)(r \xi)
\end{equation} 
exists for almost every $\xi \in \T$, though the proof is more involved than for $\mathscr{P} f$.  The function $\mathscr{Q} f$ is the harmonic conjugate of $\mathscr{P} f$. One often thinks in terms of boundary functions and says that $\widetilde{f}$ is the harmonic conjugate of $f$. If $f$ has Fourier series
$$f \sim \sum_{n \in \Z} \widehat{f}(n) \zeta^n, \quad \widehat{f}(n) = \int_{\T} f(\zeta) \overline{\zeta}^{n} \,dm(\zeta),$$
then the conjugate function $\widetilde{f}$ has Fourier series 
\begin{equation}\label{sdiyfwr9efhudjk}
\widetilde{f}  \sim -i \sum_{n \not = 0} \mbox{sgn}(n) \widehat{f}(n) \zeta^n.
\end{equation}
A well-known theorem of M.~Riesz 
ensures that if $1 < p < \infty$ and $f \in L^p$ then $\widetilde{f} \in L^p$. This is known to fail when $p=1$ and $p=\infty$. 

\subsection{Hardy spaces}
For $0 < p < \infty$, the \emph{Hardy space} $H^p$ is the set of analytic functions $f$ on $\D$ for which
\begin{equation*}
\norm{f}_p:= \left(\sup_{0 < r < 1} \int_{\T} |f(r \zeta)|^p \,dm(\zeta) \right)^{1/p}< \infty.
\end{equation*}
When $1 \leq p < \infty$, $H^p$ is a separable Banach space while when $0 < p < 1$, $d(f, g) = \|f - g\|_{p}^{p}$ defines a translation invariant metric with respect to which $H^p$ is complete and separable.
We let $H^{\infty}$ denote the Banach algebra of all bounded analytic functions on $\D$ endowed with the norm 
$$\|f\|_{\infty} = \sup_{z \in \D}|f(z)|.$$  The Hardy spaces are nested in the sense that
$$H^q \subseteq H^p \iff p \leq q.$$

For each $f \in H^p$ with $0 < p < \infty$, the limit
\begin{equation*}
f(\xi):=\lim_{r \to 1^{-}} f(r \xi)
\end{equation*}
exists for almost every $\xi \in \T$ and 
\begin{equation}\label{eq:werwer}
\sup_{0 < r < 1} \int_{\T} |f(r \zeta)|^p \,dm(\zeta) = \lim_{r \to 1^{-}} \int_{\T} |f(r \zeta)|^p \,dm(\zeta) = \int_{\T} |f|^p \,dm.
\end{equation} 
Via these radial boundary values, $H^p$ can be identified with a closed subspace of $L^p$. In fact, for $p \geqslant 1$, 
\begin{equation*}
H^p = \Big\{f \in L^p: \widehat{f}(n) = 0, \,\, \forall n \leqslant -1\Big\}.
\end{equation*}

By a theorem of M. Riesz, the integral operator 
\begin{equation}\label{xcxcxrxrx}
f \mapsto  \int_{\T} \frac{f(\zeta)}{1 - \overline{\zeta} z} dm(\zeta), \quad f \in L^p,
\end{equation}
maps $L^p$ continuously onto $H^p$ when $1 < p < \infty$. In terms of Fourier series, this Riesz projection is equivalently defined by 
$$\sum_{n \in \Z} \widehat{f}(n) \zeta^n \mapsto \sum_{n \geq 0} \widehat{f}(n) \zeta^n.$$
The Riesz projection does not define a bounded operator from $L^1$ to $H^1$ nor a bounded operator from $L^{\infty}$ to $H^{\infty}$. We will revisit a version of this ``Riesz projection'' in \eqref{xcxcxrxrx} later on for subspaces of $L^p$ when $0 < p < 1$. 

\subsection{The canonical factorization} Each $f \in H^p$ can be factored as
$f = I F$, where $I \in H^{\infty}$ is an \emph{inner function}
and $F \in H^p$ is an \emph{outer function}. A general outer function is of the form 
$$F(z) = e^{i \gamma} \exp\left(\int_{\T} \frac{\zeta + z}{\zeta - z} \log w(\zeta) dm(\zeta)\right),$$ 
where $\gamma \in \R$,  $w \geqslant 0$, and $\log w \in L^1$. When $F \in H^p$ and outer, we have $\log |F| \in L^1$, $|F| \in L^p$,  and $w = \log |F|$. From \eqref{eq:PIF} and \eqref{ConPoi}, the radial boundary function  $F$ becomes 
\begin{equation}\label{bdryouter}
F = \exp(w + i \widetilde{w} + i \gamma) \quad \mbox{a.e.~on $\T$},
\end{equation}
which will be important later on. Specific examples of outer functions include: any zero free function that is analytic in a neighborhood of the closed unit disk; $f \in H^1$ with $\Re f > 0$;  and, in particular, functions of the form $1 + g$ where $g$ is an analytic function with $g(\D) \subseteq \D$.

The inner factor $I$ is a bounded analytic function on $\D$ with unimodular boundary values almost everywhere on $\T$ (the definition of an inner function) and can be factored further as 
\begin{equation}\label{eujdhdu998}
I = B S_{\mu}.
\end{equation} 
In the above,
$$B(z) = \xi z^{N} \prod_{n \geqslant 1} \frac{\overline{a_n}}{|a_n|} \frac{a_n - z}{1 - \overline{a_n} z},$$
is a {\em Blaschke product} with zeros at the origin (of multiplicity $N$) and at $\{a_n\}_{n \geqslant 1} \subseteq \D \setminus \{0\}$ (repeated according to multiplicity) with 
$$\sum_{n \geqslant 1} (1 - |a_n|) < \infty$$
(the {\em Blaschke condition}); $\xi$ is a unimodular constant; and 
$$S_{\mu}(z) = \eta \exp\left(-\int_{\T} \frac{\zeta + z}{\zeta - z} d\mu(\zeta)\right),$$
 called the {\em singular inner factor}, 
where $\mu$ is a finite positive measure on $\T$ with $\mu \perp m$ and $\eta$ is a unimodular constant. Up to unimodular constants, the factors in the canonical factorization are unique.

\subsection{Smirnov class} 

If $\mathscr{O}(\D)$ denotes the set of all analytic functions on $\D$, we define the following sub-classes of analytic functions: 
\begin{align*}
\mathfrak{N} & = \left\{\frac{f}{g}: f \in H^{\infty}, g \in H^{\infty} \setminus \{0\}\right\},\\
N &= \mathfrak{N} \cap \mathscr{O}(\D),\\
N^{+} & = \left\{\frac{f}{g}: f, g \in H^{\infty}, g \; \mbox{outer}\right\}.
\end{align*}
The class $\mathfrak{N}$ consists of the {\em meromorphic functions of bounded type}, $N$ is called the {\em Nevanlinna class}, and $N^{+}$ is called the {\em Smirnov class}. Note that 
$$N^{+} \subseteq N \subseteq \mathfrak{N}$$ and a standard theorem shows that 
$$\bigcup_{p > 0} H^p \subseteq N^{+}.$$
Functions in $\mathfrak{N}$ have radial boundary values a.e.~on $\T$. Furthermore, the radial boundary function of an element of $\frak{N} \setminus \{0\}$ is log-integrable.  As a consequence of the canonical factorization theorem, each $f \in \mathfrak{N} \setminus \{0\}$ can be written as 
\begin{equation}\label{eq:NFT}
f = \xi \frac{u_1}{u_2} F,
\end{equation}
where $\xi$ is a unimodular constant, $u_1, u_2$ are inner, and $F$ is outer. If $f \in N$, then $u_2$ is a singular inner function.
If $f \in N^{+}$, we simply have $f = I F,$ where I is inner and $F$ is outer. In particular, all inner functions and all outer functions belong to $N^{+}$.  
Focusing on $N^{+}$, the quantity 
\begin{equation}\label{eq:NMetric}
\rho(f, g) = \int_{\T} \log(1 + |f - g|) dm
\end{equation}
defines a metric on $N^{+}$ which makes $N^{+}$ a complete topological algebra. 

Before leaving this subsection, we want to state a theorem of Smirnov which says that 
\begin{equation}\label{SmirnovThm}
f \in N^{+} \; \;  \mbox{and} \; \;  f \in L^p \implies f \in H^p.
\end{equation}

\subsection{Classes of real complex functions}

We now arrive at the main focus of this survey: analytic functions on $\D$ that have
real boundary values a.e.~on $\T$. We introduce several classes of such functions and 
then focus on each particular class in a separate section. To get started, we define the following classes of functions:
\begin{align*}
\RR  &= \{f \in \mathfrak{N}: \mbox{$f$ is real valued a.e.\! on $\T$}\}, \\
\RR ^{+} &= \RR  \cap N^{+}, \\
\mathsf{RO} &= \{f \in \RR ^{+}: \mbox{$f$ is outer}\},\\
\RR ^{p} &= \RR ^{+} \cap H^p.
\end{align*}
The class $\RR$ is the {\em real Nevanlinna class}, $\RR^{+}$ the {\em real Smirnov class}, $\mathsf{RO}$ the {\em real outer functions}, and $\RR^{p}$  the {\em real $H^p$ functions}. We will give plenty of examples of these ``real'' functions below. As one of the simplest examples, consider 
$$f(z) = i \frac{1 + z}{1 - z} \in N^{+}.$$
By direct computation, one shows that whenever $\theta \not = \pi/2$ or $3 \pi/2$,
$$f(e^{i \theta}) = \cot\Big(\frac{\theta}{2}\Big) \in \R$$
and thus $f \in \RR^{+}$. In fact, $f \in \mathsf{RO} \cap \RR^p$ for all $0 < p < 1$. 

\section{Elementary observations}

There are a number elementary observations that can be made about real Smirnov functions.
Most of these involve standard results about Poisson integrals, linear fractional transformations, and inner-outer factorization in $N^+$.
These results, however simple, set the stage for the deeper results that are to follow.

\subsection{Helson's representation}\label{Subsection:Helson}

    The following theorem of Helson \cite{Helson2} (see also \cite{Helson}) provides a concrete description of several classes of real Smirnov functions.
    Unfortunately, it is difficult to use in practice since it involves sums and differences of inner functions.  For instance, it is difficult to determine the inner-outer factorization of a sum or difference of inner functions.

    \begin{Theorem}[Helson]\label{Theorem:Helson}
        Let $f \in \mathfrak{N}$.
        \begin{enumerate}
            \item If $f \in \RR$, then there are relatively prime inner functions $\psi_1$ and $\psi_2$ so that
              \begin{equation}\label{eq:Helson}
                    f =i \frac{\psi_1 + \psi_2}{\psi_1 -  \psi_2}.
              \end{equation}
             Up to a common unimodular constant factor, the inner functions $\psi_1$ and $\psi_2$ in \eqref{eq:Helson} are uniquely
             determined by $f$.  
            
             \item If $f \in \RR ^+$, 
             then there are relatively prime inner functions $\psi_1$ and $\psi_2$ so that 
             $\psi_1 -  \psi_2$ is outer and $f$ is of the form \eqref{eq:Helson}.
             
             \item If $f \in \mathsf{RO}$, 
             then there are relatively prime inner functions $\psi_1$ and $\psi_2$ so that 
             $\psi_1^2 - \psi_2^2$ is outer and $f$ is of the form \eqref{eq:Helson}.
         \end{enumerate}
    \end{Theorem}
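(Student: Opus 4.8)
The plan is to reduce all three parts to the Cayley transform. Solving \eqref{eq:Helson} for the ratio $\psi_2/\psi_1$ shows that $f = i\frac{\psi_1+\psi_2}{\psi_1-\psi_2}$ is equivalent to
\[
\frac{\psi_2}{\psi_1} = \frac{f-i}{f+i},
\]
the image of $f$ under the Cayley transform $\omega \mapsto (\omega-i)/(\omega+i)$, which carries $\R \cup \{\infty\}$ onto $\T$. Thus the entire theorem becomes a statement about expressing the bounded-type function $(f-i)/(f+i)$ as a quotient of coprime inner functions and then tracking inner and outer factors through the factorization \eqref{eq:NFT}.

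For part (a), since $f \in \mathfrak{N}$ and $f+i \not\equiv 0$ (as $f$ is real a.e., it cannot equal $-i$), the function $g := (f-i)/(f+i)$ again lies in $\mathfrak{N}$. Because $f$ is real a.e.\ on $\T$ and the Cayley transform maps $\R$ into $\T$, the function $g$ is unimodular a.e.\ on $\T$. The key structural fact I would invoke is that a function in $\mathfrak{N}$ which is unimodular a.e.\ is a quotient of two inner functions: writing $g = \xi(u_1/u_2)F$ via \eqref{eq:NFT} with $F$ outer, the inner factors $u_1,u_2$ are unimodular a.e., so $|F|=1$ a.e., forcing the outer function $F$ to be a unimodular constant. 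Cancelling the greatest common inner divisor of $u_1$ and $u_2$ yields relatively prime inner $\psi_1,\psi_2$ with $g=\psi_2/\psi_1$, and inverting the Cayley transform recovers \eqref{eq:Helson}. Uniqueness is then immediate: two such representations give $\psi_2/\psi_1 = \phi_2/\phi_1 = g$, and cross-multiplying together with coprimeness forces $\phi_j = c\psi_j$ for a common unimodular constant $c$.

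For parts (b) and (c), the preliminary observation is that if $\psi_1,\psi_2$ are relatively prime inner, then $\psi_1+\psi_2$ and $\psi_1-\psi_2$ share no non-constant inner divisor: any common inner divisor $J$ divides the sum $2\psi_1$ and the difference $2\psi_2$, hence divides $\gcd(\psi_1,\psi_2)$ and is constant. Factoring $\psi_1\pm\psi_2 = J_\pm G_\pm$ into inner times outer, we obtain $f = i(J_+/J_-)(G_+/G_-)$, where by the previous observation $J_+/J_-$ is already in lowest terms and $G_+/G_-$ is outer. Comparing with the (essentially unique) factorization \eqref{eq:NFT} identifies the inner part of the denominator of $f$ as exactly $J_-$. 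If $f \in \RR^+ = \RR \cap N^+$, then $f = IF$ has trivial denominator inner factor, so $J_-$ is constant and $\psi_1-\psi_2$ is outer, proving (b). For (c), assume further $f \in \RO$, so $f$ is outer; rearranging \eqref{eq:Helson} gives $\psi_1+\psi_2 = -if(\psi_1-\psi_2)$, and since both $f$ and (by (b)) $\psi_1-\psi_2$ are outer, the right side is outer, whence $\psi_1+\psi_2$ is outer. Therefore $\psi_1^2-\psi_2^2 = (\psi_1-\psi_2)(\psi_1+\psi_2)$ is a product of outer functions, hence outer.

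The step most in need of care is the claim that a bounded-type function with unimodular boundary values is a quotient of inner functions, together with the bookkeeping of inner factors through \eqref{eq:NFT}; in particular one must verify that $\psi_1/J$ is again inner (it is an $H^\infty$ function with unimodular boundary values) so that ``$J$ divides $\psi_1$'' is legitimate. I would also dispose of degenerate cases, such as $f$ a real constant, where $\psi_1-\psi_2$ reduces to a nonzero constant, and confirm that $f+i$ and $\psi_1-\psi_2$ do not vanish identically, so that every quotient above is well defined.
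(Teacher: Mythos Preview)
Your proof is correct and follows essentially the same approach as the paper: both use the Cayley transform to convert $f$ into a unimodular function of bounded type, express it as a quotient of coprime inner functions via \eqref{eq:NFT}, and then for (b) exploit that any common inner divisor of $\psi_1\pm\psi_2$ divides both $\psi_1$ and $\psi_2$. Your treatment is slightly more detailed in places (the explicit argument that the outer factor of a unimodular $\mathfrak{N}$ function is constant, the uniqueness argument), and your proof of (c) via $\psi_1+\psi_2=-if(\psi_1-\psi_2)$ is marginally more direct than the paper's ``proceeding as in (b),'' but the substance is the same.
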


    \begin{proof}
        (a) Suppose that $f \in \RR$.  Observe that the linear fractional transformation 
        $$z \mapsto \frac{z - i}{z + i}$$ maps $\R$ to $\T \setminus \{1\}$. 
         It follows that $$\frac{f-i}{f+i} \in \mathfrak{N}$$ and this function is
        unimodular a.e.~on $\T$.  Then \eqref{eq:NFT} ensures that
        there are relatively prime inner functions $\psi_1, \psi_2$, determined up 
        to a common unimodular constant factor, so that 
        \begin{equation*}
            \frac{f-i}{f+i} = \frac{\psi_2}{\psi_1}, \quad \psi_1 \not = \psi_2.
        \end{equation*}
        After a little algebra, we obtain \eqref{eq:Helson}.
          
        \noindent(b) Since $\RR ^+ \subseteq \RR$, (a) says that each $f \in \RR ^+$ enjoys
        a representation of the form \eqref{eq:Helson}, in which $\psi_1$ and $\psi_2$ are relatively prime inner functions.
        Suppose that $u$ is an inner factor of the denominator $\psi_1 - \psi_2$.  Then, since $f \in \RR ^+$,
        $u$ must also be an inner factor of the numerator $\psi_1+ \psi_2$. This means that $u$ is a common inner factor of both $\psi_1$ and $\psi_2$ (i.e., $u$ must be a unimodular constant).
        We conclude that the function $\psi_1 - \psi_2$ has no non-constant inner factor, and is thus outer.
        
        \noindent(c) Proceeding as in (b), we see that $\psi_1 + \psi_2$ is also outer.  Thus
        $$\psi_1^2 -\psi_2^2 = (\psi_1 +\psi_2)(\psi_1 -\psi_2)$$ is outer as well.
    \end{proof}
    
 Observe that the converses of statements (a), (b), and (c) trivially hold. 
 
\begin{Example}
Let 
\begin{equation*}
\psi_1 = \frac{z+ \frac{1}{2}}{1 + \frac{1}{2}z}
\quad\text{and}\quad
\psi_1 = \frac{z- \frac{1}{2}}{1 - \frac{1}{2}z}.
\end{equation*}
Then a short computation confirms that
\begin{equation}\label{eq:JavadFunction}
f = i \frac{ \psi_1 + \psi_2}{ \psi_1 - \psi_2} = \frac{3iz}{2 - 2z^2}.
\end{equation}
If $z = e^{i \theta}$, then
\begin{equation*}
f(e^{i\theta}) = -\frac{3}{4} \csc \theta,
\end{equation*}
so $f$ maps $\D$ onto the complement of the rays $(-\infty,-\frac{3}{4}]$ and $[\frac{3}{4}, +\infty)$.
This is illustrated in Figure \ref{Figure:Javad}.
\end{Example}

\begin{figure}
    \begin{tabular}{cc}
        \includegraphics[width=0.45\textwidth]{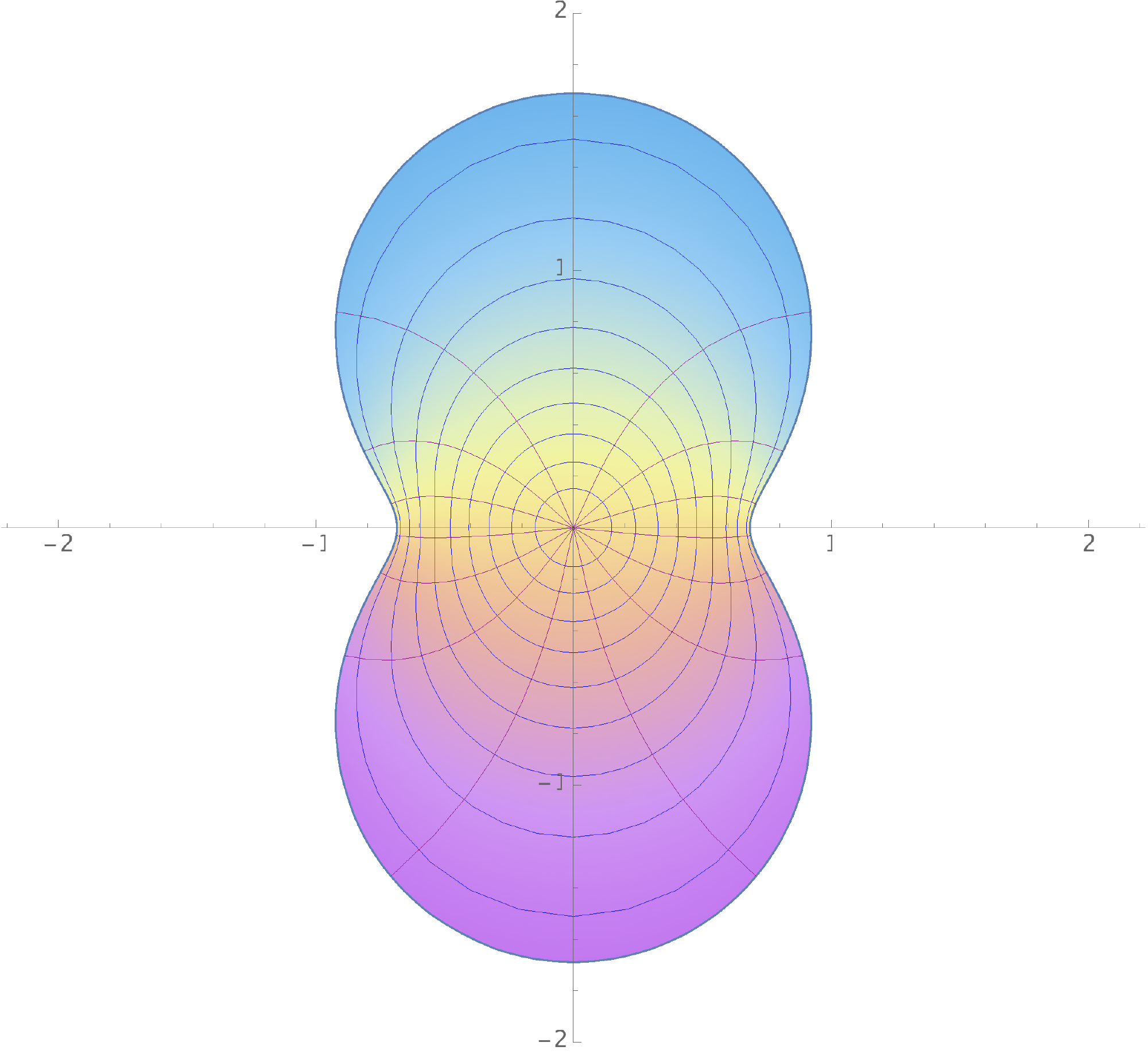} 
        & \includegraphics[width=0.45\textwidth]{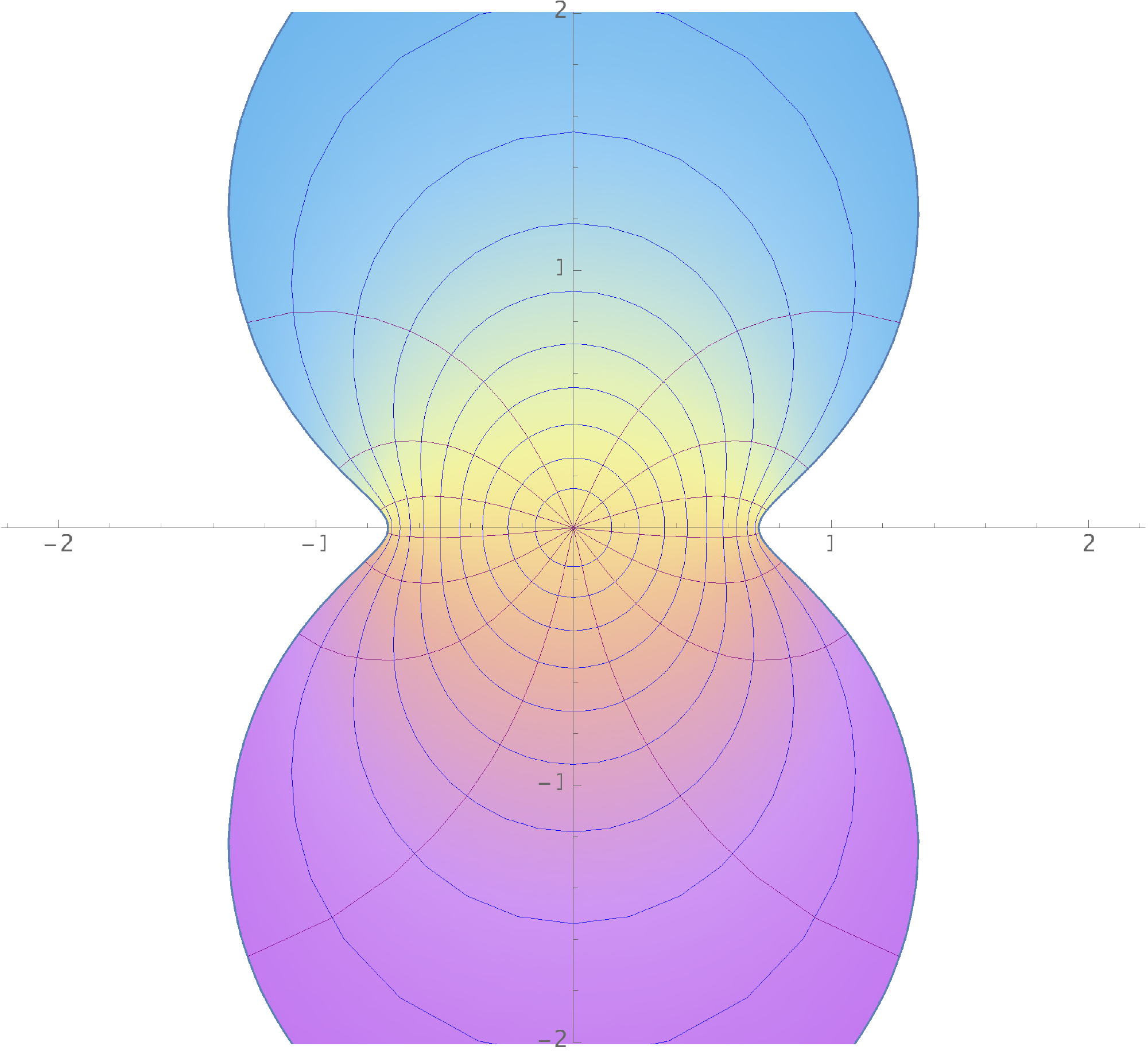} \\
        $r=0.65$ & $r = 0.75$\\[8pt]
         \includegraphics[width=0.45\textwidth]{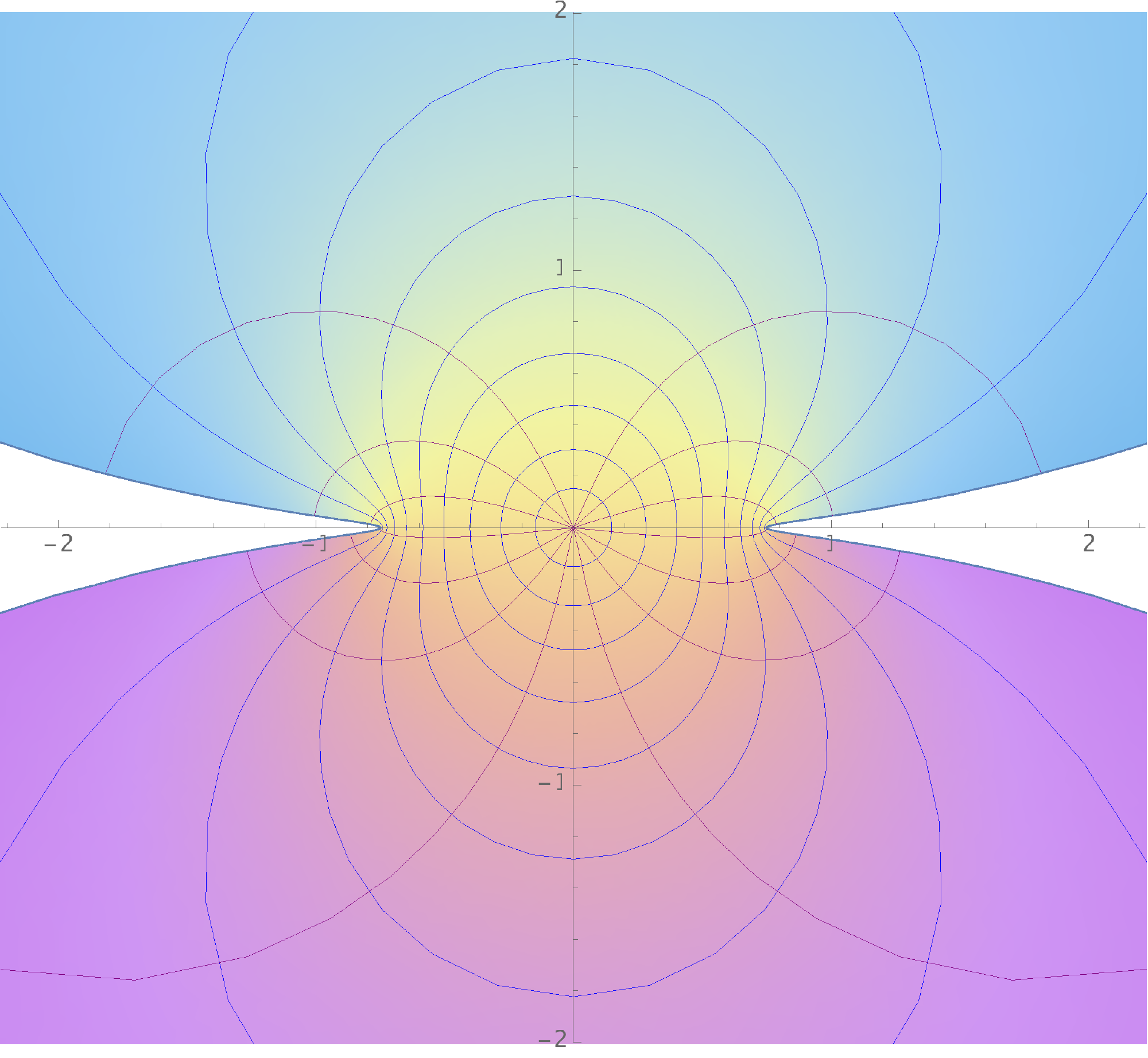} 
        &\includegraphics[width=0.45\textwidth]{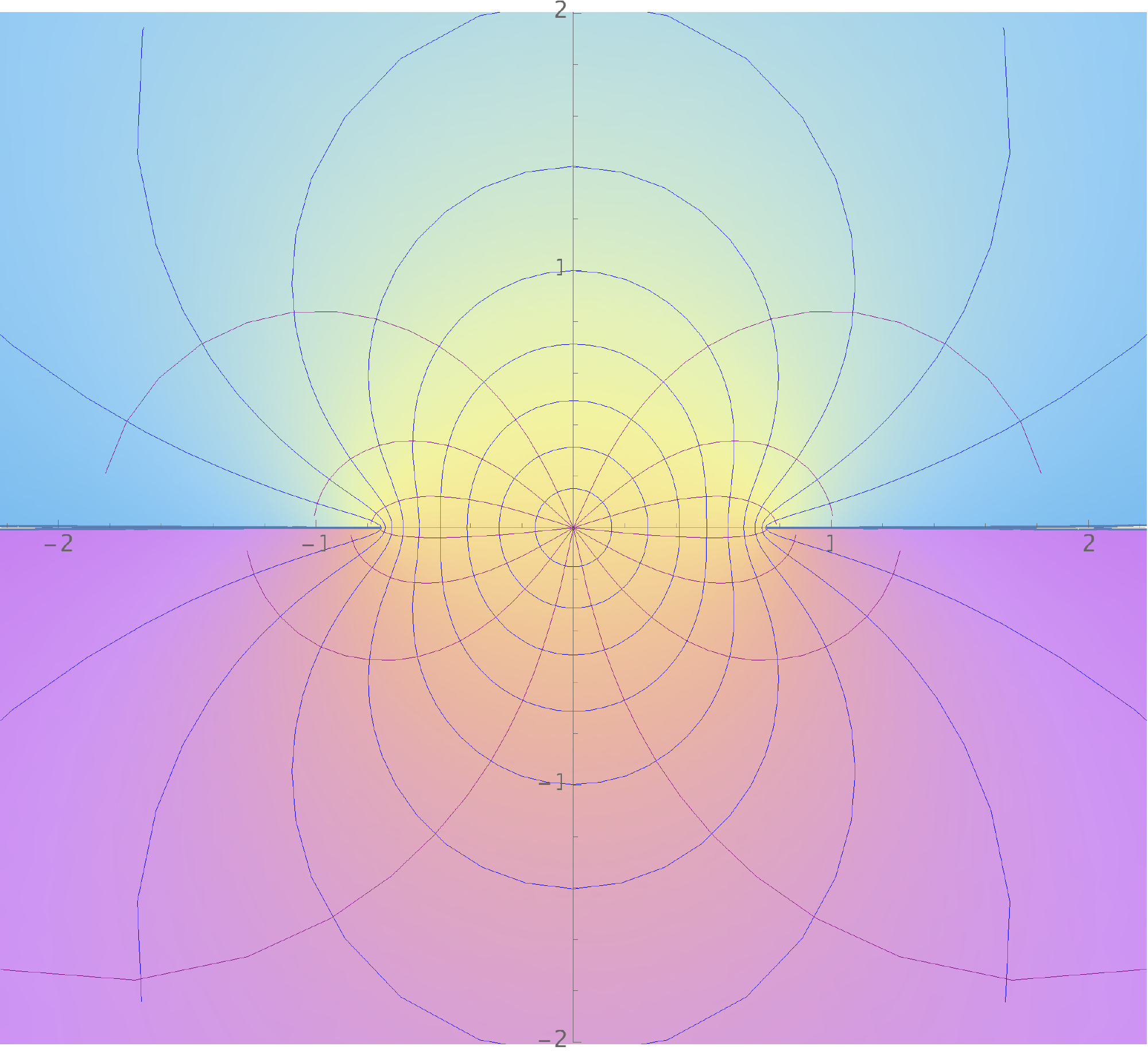} \\
        $r=0.95$ & $r = 0.999$
    \end{tabular}
    \caption{Images of the disk $|z| \leq r$ under the function \eqref{eq:JavadFunction} for several values of $r$.}
    \label{Figure:Javad}
\end{figure}

\subsection{Koebe inner functions}\label{Subsection:Koebe}    
    We ultimately seek to replace
    the Helson representation (Theorem \ref{Theorem:Helson}) with a more practical
    description of real Smirnov functions.  The first step is to reduce
    the consideration of functions in $\RR^+$ to the study of real outer functions (i.e., $\RO$).  To this end, we require the following definition.
    
    \begin{Definition}
        A \emph{Koebe inner function} is a function of the form
            $K(\phi)$,
        where $\phi$ is an inner function,
        $$K(z) = -4 k(z),$$ and
        \begin{equation}\label{eq:Koebe}
            k(z) = \frac{z}{(1 - z)^2}
        \end{equation}
        is the \emph{Koebe function}.
    \end{Definition}
    
    Recall that the Koebe function is a univalent map from $\D$ onto the complement of the half line
    $(-\infty,-\frac{1}{4}]$ in $\C$ \cite{DurenUnivalent}.  Thus $K$ maps $\D$ onto the 
    complement of the half line $[1,\infty)$ in $\C$.  In particular, $K \geq 1$
    a.e.~on $\T$ (Figures \ref{Figure:Koebe} and \ref{Figure:KoebeSingular}).  The following theorem provides
    an analogue of the canonical inner-outer factorization that is more suitable for real Smirnov functions.

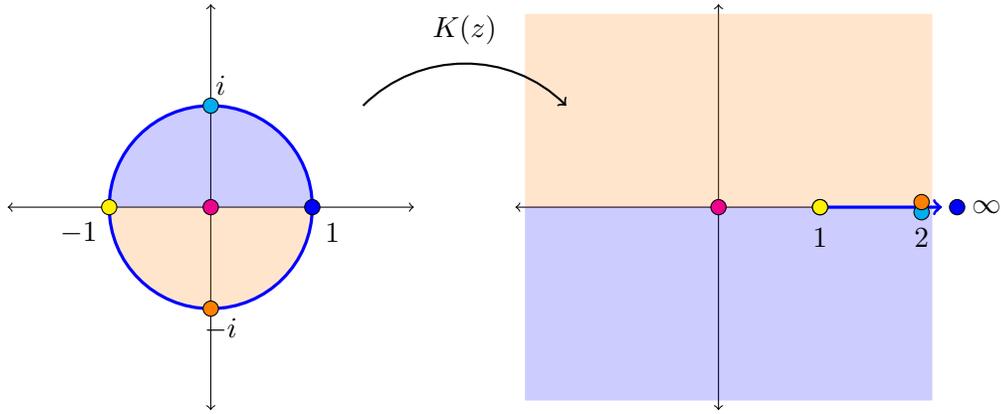
\begin{figure}
\centering
\begin{tikzpicture}[scale=1.35]
    \begin{scope}[xshift = -2.5cm]
        \draw[<->](-2,0)--(2,0);
        \draw[<->](0,-2)--(0,2);
    
        \draw[blue,fill=blue,opacity=.2](1,0)arc(0:180:1cm);
        \draw[orange,fill=orange,opacity=.2](1,0)arc(0:-180:1cm);
        \draw[very thick,blue](0,0) circle (1cm);
        \draw[black,fill=magenta] (0,0) circle (.075cm);
        \draw[black,fill=blue](1,0) circle (.075cm);
        \draw[black,fill=yellow] (-1,0)circle (.075cm);
        \draw[black,fill=cyan] (0,1) circle (.075cm);
        \draw[black,fill=orange] (0,-1) circle (.075cm);
        \node at (.1,1.2){$i$};
        \node at (.1,-1.2){$-i$};
        \node at (1.2,-.25){$1$};
        \node at (-1.3,-.25){$-1$};
    \end{scope}
    
    \node at (0,1.75) {$K(z)$};
    \begin{scope}[xshift = 2.5cm]
        \draw[<->](-2,0)--(2.2,0);
        \draw[<->](0,-2)--(0,2);
        \draw[orange,fill=orange,opacity=.2](-1.9,0) rectangle (2.1,1.9);
        \draw[blue,fill=blue,opacity=.2](-1.9,0)rectangle (2.1,-1.9);

        \draw[black,fill=magenta] (0,0) circle (.075cm);
        \draw[very thick,blue,->](1,0)--(2.2,0);

        \draw[black,fill=blue](1,0) circle (.075cm);
        \draw[black,fill=yellow] (1,0)circle (.075cm);
        \draw[black,fill=cyan] (2,-0.05) circle (.075cm);
        \draw[black,fill=orange] (2,0.05) circle (.075cm);
     \draw[black,fill=blue] (2.35,0)node[right]{\color{black}\,$\infty$} circle (.075cm);
       
        \node at (1,-.3){$1$};
        \node at (2,-.3){$2$};
    \end{scope}
    
    \draw[thick,->](-1,1)to[out=45,in = 135] (1,1);
\end{tikzpicture}
\caption{Illustration of the function $K(z) = -4k(z)$.  It is a univalent map from $\D$
onto the complement of the half line $[1,\infty)$ on the real axis.}
\label{Figure:Koebe}
\end{figure}

\begin{figure}
    \begin{tabular}{cc}
        \includegraphics[width=0.45\textwidth]{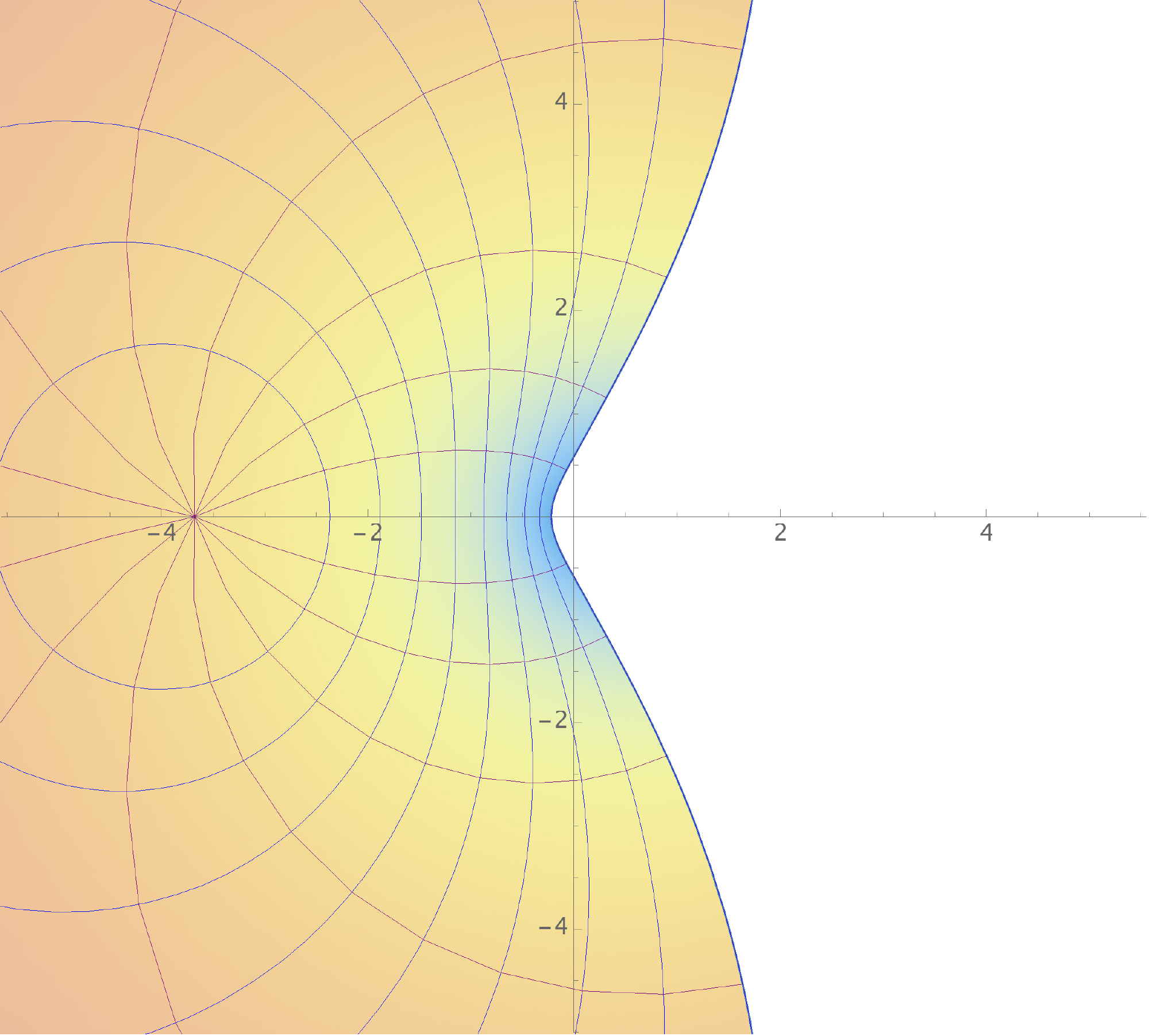} 
        & \includegraphics[width=0.45\textwidth]{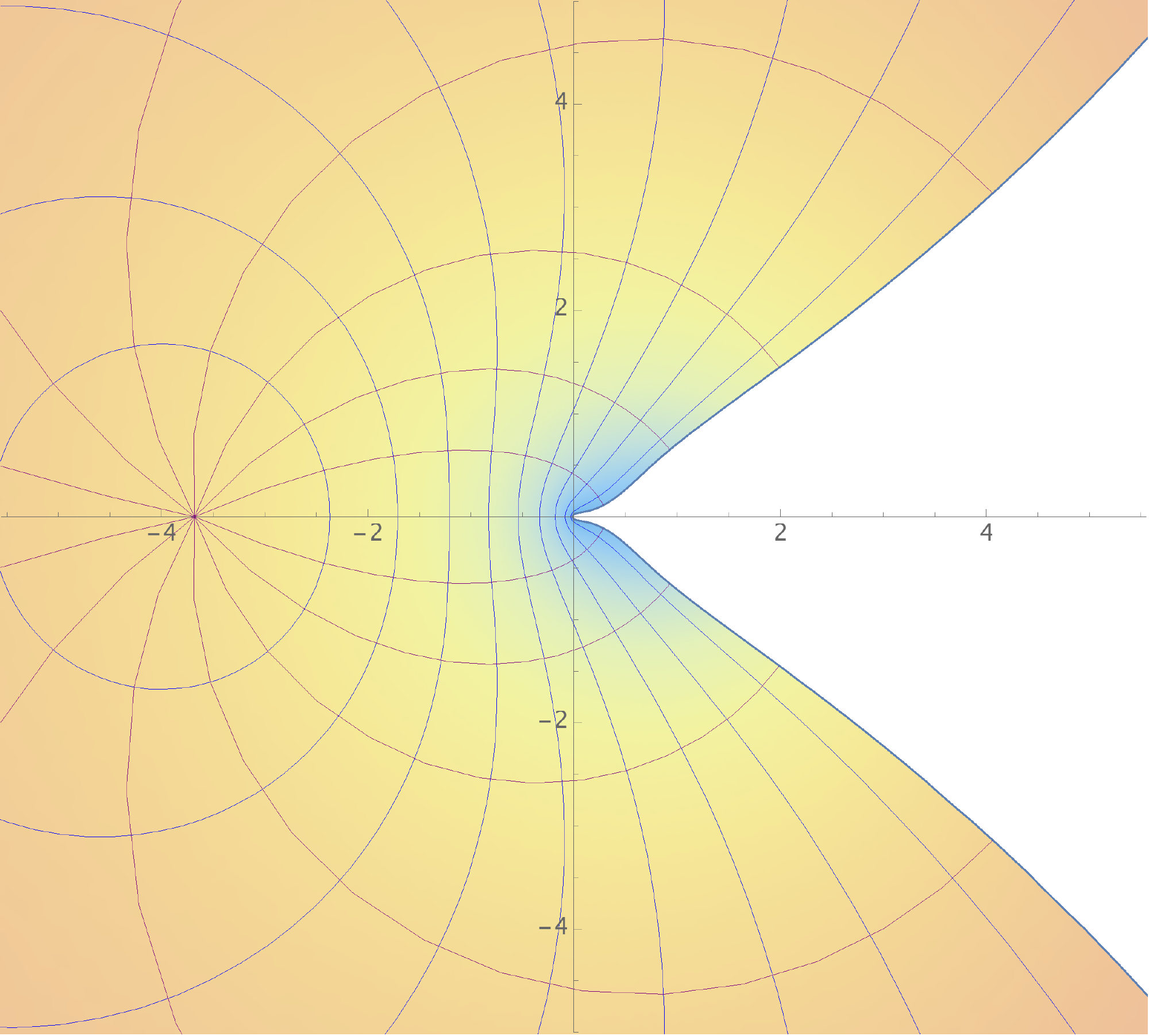} \\
        $r=0.5$ & $r = 0.7$\\[8pt]
         \includegraphics[width=0.45\textwidth]{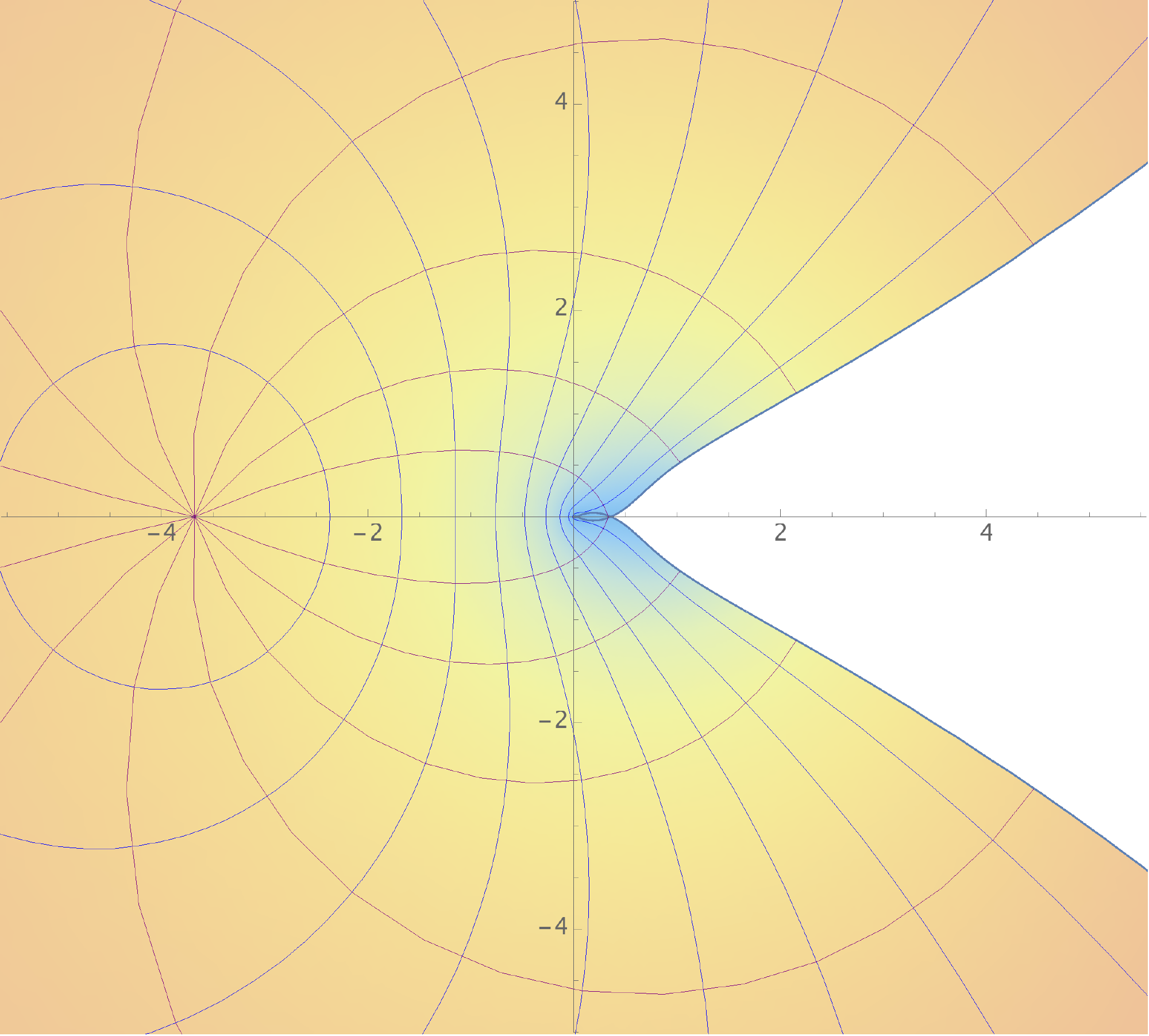} 
        &\includegraphics[width=0.45\textwidth]{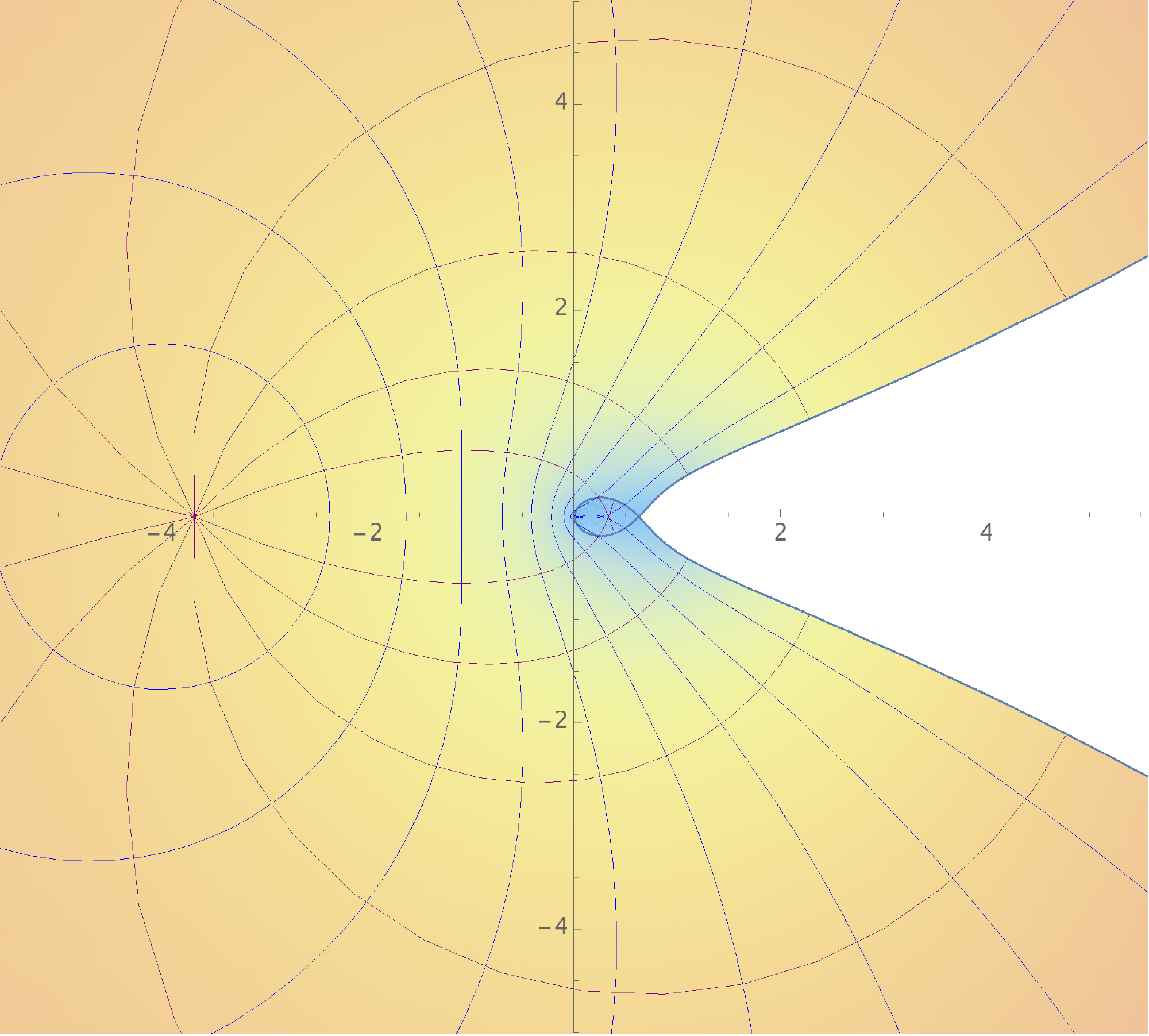} \\
        $r=0.75$ & $r = 0.800$
    \end{tabular}
    \caption{Images of the disk $|z| \leq r$ under the Koebe inner function function $K(\phi)$,
    where $\phi(z) = \exp[ (z+1)/(z-1)]$, for several values of $r \in (0, 1)$.  The image of $\D$ under $K(\phi)$
    is the complement of $\{0\}\cup [1,\infty)$.  This sequence of images suggests how $K(\phi)$ ``wraps'' $\D$ around $0$
    with infinite multiplicity.  The function is univalent on $|z| \leq r$ for small $r$; as $r \to 1^-$, the function $K(\phi)$
    pulls $\D$ through the gap $(0,1)$ and begins wrapping it around $0$.}
    \label{Figure:KoebeSingular}
\end{figure}

    \begin{Theorem}\label{Theorem:Factor}
        If $f \in \RR^+$ (resp., $\RR^p$), then 
        $f = K_f R_f$, where $K_f$ is Koebe inner and $R_f \in \RO$ (resp., $\RR^p$).  Moreover,
        \begin{enumerate}
            \item $|R_f| \leq |f|$ a.e.~on $\T$;
            \item $f$ and $R_f$ have the same sign a.e.~on $\T$.
        \end{enumerate}
    \end{Theorem}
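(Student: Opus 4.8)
The plan is to use the canonical inner–outer factorization of $f$ and let its inner part drive the Koebe factor. Write $f = IF$, where $I$ is inner and $F$ is outer, as guaranteed by the $N^+$ factorization. The key idea is to take $\phi = I$ and set $K_f := K(I) = -4I/(1-I)^2$, which is Koebe inner by definition, and then define $R_f := f/K_f$. A direct cancellation of the inner part gives
\[
R_f = \frac{IF}{-4I/(1-I)^2} = -\tfrac14 (1-I)^2 F ,
\]
which is manifestly analytic on $\D$ (the zeros of $I$ cancel).

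First I would check that $R_f$ is outer. Since $I$ is inner, $-I$ maps $\D$ into $\D$, so $1 - I = 1 + (-I)$ is outer by the criterion for outer functions recorded just after \eqref{bdryouter}; hence $(1-I)^2$ is outer, and its product with the outer function $F$ is again outer. To conclude $R_f \in \RO$ it remains to verify that $R_f$ is real a.e.\ on $\T$. For this I would invoke the boundary behavior of the Koebe factor: because $I$ has unimodular boundary values a.e.\ and $K \geq 1$ on $\T$, we obtain $K_f = K(I) \geq 1$ a.e.\ on $\T$, so $K_f$ is real, strictly positive, and nonvanishing there. (That $K(I)$ equals $K$ evaluated at the boundary value of $I$ a.e.\ uses $I \neq 1$ a.e., which is exactly the statement that the outer function $1-I$ is nonzero a.e.) Since $f$ is real a.e.\ and $K_f \geq 1 > 0$, the quotient $R_f = f/K_f$ is real a.e., giving $R_f \in \RO$.

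Conditions (a) and (b) then fall out of $K_f \geq 1$: we have $|R_f| = |f|/K_f \leq |f|$ a.e., and $\operatorname{sgn} R_f = \operatorname{sgn} f$ a.e.\ because $K_f > 0$. For the parenthetical $\RR^p$ statement, I would argue that if $f \in \RR^p = \RR^+ \cap H^p$, then $|R_f| \leq |f| \in L^p$ forces $R_f \in L^p$; since $R_f \in N^+$, Smirnov's theorem \eqref{SmirnovThm} upgrades this to $R_f \in H^p$, whence $R_f \in \RR^+ \cap H^p = \RR^p$.

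The only genuine subtlety is the degenerate case in which $I$ is a unimodular constant, i.e.\ $f$ is itself outer; then the formula for $K(I)$ may be meaningless (when $I \equiv 1$). Here I would note that $1$ is already a Koebe inner function, since $K(-1) = 1$, and simply take $K_f = 1$ and $R_f = f \in \RO$. I expect the main work to lie not in the algebra but in the boundary analysis of the second paragraph: pinning down that $K(I)$ is real and $\geq 1$ a.e.\ on $\T$ is what simultaneously forces $R_f$ to be real and yields both (a) and (b).
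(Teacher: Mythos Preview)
Your proof is correct and follows essentially the same route as the paper: factor $f = IF$, set $K_f = K(I)$ and $R_f = -\tfrac14(1-I)^2 F$, use that $1-I$ is outer to get $R_f$ outer, and read off reality, (a), and (b) from $K_f \geq 1$ a.e.\ on $\T$. Your treatment of the degenerate case via $K(-1)=1$ and your explicit appeal to Smirnov's theorem for the $\RR^p$ assertion differ only cosmetically from the paper's version (which instead replaces $I_f\equiv 1$ by $I_f=i$, $F$ by $-iF$).
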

    
    \begin{proof}
        Let $f \in \RR^+$ (resp., $\RR^p$) with inner factor $I_f$ and outer factor $F$.  Without loss of generality, we assume that $I_f \neq 1$. Otherwise, we may take $I_f=i$ and replace $F$ by $-iF$ (see the comment after the proof). Then
        \begin{equation*}
            f = \frac{-4 I_f}{(1 - I_f)^2} \cdot \frac{(1 - I_f)^2}{-4} F = K_f R_f,
        \end{equation*}
        where $K_f = -4k(I_f)$ is Koebe inner and $$R_f = -\frac{1}{4}(1 - I_f)^2 F$$ is outer.  
        Since $K_f \geq 1$ a.e.~on $\T$, the outer function $R_f = f/K_f$ is real a.e.~on $\T$, so it
        belongs to $\RO$.  Moreover, $|R_f| = |f/K_f|  \leq |f|$
        a.e.~on $\T$, so $R_f \in \RR^p$ whenever $f \in H^p$.  Since $f/R_f = K_f \geq 1$ a.e.~on $\T$,
        the functions $f$ and $R_f$ have the same sign a.e.~on $\T$.
    \end{proof}
    
    In what sense is the factorization $f = K_f R_f$ in Theorem \ref{Theorem:Factor} unique?
    Observe that the inner factor of $f$ is hidden in $K_{f}$ (they have the same inner factor up to unimodular constants). Modulo this constant, $K_{f}$, and hence $R_f$, is unique. 

\subsection{Growth restrictions}\label{Subsection:Growth}    
    
    The following theorem tells us that $\RR ^p$ is of interest only when $0 < p < 1$. 
    
    \begin{Theorem}\label{Theorem:p1}
        If $p \geqslant 1$, then $\RR ^{p} = \R$.  
    \end{Theorem}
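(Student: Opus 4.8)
The plan is to exploit the fact that for $p \geq 1$ the boundary function is integrable, so its Fourier coefficients are available and analyticity forces them to vanish for negative indices. First I would observe that since the Hardy spaces are nested with $H^p \subseteq H^1$ for $p \geq 1$, any $f \in \RR^p = \RR^+ \cap H^p$ belongs to $H^1$. In particular its boundary function lies in $L^1$ and, by the Fourier description $H^1 = \{g \in L^1 : \widehat{g}(n) = 0,\ \forall n \leq -1\}$ recorded earlier, we have $\widehat{f}(n) = 0$ for all $n \leq -1$.

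Next I would bring in the hypothesis that $f$ is real valued a.e.\ on $\T$. For a real $L^1$ boundary function we have $\overline{f} = f$ a.e., and comparing Fourier coefficients yields the conjugate-symmetry relation $\widehat{f}(n) = \overline{\widehat{f}(-n)}$ for every $n \in \Z$. Combined with the vanishing of $\widehat{f}(n)$ for $n \leq -1$, for each $n \geq 1$ we get $\widehat{f}(n) = \overline{\widehat{f}(-n)} = 0$. Hence $\widehat{f}(n) = 0$ for all $n \neq 0$, so $f$ reduces to the constant $\widehat{f}(0) = \int_{\T} f \, dm$, which is real because $f$ is real a.e. This shows $\RR^p \subseteq \R$, and the reverse inclusion $\R \subseteq \RR^p$ is immediate.

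The argument has essentially no hard step; the only thing to watch is where the assumption $p \geq 1$ is genuinely used. It enters precisely in guaranteeing $f \in L^1$, so that the Fourier coefficients are defined and the one-sided (analytic) spectrum condition applies. For $0 < p < 1$ this breaks down, since $H^p$ functions need not be integrable on $\T$, which is exactly why $\RR^p$ is interesting only in that range and why the theorem is stated for $p \geq 1$.
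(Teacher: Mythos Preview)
Your argument is correct. The paper's proof is also brief but takes a slightly different route: it passes to $g = if$, which has $\Re g = 0$ a.e.\ on $\T$, and then invokes the Herglotz representation
\[
g(z) = i\gamma + \int_{\T} \frac{\zeta+z}{\zeta-z}\,\Re g(\zeta)\,dm(\zeta)
\]
valid for $g \in H^1$, to conclude that $g$ (hence $f$) is constant. Your approach instead stays on the Fourier side, using the characterization $H^1 = \{g \in L^1 : \widehat{g}(n)=0,\ n\leq -1\}$ together with the conjugate symmetry $\widehat{f}(n)=\overline{\widehat{f}(-n)}$ for real $f$. Both arguments are essentially the same phenomenon viewed from two standard angles; yours avoids citing the Herglotz formula, while the paper's avoids the uniqueness theorem for Fourier coefficients. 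Neither has any real advantage over the other.
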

    
    \begin{proof}
        When $1 \leqslant p < \infty$, we have $\RR ^p \subseteq \RR ^1$ and so it suffices to prove $\RR ^1 = \R$. 
        If $f \in \RR^1$ then $f \in H^1$ and so $g = if$ can be recovered from the
        analytic completion of its Poisson integral \cite[Thm.~3.1 \& p.~4]{Duren}, i.e., 
        \begin{equation*}
            g(z) = i\gamma + \int_{\T} \frac{\zeta + z}{\zeta - z} \Re g(\zeta)\,dm(\zeta)
        \end{equation*}
        for some real constant $\gamma$.
        The integral in the above expression is identically zero since $\Re g = 0$. Thus $f = \gamma$ is a constant function.
    \end{proof}
    
\begin{Example}
There are many examples of functions in $\O(\D)$
that have real non-tangential boundary values a.e.~on $\T$, but which
do not belong to $\N$ (that is, they are not of bounded type).
Indeed, if $f \in \RR^+$ (and non-constant), then  $e^f$ also has real (non-tangential) boundary values a.e.~on $\T$.  
However,  $e^f \not \in \RR^+$ since otherwise, Riesz's theorem (log integrability of Nevanlinna functions on $\T$) would imply that
$$\log|e^f| = \Re f  = f \in L^1$$ and hence, by Smirnov's theorem \eqref{SmirnovThm}, $f \in H^1$. 
Theorem \ref{Theorem:p1} now ensures that $f$ is constant, a contradiction.
As an amusing consequence, we note that if $f \in \RR^+$ is non-constant,
then 
$$\frac{e^f - i}{e^f + i}$$ is meromorphic on $\D$, unimodular a.e.~on $\T$, but
\emph{not} expressible as the quotient of two inner functions.
\end{Example}

    As mentioned earlier, if $\phi$ is inner, then $1 - \phi$ is outer.
    As a result, the inner factor of $K(\phi)$ is $\phi$.  
    Since the function $z \mapsto (1-z)^{-1}$ belongs to $H^p$ for $0 < p < 1$ \cite[p. 13]{Duren},
    the Littlewood Subordination Principle \cite[p.10]{Duren} ensures
    that
    \begin{equation}\label{cvbnmdfdfi} 
    (1- \phi)^{-1} \in H^p \quad \forall p \in (0, 1).
    \end{equation} Thus, a non-constant Koebe inner function
    belongs to $\RR^p$ for all $0 < p < \frac{1}{2}$.  This exponent is sharp, as we will see in a moment. The following result is due to Helson and Sarason \cite{MR0236989}
    and to Neuwirth and Newman \cite{MR0213576}.
    
    \begin{Theorem}\label{Theorem:HelsonSarason}
        If $f \in \RR ^{\frac{1}{2}}$ and $f \geqslant 0$ a.e.~on $\T$, then $f$ is constant.
    \end{Theorem}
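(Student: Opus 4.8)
The plan is to peel off the inner part of $f$ using Theorem \ref{Theorem:Factor}, reduce to a nonnegative real \emph{outer} function, extract an honest analytic square root, and then invoke Theorem \ref{Theorem:p1}; the only thing that survives the reduction is a single Koebe inner factor, which I will rule out by a direct boundary computation. Assume $f \not\equiv 0$ (otherwise there is nothing to prove) and write $f = K_f R_f$ as in Theorem \ref{Theorem:Factor}, where $K_f = -4k(I_f)$ is Koebe inner, $I_f$ is the inner factor of $f$, and $R_f \in \RR^{1/2} \cap \RO$. Since $K_f \geq 1 > 0$ a.e.~on $\T$ and $f \geq 0$, part (b) of that theorem (the sign statement) forces $R_f \geq 0$ a.e.~on $\T$.

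Next I would take the square root of $R_f$. Because $R_f$ is outer it is zero-free on $\D$ and has an outer square root $g$, namely $g(z) = e^{i\gamma/2}\exp(\tfrac12\int_\T \tfrac{\zeta+z}{\zeta-z}\log|R_f|\,dm)$ if $R_f = e^{i\gamma}\exp(\int_\T\tfrac{\zeta+z}{\zeta-z}\log|R_f|\,dm)$, so that $g^2 = R_f$. Reading boundary values off \eqref{bdryouter}, the phase of $R_f$ is $\widetilde{w}+\gamma$ with $w=\log|R_f|$, and $R_f \geq 0$ a.e.~says $\widetilde{w}+\gamma \in 2\pi\Z$ a.e.; hence the phase $(\widetilde{w}+\gamma)/2$ of $g$ lies in $\pi\Z$ a.e., so $g = \pm|R_f|^{1/2}$ is real a.e.~on $\T$, i.e.\ $g \in \RR$. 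Moreover $R_f \in H^{1/2}$ gives $\int_\T |g|\,dm = \int_\T |R_f|^{1/2}\,dm < \infty$, and since $g \in N^+$, Smirnov's theorem \eqref{SmirnovThm} yields $g \in H^1$. Thus $g \in \RR^1 = \R$ by Theorem \ref{Theorem:p1}, so $R_f = g^2$ is a constant $c \geq 0$.

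It remains to eliminate $K_f$. Suppose $I_f$ is a nonconstant inner function and set $\phi = I_f$, so $a := \phi(0) \in \D$. On $\T$ we have $|\phi| = 1$, hence $|K_f|^{1/2} = 2|1-\phi|^{-1}$ a.e., so $f = cK_f \in H^{1/2}$ would be equivalent (for $c>0$) to $\int_\T |1-\phi|^{-1}\,dm < \infty$. To show this diverges I would use that an inner function pushes $m$ forward to the harmonic measure $\omega_a$ at $a$: for nonnegative Borel $G$ on $\T$ one has $\int_\T G(\phi)\,dm = \int_\T G\,d\omega_a$ (apply the mean value property to $(\mathscr{P}G)\circ\phi$ and extend by monotone convergence), where $d\omega_a = P_a\,dm$ and $P_a \geq (1-|a|)/(1+|a|) =: \delta > 0$. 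Taking $G(w) = |1-w|^{-1}$ gives $\int_\T |1-\phi|^{-1}\,dm \geq \delta \int_\T |1-w|^{-1}\,dm = +\infty$, because $|1-e^{i\theta}| = 2|\sin(\theta/2)|$ makes the last integral diverge. This contradicts $f \in H^{1/2}$, so $I_f$ is a unimodular constant, $K_f$ is constant, and $f = cK_f$ is constant.

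The main obstacle is exactly the divergence of $\int_\T |1-\phi|^{-1}\,dm$: once the harmonic-measure change of variables is in place it is routine, but it is the one genuinely non-formal step, and it is where the sharp exponent $\tfrac12$ enters. Indeed the same computation with $G(w)=|1-w|^{-2p}$ gives $\int_\T |1-\phi|^{-2p}\,dm < \infty$ precisely when $p < \tfrac12$, which matches the earlier observation that a nonconstant Koebe inner function lies in $\RR^p$ for every $p<\tfrac12$ but not at the endpoint.
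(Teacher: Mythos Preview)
Your proof is correct, and the first two reductions---factoring $f = K_f R_f$ via Theorem~\ref{Theorem:Factor} and then taking an outer square root of $R_f$ to land in $\RR^1$---match the paper exactly. The genuine divergence is in how you dispose of the Koebe inner factor. You show directly that $\int_\T |1-\phi|^{-1}\,dm = \infty$ by pushing $m$ forward under $\phi$ to the harmonic measure $\omega_{\phi(0)}$ and bounding $P_{\phi(0)}$ below by a positive constant; this brings in an outside tool (the distribution of an inner function on $\T$) but has the advantage, as you observe, of transparently identifying $\tfrac12$ as the critical exponent. The paper instead stays within its own framework: from the algebraic identity
\[
\left( i\,\frac{1+\phi}{1-\phi}\right)^2 = K(\phi) - 1,
\]
one sees that $K(\phi)\in H^{1/2}$ forces $i(1+\phi)/(1-\phi)\in \RR^1$, so a second application of Theorem~\ref{Theorem:p1} makes $\phi$ constant. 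The paper's route is slicker and uses nothing beyond Theorem~\ref{Theorem:p1}; yours is more quantitative and perhaps more illuminating about why the threshold sits at $\tfrac12$.
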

    
    \begin{proof}
        Suppose that $f \in \RR ^{\frac{1}{2}}$ and $f \geqslant 0$ a.e.~on $\T$.
        Theorem \ref{Theorem:Factor} says that $f = K_f R_f$, in which $K_f$ is Koebe inner
        and $R_f \in \RR^{\frac{1}{2}}$ is outer and non-negative a.e.~on $\T$.
        Then $R_f^{\frac{1}{2}}$ is outer and belongs to $\RR^1$, so it is constant (Theorem \ref{Theorem:p1}).  Consequently, we may assume that $f = K(\phi) \in \RR^{\frac{1}{2}}$ is a Koebe inner
        function and thus is non-negative a.e.~on $\T$.  Observe that 
        \begin{equation*}
            \left(i \frac{1+\phi}{1-\phi} \right)^2   =  \frac{-4\phi - (1-\phi)^2}{(1-\phi)^2} =  K(\phi) - 1 \in \RR^{\frac{1}{2}}
        \end{equation*}
        is non-negative a.e.~on $\T$.  Consequently, $i(1+\phi)/(1-\phi)$ belongs to $\RR^1$
        so it is constant (Theorem \ref{Theorem:p1}).  Thus $\phi$ is constant as well. 
    \end{proof}
    
    The proof above yields this interesting corollary.
    
    \begin{Corollary}
    Any Koebe inner function belonging to $\RR^{\frac{1}{2}}$ must be constant. 
    \end{Corollary}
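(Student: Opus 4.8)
The plan is to recognize this corollary as an immediate specialization of Theorem \ref{Theorem:HelsonSarason}. The one fact I need to supply is that every Koebe inner function is non-negative a.e.~on $\T$. This was essentially already observed when the Koebe function was introduced: since $K = -4k$ maps $\D$ univalently onto the complement of $[1,\infty)$, a direct boundary computation gives $K(\zeta) \geq 1$ for $\zeta \in \T$ (explicitly, $K(e^{i\theta}) = \csc^2(\theta/2) \geq 1$). If $\phi$ is inner, then its radial boundary values $\phi(\xi)$ are unimodular for a.e.~$\xi \in \T$, and composing yields $K(\phi)(\xi) = K(\phi(\xi)) \geq 1 \geq 0$ for a.e.~$\xi \in \T$.

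With this in hand, suppose $K(\phi) \in \RR^{\frac{1}{2}}$. Then $K(\phi)$ is a real $H^{1/2}$ function that is non-negative a.e.~on $\T$, so Theorem \ref{Theorem:HelsonSarason} applies \emph{verbatim} and forces $K(\phi)$ to be constant. This is the whole argument.

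Alternatively, and perhaps more in the spirit of the remark that ``the proof above yields this corollary,'' I would simply isolate the final portion of the proof of Theorem \ref{Theorem:HelsonSarason}. The algebraic identity
\begin{equation*}
\left(i \frac{1+\phi}{1-\phi}\right)^2 = K(\phi) - 1
\end{equation*}
exhibits $K(\phi) - 1$ as a perfect square that lies in $\RR^{\frac{1}{2}}$ and is non-negative a.e.~on $\T$; taking the square root shows $i(1+\phi)/(1-\phi) \in \RR^{1}$, which is $\R$ by Theorem \ref{Theorem:p1}, and hence $\phi$ (and therefore $K(\phi)$) is constant. I do not anticipate any real obstacle here, as both routes reduce to facts already proved; the only point meriting a word of care is the composition of the boundary inequality $K \geq 1$ with the inner function $\phi$, which is routine given that inner functions have unimodular radial limits almost everywhere.
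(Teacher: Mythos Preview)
Your proposal is correct and matches the paper's approach: the paper simply remarks that ``the proof above yields this interesting corollary,'' meaning exactly the argument you isolate in your second paragraph---the identity $\big(i(1+\phi)/(1-\phi)\big)^2 = K(\phi)-1$ forces $i(1+\phi)/(1-\phi)\in\RR^1=\R$, hence $\phi$ is constant. Your first route (observing $K(\phi)\geq 1$ a.e.\ and invoking Theorem~\ref{Theorem:HelsonSarason} directly) is an equally valid packaging of the same content.
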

    
    If $f \in \RR^+$ is non-negative a.e.~on $\T$, it is not necessarily the case that $f$ is the square of a function
    in $\RR^+$.  For instance, $-4$ times the Koebe function \eqref{eq:Koebe} is a counterexample: It has a zero of order $1$ at $z= 0$ and hence cannot be the square of any analytic function.
    On the other hand, the following theorem asserts that a real Smirnov function that is non-negative a.e.~on $\T$
    is the sum of {\em two} squares of real outer functions.
    
    \begin{Theorem}
        If $f \in \RR ^{+}$ (resp., $\RR^p$) and $f \geqslant 0$ a.e.~on $\T$, then 
        $f = g_{1}^{2} + g_{2}^{2}$, where $g_1, g_2 \in \RO$ (resp., $\RO \cap H^{\frac{p}{2}}$). 
    \end{Theorem}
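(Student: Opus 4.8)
The plan is to combine the Koebe factorization of Theorem~\ref{Theorem:Factor} with the algebraic identity already isolated in the proof of Theorem~\ref{Theorem:HelsonSarason}; the only genuinely new ingredient is a ``real outer square root'' for non-negative real outer functions, which I expect to be the crux.

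First I would apply Theorem~\ref{Theorem:Factor} to write $f = K_f R_f$, where $K_f = K(\phi) = -4k(\phi)$ is Koebe inner with $\phi = I_f$ the inner factor of $f$, and $R_f \in \RO$. Since $f \geq 0$ a.e.\ on $\T$ and $f$ and $R_f$ share the same sign a.e.\ (Theorem~\ref{Theorem:Factor}), we have $R_f \geq 0$ a.e.\ on $\T$. If $f$ is itself outer the construction degenerates and one finishes directly (see the last paragraph), so assume $\phi$ is non-constant. From the computation in the proof of Theorem~\ref{Theorem:HelsonSarason},
\begin{equation*}
    K_f - 1 = \left(i \frac{1 + \phi}{1 - \phi}\right)^2 ,
\end{equation*}
and I record that $u := i(1+\phi)/(1-\phi)$ is real a.e.\ on $\T$, since $\phi$ is unimodular there and so $(1+\phi)/(1-\phi)$ is purely imaginary a.e.

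The heart of the matter is the claim that a non-negative real outer function $R$ factors as $R = s^2$ with $s \in \RO$; this is the step I expect to require the most care. I would argue as follows. Being outer, $R$ is zero-free on the simply connected domain $\D$, so it has an analytic logarithm $\Log R$ there; put $s = \exp(\tfrac12 \Log R)$, an analytic zero-free function with $s^2 = R$. That $s$ is outer follows from the Herglotz-type representation \eqref{bdryouter}: outerness of $R$ exhibits $\Log R$ with density $\log|R| \in L^1$, so $\tfrac12\Log R$ is the logarithm of an outer function. Finally $s$ is real a.e.\ on $\T$: the boundary values of $R$ are real and non-negative, so $e^{i\Im \Log R} = R/|R| = 1$ a.e., whence $\Im \Log R \in 2\pi\Z$ a.e.\ and $\tfrac12\Im\Log R \in \pi\Z$ a.e.; therefore $s = \pm |R|^{1/2}$ a.e.\ on $\T$, so $s \in \RO$.

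Applying the claim to $R_f$, I would set $g_1 = s$ and $g_2 = u\,s$, and verify
\begin{equation*}
    g_1^2 + g_2^2 = s^2 + u^2 s^2 = \big(1 + (K_f - 1)\big)R_f = K_f R_f = f .
\end{equation*}
Both $g_1$ and $g_2$ are real a.e.\ on $\T$, being products of functions that are real a.e. For outerness, $s$ is outer by the claim, while $u = i(1+\phi)/(1-\phi)$ is outer because $1+\phi$ is outer (it is of the form $1 + g$ with $g = \phi$ and $\phi(\D)\subseteq\D$) and $1-\phi$ is outer; since a quotient or product of outer functions, times a unimodular constant, is again outer, $g_2 = u\,s$ is outer. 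Hence $g_1, g_2 \in \RO$. (In the degenerate outer case one has $f = s^2$ with $s \in \RO$ and may simply take $g_1 = g_2 = s/\sqrt{2} \in \RO$.)

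For the $\RR^p$ refinement, assume in addition $f \in H^p$. Since $f = g_1^2 + g_2^2 \geq 0$ and the $g_i$ are real a.e., we have $g_i^2 \leq f$ a.e.\ on $\T$, hence $|g_i| \leq |f|^{1/2}$ a.e.; therefore
\begin{equation*}
    \int_{\T} |g_i|^{p/2}\,dm \leq \int_{\T} |f|^{p/4}\,dm \leq \left(\int_{\T}|f|^{p}\,dm\right)^{1/4} < \infty ,
\end{equation*}
so $g_i \in L^{p/2}$. As each $g_i$ is outer, and hence lies in $N^+$, Smirnov's theorem \eqref{SmirnovThm} yields $g_i \in H^{p/2}$, so $g_i \in \RO \cap H^{p/2}$, completing the plan.
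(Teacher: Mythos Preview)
Your proof is correct and arrives at essentially the same decomposition as the paper: once you unwind $s = \sqrt{R_f}$ using $R_f = -\tfrac{1}{4}(1-I_f)^2 F$, your pair $(s,\,us)$ coincides (up to sign and order) with the paper's $g_2 = \tfrac{i}{2}(1-I_f)\sqrt{F}$ and $g_1 = \tfrac{1}{2}(1+I_f)\sqrt{F}$. The paper simply writes these down directly---observing via the two Koebe factorizations $f = K(\mp I_f)\cdot[\pm\tfrac{1}{4}(1\pm I_f)^2 F]$ that both squares are non-negative outer---and thereby sidesteps your explicit ``real outer square root'' lemma.
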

    
    \begin{proof} 
        Suppose that $f = I_f F \in \RR^+$ and $f \geqslant 0$ a.e.~on $\T$, where $I_f$ is inner and $F$ is outer.  Then $f$ can be written in two different ways as 
        \begin{equation*}
            f =\underbrace{ \frac{4I_f}{(1 + I_f)^2} }_{K(-I_f)}\cdot \frac{(1 + I_f)^2 F}{4} 
            = \underbrace{ \frac{-4I_f}{(1 - I_f)^2} }_{K(I_f)}\cdot \frac{(1 - I_f)^2 F}{-4}.
        \end{equation*}
       Note that $K(-I_f)$ and $K(I_f)$ are non-negative a.e.~on $\T$.
         It follows that $$\frac{1}{4}(1 + I_f)^2 F \quad \mbox{and} \quad -\frac{1}{4}(1- I_f)^2 F$$
        are both outer and non-negative a.e.~on $\T$.  Moreover, by direct verification, 
        \begin{equation*}
            \big[\tfrac{1}{4}(1 + I_f)^2 F \big]+ \big[- \tfrac{1}{4} (1 - I_f)^2 F\big] = f.
        \end{equation*}
        This expresses $f$ as the sum of two real outer functions that are non-negative a.e.~on $\T$.
        If $f \in H^p$, then $F$ is as well. Thus the two summands above also belong to $H^p$.
        Consequently, 
        $$g_1 = \frac{1}{2}(1 + I_f)\sqrt{F} \quad \mbox{and} \quad g_2 = \frac{i}{2}(1 - I_f)\sqrt{F}$$ are real outer functions
        that satisfy $f = g_1^2 + g_2^2$. They belong to $H^{\frac{p}{2}}$ whenever $f \in H^p$.
    \end{proof}

\subsection{Cayley Inner Functions}\label{Subsection:Cayley}

    Theorem \ref{Theorem:Factor} reduces the study of real Smirnov functions
    to the study of real outer functions.  The simplest real outer functions are, in essence,
    just Cayley transforms of inner functions.  For reasons that will become clear much later, we actually 
    require a certain variant of the Cayley transform that turns out to be
    compatible with infinite products in a crucial way.

    Consider the linear fractional transformation
    \begin{equation}\label{eq:T}
        T(z) = i \frac{1 - iz}{1 + iz},
    \end{equation}
    whose inverse is
    \begin{equation}\label{eq:T'}
        T^{-1}(z) = i \frac{z - i}{z + i}.
    \end{equation}
    One can verify that $T$ satisfies the identities 
    \begin{equation}\label{eq:Tp1}
        T^{-1}(z) = T(1/z) = \frac{1}{T(z)} = -T(-z) = \overline{T(\overline{z})}
    \end{equation}
    and
    \begin{equation}\label{eq:Tp2}
        (T \circ T)(z) = \frac{1}{z} \quad \text{and} \quad (T \circ T \circ T \circ T)(z) = z.
    \end{equation}
    The mapping properties of $T$ and $T^{-1}$ are illustrated in Figure \ref{Figure:T}
    and Table \ref{Table:T} below.
    \begin{table}[h!]
        \begin{equation*}
          \begin{array}{|c||c|c|c|c|c|c|c|c|c|c|}
        \hline
        &	0	&	1	&	-1	&	i	&	-i	&	\infty &	\T 	&\mathbb{R}&\mathbb{L} & (-\infty,0)\\
        \hline
        T(\cdot)	&	i	&	1	&	-1	&	\infty	&	0	&	-i	   &	\mathbb{R}	&\T &	(-\infty,0) &\mathbb{L}\\
        T^{-1}(\cdot)	&	-i	&	1	&	-1	&	0	&	\infty	&	i	   &	\mathbb{R}	&\T &	(-\infty,0) &\mathbb{L}\\
        \hline
          \end{array}
        \end{equation*}
        \caption{Values of the linear fractional transformations \eqref{eq:T} and \eqref{eq:T'}.
        Here $\mathbb{L}$ denotes the open arc of the unit circle $\T$ running counterclockwise from $i$ to $-i$.}
        \label{Table:T}
    \end{table}
    

    \begin{figure}
        \centering
        \begin{tikzpicture}[scale=1.35]
            \begin{scope}[xshift = -2.5cm]
                \draw[<->](-2,0)--(2,0);
                \draw[<->](0,-2)--(0,2);
                \draw[green,fill=green,opacity=.2](-1.9,0) rectangle (1.9,1.9);
                \draw[orange,fill=orange,opacity=.2](-1.9,0) rectangle (1.9,-1.9);
                \draw[blue,fill=blue,opacity=.2](1,0)arc(0:180:1cm);
                \draw[red,fill=red,opacity=.2](1,0)arc(0:-180:1cm);
                \draw [blue!50!green,very thick](0,0)circle(1cm);
                \draw[very thick,magenta,<->](-2,0)--(2,0);
                \draw[black,fill=black] (0,0) circle (.075cm);
                \draw[black,fill=blue](1,0) circle (.075cm);
                \draw[black,fill=purple] (-1,0)circle (.075cm);
                \draw[black,fill=cyan] (0,1) circle (.075cm);
                \draw[black,fill=orange] (0,-1) circle (.075cm);
                \draw[black,fill=green] (0,2.25)node[right]{\color{black}\, $\infty$} circle (.075cm);
                
                \node at (.1,1.2){$i$};
                \node at (.1,-1.2){$-i$};
                \node at (1.2,-.25){$1$};
                \node at (-1.3,-.25){$-1$};
            \end{scope}
            
            \node at (0,1.75) {$T(z)$};
            \begin{scope}[xshift = 2.5cm]
                \draw[<->](-2,0)--(2,0);
                \draw[<->](0,-2)--(0,2);
                \draw[green,fill=green,opacity=.2](-1,0)--(-1.9,0)--(-1.9,1.9)--(1.9,1.9)--(1.9,0)--(1,0)--(1,0)arc(0:180:1cm);
                \draw[green,fill=green,opacity=.2](-1,0)--(-1.9,0)--(-1.9,-1.9)--(1.9,-1.9)--(1.9,0)--(1,0)--(1,0)arc(0:-180:1cm);
                \draw[orange,fill=orange,opacity=.4](0,0)circle(1cm);
                \draw[red,fill=red,opacity=.2](1,0)arc(0:180:1cm);
                \draw[blue,fill=blue,opacity=.2](-1,0)--(-1.9,0)--(-1.9,1.9)--(1.9,1.9)--(1.9,0)--(1,0)--(1,0)arc(0:180:1cm);
                \draw [magenta, very thick](0,0)circle(1cm);
                \draw[very thick,blue!50!green,<->](-2,0)--(2,0);
                 \draw[black,fill=black] (0,1) circle (.075cm);
                \draw[black,fill=blue] (1,0) circle (.075cm);
                \draw[black,fill=purple] (-1,0) circle (.075cm);
                \draw[black,fill=cyan] (0,2.25)node[right]{\color{black}\,$\infty$} circle (.075cm);
                 \draw[black,fill=green] (0,-1)circle (.075cm);
                \draw[black,fill=orange] (0,0) circle (.075cm);
                
                \node at (.1,1.2){$i$};
                \node at (.1,-1.2){$-i$};
                \node at (1.2,-.25){$1$};
                \node at (-1.3,-.25){$-1$};
            \end{scope}
            
            \draw[thick,->](-1,1)to[out=45,in = 135] (1,1);
        \end{tikzpicture}
        \caption{Illustration of the linear fractional transformation \eqref{eq:T}.  The points $1$ and $-1$
        are fixed by $T$. The unit disk gets mapped to the upper half plane.}
        \label{Figure:T}
    \end{figure}
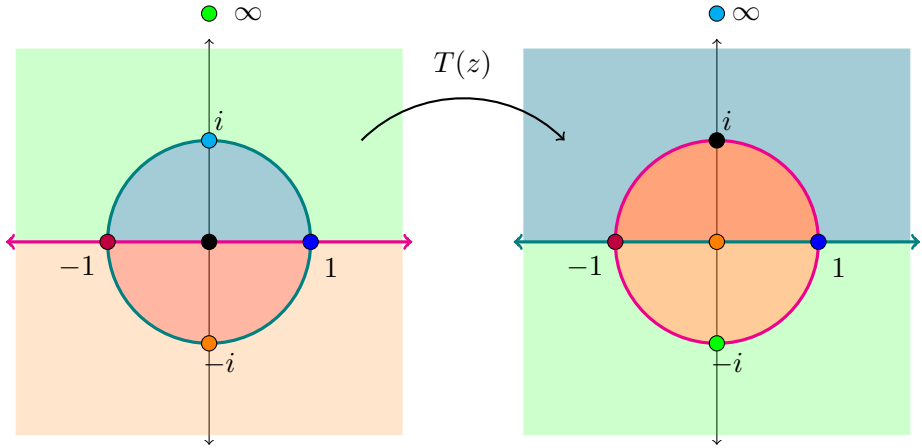
    
    If $\phi$ is inner, then $1 - i \phi$ and $1 + i \phi$ are both outer.
    Consequently, $T(\phi)$ belongs to $\RO$.  In fact,
    $T(\phi)$ belongs to $H^p$ for all $0 < p < 1$ (see \eqref{cvbnmdfdfi}).
    This is sharp, for $T(\phi)$ cannot belong to $H^1$ unless $\phi$ is constant (Theorem \ref{Theorem:p1}).
        Functions of the form $T(\phi)$, where $\phi$ is inner, form the basic building blocks from
    which all real outer functions can be built.  This prompts the following definition.
    
    \begin{Definition}\label{28734yiruhekfjd}
        A \emph{Cayley inner function} is a function of the form $T(\phi)$, where $\phi$ is an inner function.
    \end{Definition}

    The properties of Cayley inner functions are almost self evident. For one, a Cayley inner function is not inner at all, but outer!   
      Since a non-constant inner function $\phi$ maps $\D$ into $\D$ and the transformation $T$ maps $\D$ onto the upper half plane, it follows that
    $0 < \arg T(\phi) < \pi$ on $\D$, where $\arg$ refers to the principal branch of the argument.  
    Since $T(\phi)$ has real boundary values a.e.~on $\T$,
    the preceding implies that $\arg T(\phi)(\zeta) \in \{0,\pi\}$ for a.e.~$\zeta\in\T$.
    In fact, $\arg T(\phi)(\zeta) = \pi$ on $E = \phi^{-1}(\mathbb{L})$, where $\mathbb{L}$ denotes 
    the open arc of $\T$ that runs counterclockwise from $i$ to $-i$ (see Figure \ref{Figure:CT}).
    \begin{figure}
\centering
\begin{tikzpicture}
\begin{scope}[xshift=-5cm]
    \draw[blue,fill=blue,opacity=.2](0,0) circle (1.5cm);
    \draw[blue,line width=.1cm,opacity = .7] (0,1.5) arc (90:120:1.5);
    \draw[blue,line width=.1cm,opacity = .7,rotate=95] (0,1.5) arc (90:120:1.5);
        \draw[blue,line width=.1cm,opacity = .7,rotate=190] (0,1.5) arc (90:120:1.5);

    \node at (-.2,2) {$E$};
    \node at (-1.9,-.5) {$E$};
    \node at (.5,-1.7) {$E$};

\end{scope}
\draw[red,fill=red,opacity=.2](-1,0) circle (1.5cm);
\draw[blue,line width=.1cm,opacity=.7] (-1,1.5) arc (90:270:1.5);
\draw[<->](2,0)--(6,0);
\draw[<->](4,-2)--(4,2);
\draw[orange,fill=red,opacity=.2](2,0) rectangle (6,2);
\draw[blue,line width=.1cm,opacity=.7](2,0)--(4,0);

\draw[->] (-4,2)to[in=135,out=45](-2,2);
\node at (-3,2.7) {$\phi$};
\draw[->] (-0.2,2)to[in=135,out=45](1.8,2);
\node at (0.8,2.7) {$T$};

\draw[->] (-4,-2)to[in=-155,out=-25](2,-2);
\node at (-1,-3.2){$T(\phi)$};
\end{tikzpicture}
 \caption{Illustration of the mapping properties of a Cayley inner function.}
        \label{Figure:CT}
\end{figure}
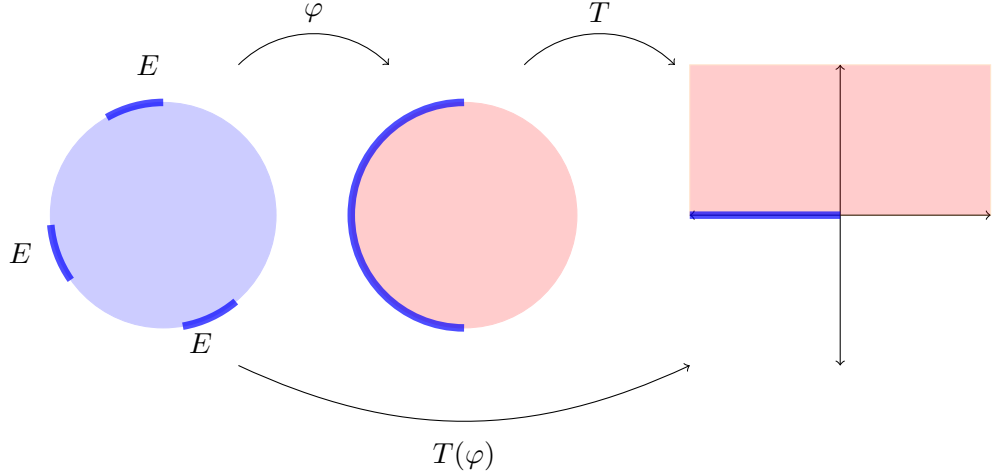
With a little work, the preceding computations can be reversed.

    For a Lebesgue measurable set $E \subseteq \T$ define 
    $$v_{E} = \mathscr{P}(\chi_{E}),$$
    the Poisson integral \eqref{eq:PIF} of the characteristic function $\chi_{E}$ for $E$. 
 Recall that $v_E$ is harmonic on $\D$
    and its (non-tangential) boundary values agree with $\chi_E$ a.e.~on $\T$. 
    Since $\chi_E$ only assumes the values $0$ and $1$, it follows that $v_E$
    assumes only values in $[0,1]$ on $\D$.  Furthermore, $v_E(0) = m(E)$. 
    
    If we normalize the harmonic conjugate $\widetilde{v_E}$ of $v_E$ so that $\widetilde{v_E}(0) = 0$, then 
    \begin{equation}\label{eq:CayleyInner}
        f_E = \exp[\pi(-\widetilde{v_E} + i v_E)]
    \end{equation}
    is analytic on $\D$, maps $\D$ into the upper half plane, and has real boundary values a.e.~on $\T$.  In particular, $f_E \in \mathsf{RO}$ and 
    $$\phi_{E} := T^{-1}(f_E)$$ is a bounded analytic function on $\D$ with unimodular values a.e.~on $\T$, i.e., an inner function. Thus $f_E = T(\phi_E)$ is a Cayley inner function and hence it belongs to $\mathsf{RO}$.
    
    The Cayley inner function $f_E$ obtained from \eqref{eq:CayleyInner}
    can alternatively be described as the exponential of a Herglotz integral, i.e., 
    \begin{equation}\label{eq:Herglotz}
        f_E(z) = \exp\left[ i\pi\int_E \frac{\zeta+z}{\zeta-z}\,dm(\zeta) \right].    
    \end{equation}
    Indeed, $v_E$ is the Poisson integral of its boundary function $\chi_E$ and thus 
    the integral in the exponential in \eqref{eq:Herglotz} can be obtained by analytic completion once 
    one recognizes 
    that $v_E$ is its real part and $\widetilde{v}_{E}(0) = 0$.
    
    \begin{Lemma}\label{Lemma:Cayley}
        Let $E \subseteq \T$ be Lebesgue measurable.  Then
        \begin{enumerate}
            \item $E = f_E^{-1}(-\infty,0) = \phi_E^{-1}(\mathbb{L})$, where $\mathbb{L} = \{ \zeta \in \mathbb{T} : \Re \zeta < 0 \}$;
            \item $\arg f_E = \pi \chi_E$ a.e.~on $\T$;
            \item $f_E(0) = e^{i\pi m(E) }$ and $\phi_E(0) = \tan[\frac{\pi}{2}(\frac{1}{2} - m(E))]$;
            \item $f_{\emptyset} \equiv 1$, $f_{\T} \equiv -1$, $\phi_{\emptyset} \equiv 1$, $\phi_{\T} \equiv -1$;
            \item If $f\in \mathsf{RO}$ satisfies (b), then $f= |f(0)|f_E$;
            \item If $\phi$ is inner and $\phi(0) \in \R$, then 
            	$\phi = \phi_E$, where $E = \phi^{-1}(\mathbb{L})$.
        \end{enumerate}
    \end{Lemma}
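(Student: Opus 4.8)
The plan is to read (a), (b), (d) and half of (c) straight off the explicit boundary formula for $f_E$, to finish (c) with a linear-fractional computation, to prove the uniqueness assertion (e) by a conjugate-function argument, and to deduce (f) from (e). First I would record the boundary behaviour: since $v_E = \chi_E$ a.e.\ on $\T$ and $\widetilde{v_E}$ is real-valued, the definition \eqref{eq:CayleyInner} gives $f_E = e^{-\pi\widetilde{v_E}}\,e^{i\pi\chi_E}$ a.e.\ on $\T$, so $|f_E| = e^{-\pi\widetilde{v_E}} > 0$ and $\arg f_E = \pi\chi_E$, which is exactly (b). For (a), $f_E$ is a negative real precisely where $e^{i\pi\chi_E} = -1$, i.e.\ on $E$, giving $E = f_E^{-1}(-\infty,0)$; since $\phi_E = T^{-1}\circ f_E$ and Table \ref{Table:T} records $T(\mathbb{L}) = (-\infty,0)$, we get $\phi_E^{-1}(\mathbb{L}) = f_E^{-1}(T(\mathbb{L})) = f_E^{-1}(-\infty,0) = E$. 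For (d), $v_\emptyset \equiv 0$ and $v_\T \equiv 1$ (hence $\widetilde{v_\emptyset} \equiv \widetilde{v_\T} \equiv 0$) make $f_\emptyset \equiv 1$ and $f_\T \equiv e^{i\pi} = -1$ constant, and $\phi_\emptyset = T^{-1}(1) = 1$, $\phi_\T = T^{-1}(-1) = -1$ by the same table.

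For (c), evaluating \eqref{eq:CayleyInner} at the origin and using $\widetilde{v_E}(0) = 0$ and $v_E(0) = m(E)$ yields $f_E(0) = e^{i\pi m(E)}$. Then $\phi_E(0) = T^{-1}(e^{i\pi m(E)})$, and inserting $T^{-1}(z) = i(z-i)/(z+i)$ together with the identity $(e^{i\alpha} - e^{i\beta})/(e^{i\alpha}+e^{i\beta}) = i\tan\frac{\alpha-\beta}{2}$ with $\alpha = \pi m(E)$ and $\beta = \pi/2$ collapses to $\phi_E(0) = \tan[\tfrac{\pi}{2}(\tfrac12 - m(E))]$.

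The crux is (e). Given $f \in \RO$ with $\arg f = \pi\chi_E$ a.e., I would form $g = f/f_E$. Both factors are outer, so $g$ is again outer: it is the outer function built from the quotient of the two moduli, since $\log|f| - \log|f_E| \in L^1$ (note $\log|f_E| = -\pi\widetilde{v_E}$ is the conjugate of the bounded function $\chi_E$, hence integrable). Moreover $g$ is analytic and nonvanishing. On $\T$ both $f$ and $f_E$ are real with the same argument $\pi\chi_E$, so $g = f/f_E > 0$ a.e., i.e.\ $\arg g \equiv 0$ a.e. By the outer representation \eqref{bdryouter}, the boundary argument of $g$ equals $\widetilde{\log|g|}$ plus the constant $\arg g(0)$; hence $\widetilde{\log|g|}$ is a.e.\ constant, and since the conjugation operator \eqref{sdiyfwr9efhudjk} annihilates only the constants, $\log|g|$ is itself constant. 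Thus $g$ is constant, necessarily the positive number $g(0) = f(0)/f_E(0)$; as $|f_E(0)| = 1$ from (c), this constant is $|f(0)|$, giving $f = |f(0)|\,f_E$. I expect this to be the main obstacle, the delicate points being the justification that $g$ is outer and that $\log|g| \in L^1$, so that the ``constant conjugate forces a constant function'' step is legitimate.

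Finally, (f) reduces to (e). For an inner $\phi$ put $\psi = T(\phi)$, a Cayley inner function lying in $\RO$; by the discussion preceding the lemma, $\arg\psi = \pi\chi_E$ a.e.\ with $E = \phi^{-1}(\mathbb{L})$, so (e) yields $\psi = |\psi(0)|\,f_E$. Because $\phi(0) \in \R$ and $T(\R) = \T$ by Table \ref{Table:T}, we have $\psi(0) = T(\phi(0)) \in \T$, whence $|\psi(0)| = 1$ and $\psi = f_E$. Applying $T^{-1}$ gives $\phi = T^{-1}(f_E) = \phi_E$, as claimed.
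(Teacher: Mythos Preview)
Your proof is correct and tracks the paper's argument closely for parts (a)--(d) and (f); these are handled identically, reading the claims off the explicit formula for $f_E$ and the mapping properties of $T$.

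For part (e) you take a slightly different route than the paper. Both arguments form $g=f/f_E$, observe that $g$ is outer with $\log|g|\in L^1$, and use that $g>0$ a.e.\ on $\T$. The paper then proves the intermediate fact that $\log g\in N^+$ (by writing the outer logarithm as a Cauchy transform of an $L^1$ function, hence in $\bigcap_{p<1}H^p$), bounds $|\log g|$ by $|\log|f||+|\log|f_E||\in L^1$, applies Smirnov's theorem to get $\log g\in H^1$, and finishes with Theorem~\ref{Theorem:p1} ($\RR^1=\R$). You instead read off from the outer boundary formula that $\widetilde{\log|g|}$ is a.e.\ constant and invoke the injectivity (modulo constants) of the conjugation operator on $L^1$. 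These two arguments are essentially equivalent: the injectivity statement you cite is itself proved by exactly the Smirnov/$\RR^1$ mechanism the paper spells out. Your version is more compact but treats that fact as a black box, whereas the paper develops the key ingredient $\log g\in N^+$ explicitly. One small caution: from $g>0$ a.e.\ one literally obtains $\widetilde{\log|g|}+\gamma\in 2\pi\Z$ a.e., not that $\widetilde{\log|g|}$ is a single constant; the paper's phrasing ``$(\arg g)^2=0$'' glosses over the same point, and the gap is closed once one knows $\log g\in H^1$ (so its imaginary part is a genuine harmonic $h^1$ function with integer boundary values, hence constant).
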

    
    \begin{proof}
        (a), (b) 
        The function $f_E$ from \eqref{eq:CayleyInner} is Cayley inner and
        satisfies $\arg f_E = \pi \chi_E$ a.e.~on $\T$ and $f_E^{-1}(-\infty,0) = E$ by construction.  The mapping properties of $T$
        ensure that $\phi_E =T^{-1}(f_E)$ satisfies $\phi_E^{-1}(\mathbb{L}) = E$ (Figure \ref{Figure:T}).  
        
        \noindent (c) Since $v_E(0) = m(E)$ and $\widetilde{v_E}(0) = 0$, we have 
        $$f_E(0) = e^{i \pi v_E(0)} = e^{i \pi m(E)}.$$
        A somewhat tedious, but ultimately elementary, calculation reveals that $$\phi_E(0) = \tan\left(\frac{\pi}{2}\left[\frac{1}{2} - m(E)\right]\right).$$
        
        \noindent (d) These identities come from the observations
        $$\widetilde{v_{\varnothing}} \equiv v_{\varnothing} \equiv 0, \quad v_{\T} \equiv 1, \quad \widetilde{v_{\T}} \equiv 0.$$
        
        \noindent (e) To prove this we first need a little detail. If $g \in N^{+}$ and outer then 
        $$g(z) = e^{i \gamma} \exp\left(\int_{\T} \frac{\zeta + z}{\zeta - z} \log |g(\zeta)| dm(\zeta)\right)$$
        and thus
        $$\log g(z) = i \gamma + \int_{\T} \frac{\zeta + z}{\zeta - z} \log |g(\zeta)| dm(\zeta).$$
        Any analytic function on $\D$ whose range is contained in $\{\Re z > 0\}$ belongs to $H^p$ for all $0 < p < 1$ \cite{Garnett}. One can show that the Cauchy transform 
        $$\int \frac{1}{1 - \overline{\zeta} z} d \mu(\zeta)$$ of a positive measure $\mu$ on $\T$ satisfies this property and thus belongs to $H^p$, $0 < p < 1$. Writing any complex measure as a linear combination of four positive measures shows that the Cauchy transform of any measure also belongs to $H^p$. From here it follows that 
        $$\int_{\T} \frac{\zeta + z}{\zeta - z} \log |g(\zeta)| dm(\zeta)$$ 
        belongs to $H^p$ and, as a result,  $\log g \in N^{+}$. 
        
      Apply this to the function $g = f/f_{E}$, where $f$ is $\mathsf{RO}$ and satisfies (b). Thus $\log g \in N^{+}$.
        Furthermore, 
        $$\left|\log \frac{f}{f_{E}}\right|^2 = \left(\log \frac{|f|}{|f_{E}|}\right)^2 + \left(\arg \frac{f}{f_{E}}\right)^2 = \left(\log \frac{|f|}{|f_{E}|}\right)^2 \quad \mbox{a.e.~on $\T$}.$$
        Thus, 
        $$\left|\log \frac{f}{f_{E}}\right| \leqslant \left|\log |f| + \log |f_E|\right| \in L^1.$$
        Hence $\log g \in N^{+}$ with integrable boundary values and thus, by Smirnov's Theorem (see \eqref{SmirnovThm}), $\log g \in H^1$. Since $f$ and $f_E$ share the same sign a.e.~on $\T$ we see that $\log f/f_E$ is real a.e.~on $\T$. By Theorem \ref{Theorem:p1} we conclude that $\log f/f_E$ is a constant function and thus $f$ and $f_E$ are positive scalar multiples of each other. Since $|f_{E}(0)| = 1$ it follows that $f = |f(0)| f_E$.
        
        \noindent (f) If $\phi$ is inner and $\phi(0) \in \R$ then $f = T(\phi)$ satisfies 
        $|f(0)| = 1$.  Let $E = f^{-1}(-\infty,0)$ and observe that (e)
        ensures that $f = f_E$.  Consequently, $\phi = T^{-1}(f_E) = \phi_E$, where $E = \phi^{-1}(\mathbb{L})$.
    \end{proof}
    
    A nice gem follows from the proof of this result: If $f$ and $g$ are outer functions with $\arg f = \arg g$ almost everywhere on $\T$, then $f = \lambda g$ where $\lambda > 0$. 

    \begin{Example}
        The functions $f_E$ enjoy some convenient multiplicative properties.  For example, since 
        $\chi_E + \chi_F = \chi_{E \cap F} + \chi_{E \cup F}$
        for any Lebesgue measurable sets $E,F \subseteq \T$, we can use \eqref{eq:Herglotz} to see that the corresponding Cayley
        inner functions satisfy
        \begin{equation*}
            f_E f_F = f_{E \cap F} f_{E \cup F}.
        \end{equation*}
        In particular, $f_E f_F = f_{E \cup F}$ whenever $E \cap F = \varnothing$.
        We also have 
        \begin{equation}\label{eq:fEE1}
            f_E f_{\T\backslash E} \equiv -1.  
        \end{equation}
        These identities can be extended to collections of three or more sets in an analogous way.
    \end{Example}

    \begin{Example}\label{Example:CayleyArc}
        Suppose that $\beta < \alpha < \beta + 2\pi$ and 
        \begin{equation*}
            E = \{ e^{i\theta} : \beta < \theta < \alpha \}
        \end{equation*}
        is an arc in $\T$, running counterclockwise from $e^{i\beta}$ to $e^{i\alpha}$.  Then 
        we can obtain $f_E$ from \eqref{eq:Herglotz}:
        \begin{equation*}
          f_E(z) = \exp\left[ \frac{i}{2} \int_{\beta}^{\alpha} \frac{e^{i\theta}+z}{e^{i\theta}-z}\,d\theta \right].
        \end{equation*}
        Some routine calculations show that
        \begin{equation}\label{eq:CayleyArc}
            f_E(z) = e^{-i (\frac{\alpha-\beta}{2})} \left( \frac{e^{i\alpha}-z}{e^{i\beta}-z} \right)
        \end{equation}
        and confirm that
        \begin{align*}
            \arg f_E(e^{i\theta})	
            &=	\arg\frac{ e^{i \alpha/2} - e^{-i \alpha/2} e^{i\theta}} { e^{i \beta/2} - e^{-i \beta/2} e^{i\theta}}	\\
            &=	\arg\frac{e^{i(\alpha-\theta)/2}-e^{-i(\alpha-\theta)/2}}  {e^{i(\beta-\theta)/2}-e^{-i(\beta-\theta)/2}}\\
            &=	\arg \left[\sin\Big(\frac{\theta-\alpha}{2}\Big) / \sin\Big(\frac{\theta-\beta}{2}\Big) \right]	\\
            &=	\pi\chi_E(e^{i\theta}),
        \end{align*}
        as expected. 
        
        Since $f_E$ is a linear fractional transformation, it follows that the inner function
        $\phi_E = T^{-1}(f_E)$ is also a linear fractional transformation.  This means it must be a unimodular
        constant multiple of a single Blaschke factor.  In what follows, it will be convenient to assume that 
        $0 < \alpha - \beta < \pi$.  This ensures that 
        \begin{equation}\label{eq:ExampleArcsOrigin}
            \phi_E(0) = \tan\big(\tfrac{1}{4}[\pi - (\alpha - \beta)]\big),
        \end{equation}
        and so $0 < \phi_E(0) <1$.  Consequently, 
        \begin{equation*}
        \phi_E(z) = \frac{ |z_E| }{ z_E}  \frac{z_E -z}{1 - \overline{z_E} z}
        \end{equation*}
        for some $z_E \in \D$.  From \eqref{eq:ExampleArcsOrigin} we know that       
        \begin{equation*}
            |z_E| = \tan\big(\tfrac{1}{4}[\pi - (\alpha - \beta)] \big).
        \end{equation*}
        By symmetry, one expects $z_E$ to lie on the line segment joining the origin to the midpoint 
        $e^{i\frac{1}{2}(\alpha+\beta)}$ of the arc $E$.  Since $f_E(z_E) = T(0) = i$, in which $f_E$ is given by 
        \eqref{eq:CayleyArc}, we solve for $z_E$ in the equation
        \begin{equation*}
            e^{-i (\frac{\alpha-\beta}{2})} \left( \frac{e^{i\alpha}-z_E}{e^{i\beta}-z_E} \right) = i
        \end{equation*}
        to obtain
        \begin{align*}
            z_E 	
            &=	\frac{e^{i \alpha/2} - i e^{i \beta/2}}{e^{-i \alpha/2} - i e^{-i \beta/2}}	\\
            &= 	e^{i(\frac{\alpha+\beta}{2})}  \frac{e^{-i \beta/2} - i e^{-i \alpha/2}} {e^{-i \alpha/2} - i e^{-i \beta/2}}	\\
            &=	e^{i(\frac{\alpha+\beta}{2})}  \frac{e^{-i \frac{\beta}{2}} - e^{i\frac{\pi}{2} -i \frac{\alpha}{2}}}
            	{e^{-i \frac{\alpha}{2}} + e^{-i\frac{\pi}{2} -i \frac{\beta}{2} }}	\\
            &=	e^{i(\frac{\alpha+\beta}{2})} \frac{e^{i(-\frac{\pi}{4} + \frac{\alpha}{4} + \frac{\beta}{4})}}
            	{-ie^{i(\frac{\pi}{4} + \frac{\alpha}{4} + \frac{\beta}{4})}} \cdot
            	\frac{e^{-i \frac{\beta}{2}} - e^{i\frac{\pi}{2} -i \frac{\alpha}{2}}}
            	 {e^{-i \frac{\alpha}{2}} + e^{-i\frac{\pi}{2} -i \frac{\beta}{2} }}\\
            &=	e^{i(\frac{\alpha+\beta}{2})} \frac{-1}{-i}
            	 \frac{e^{i\frac{1}{4}[\pi - (\alpha - \beta)]} - e^{-i\frac{1}{4}[\pi - (\alpha - \beta)]}}
              {e^{i\frac{1}{4}[\pi - (\alpha - \beta)]} + e^{-i\frac{1}{4}[\pi - (\alpha - \beta)]}}\\ 
            &=	e^{i(\frac{\alpha+\beta}{2})} \tan\big(\tfrac{1}{4}[\pi - (\alpha - \beta)]\big)
        \end{align*}
        as expected.  In particular, $\phi_E$ is the single Blaschke factor with zero 
        \begin{equation}\label{eq:CircularArcZero}
            z_E = e^{i\frac{1}{2}(\alpha+\beta)} \tan\left[\frac{1}{4}\left(\pi - (\alpha - \beta)\right)\right].
        \end{equation}
    \end{Example}
    
    \begin{Example}
        Let $E \subseteq \T$ be a Lebesgue measurable and suppose that 
        $m(E) \neq \frac{1}{2}$.  Then Lemma \ref{Lemma:Cayley} guarantees that $\phi_E(0) \neq 0$
        and hence we may write
        \begin{equation*}
            \phi_E(z) = \prod_{n \geq 1} \frac{|z_n|}{z_n} \frac{ z_n - z}{1-\overline{z_n}z}
            \exp\left(- \int_{\T} \frac{\zeta+z}{\zeta-z} \, d\mu(\zeta) \right),
        \end{equation*}
        where $\{z_n\}_{n \geq 1}$ is a Blaschke sequence
        and $\mu$ is a finite, non-negative, singular measure on $\T$.  
        Appealing again to Lemma \ref{Lemma:Cayley}, we find that
        \begin{equation}\label{eq:EfromInner}
            \tan\left[\frac{\pi}{2}\left(\frac{1}{2} - m(E)\right)\right]= e^{-\mu(\T)} \prod_{n \geq 1} |z_n|.
        \end{equation}
    \end{Example}

    One can verify that the linear fractional transformation $T$ defined by \eqref{eq:T}
    satisfies the following algebraic identites:
        \begin{align}
          T(z_1 z_2) 	
          &=	\frac{T(z_1)T(z_2) + T(z_1) + T(z_2) - 1}{1 + T(z_1) + T(z_2) - T(z_1)T(z_2)}, \nonumber\\[5pt]
          T( z_2 / z_1 ) 	
          &=	\frac{T(z_1)T(z_2) - T(z_1) + T(z_2) + 1}{T(z_1)T(z_2) + T(z_1) - T(z_2) + 1} ,\label{eq:T-Quotient}\\[5pt]
          T(z_1 + z_2) 	
          &=	\frac{3 T(z_1)T(z_2) + i T(z_1) + i T(z_2) + 1}{3i + T(z_1) + T(z_2) + i T(z_1)T(z_2)}. \label{eq:T-Addition}
        \end{align}
    These often lead to some curious identities involving Cayley inner functions. Here are two such examples. 

    \begin{Example}    
        Suppose that $f_1 = T(\phi_1)$ and $f_2 = T(\phi_2)$ for some inner functions $\phi_1$ and $\phi_2$.  
        Then $f_1+f_2$ is real valued a.e.~on $\R$ and maps $\D$ into the upper half plane.
        Consequently, there is an inner function $\phi$ so that $f_1 + f_2 = T(\phi)$.  
        Since $(T \circ T)(z) = 1/z$ by \eqref{eq:Tp1}, it follows that
        $T(f_1) = 1/\phi_1$, $T(f_2) = 1/ \phi_2$, and 
        $T(f_1 + f_2) =  1 / \phi$.  Then \eqref{eq:T-Addition} reveals that 
        \begin{equation}\label{eq-3Formula}
            \phi = \frac{3i \phi_1 \phi_2 + \phi_1 + \phi_2 + i}{3 + i \phi_1 + i \phi_2 + \phi_1 \phi_2}.
        \end{equation}
        Although this does not look like an inner function, it is.  The denominator 
        \begin{equation*}
        3 + i \phi_1 + i \phi_2 + \phi_1 \phi_2 = (1 + i\phi_1) + (1 + i \phi_2) + (1 + \phi_1 \phi_2)
        \end{equation*}
        is the sum of three outer functions, each of which 
        assume values in the right half-plane, so it is outer.
        Thus $\phi$ is the inner factor of the numerator $3i \phi_1 \phi_2 + \phi_1 + \phi_2 + i$.
    \end{Example}

    \begin{Example}
        A trivial consequence of Helson's Theorem (Theorem \ref{Theorem:Helson}) is that
        each $f \in \RR$ can be written as $f = T(\psi_2 / \psi_1)$, where $\psi_1$ and $\psi_2$ are relatively
        prime inner functions.  This fact, and little bit of algebra, show that every function in $\RR$
        is a simple algebraic function of two Cayley inner functions.  Indeed, if $f_1 = T(\psi_1)$ and $f_2 = T(\psi_2)$,
        then \eqref{eq:T-Quotient} reveals that
        \begin{align*}
          f	
          &=	\frac{T(\psi_1)T(\psi_2) - T(\psi_1) + T(\psi_2) + 1}{T(\psi_1)T(\psi_2) + T(\psi_1) - T(\psi_2) + 1}	\\
          &=	\frac{f_1 f_2 - f_1 + f_2 +1}{f_1 f_2 + f_1 - f_2 + 1}.
        \end{align*}
    \end{Example}

\section{Unilateral Products of Cayley Inner Functions}

    We now consider the convergence of products of the form
    \begin{equation}\label{eq:Unilateral}
      \prod_{n \geq 1} T(\phi_n) = \prod_{n \geq 1} \left(i \frac{1 - i\phi_n}{1+i \phi_n} \right), 
    \end{equation}
    where $\phi_n$ is a sequence of inner functions.  We refer to such products 
    as \emph{unilateral products} to distinguish them from the \emph{bilateral products}
    (i.e., analogous products with indices ranging from $-\infty$ to $\infty$)
    that will be considered later.  As we will see, a completely satisfactory theory of unilateral products
    can be developed.  In contrast, bilateral products pose a host problems, not all of which
    have been resolved.

\subsection{Bounded argument}
    Suppose that $f \in \RO$ has bounded argument.  It is instructive to consider this special case
    before considering the general setting.  The approach below is essentially due to 
    Poltoratski \cite{MR1889082}.  
    
    Since $f$ has bounded argument, we may write
    \begin{equation}\label{eq:OfBA}
        f = |f(0)| \exp[\pi(-\widetilde{v}+iv)],
    \end{equation}
    where $v$ is non-negative, integer valued (to make $f$ real valued almost everywhere on $\T$), and bounded above by some integer $N$.
    For each positive integer $n$, let
    \begin{equation*}
        E_n = \{ \zeta \in \T : v(\zeta) \geq n\}
    \end{equation*}
    and observe that 
    \begin{equation*}
        E_1 \supseteq E_2 \supseteq \cdots \supseteq E_N \supseteq E_{N+1} = \varnothing.
    \end{equation*}
    Then
    \begin{equation*}
        v = \sum_{1 \leq n \leq N} \chi_{E_n}
    \end{equation*}
    so that
    \begin{equation*}
        \exp[\pi(-\widetilde{v}+iv)] = \prod_{1 \leq n \leq N} \exp[\pi(-\widetilde{\chi}_{E_n} + i \chi_{E_n})] = \prod_{1 \leq n \leq N} f_{E_n}.
    \end{equation*}
    Returning to \eqref{eq:OfBA} and letting $\phi_n = \phi_{E_n}$, the preceding yields
    \begin{equation*}
      f = |f(0)| \prod_{1 \leq n \leq N} T(\phi_n) = |f(0)| \prod_{1 \leq n \leq N} i \left( \frac{1-i\phi_n}{1+i\phi_n}\right).
    \end{equation*}
    Combining this observation with Theorem \ref{Theorem:Factor} yields the following result.

    \begin{Theorem}\label{ufhpaiuref}
    Suppose $f \in \RR^{+}$ is factored as $f = K_{f} R_{f}$ as in Theorem \ref{Theorem:Factor}, where $K_f$ is a Koebe inner function and $R_{f} \in \mathsf{RO}$. If $\arg R_{f}$ is bounded, then there are inner functions
        $\phi_1,\phi_2,\ldots,\phi_N$ so that
        \begin{equation*}
            f = K_f \prod_{1 \leq n \leq N} T(\phi_n).
        \end{equation*}
        Moreover, the product belongs to $H^p$ whenever $f$ belongs to $H^p$.
    \end{Theorem}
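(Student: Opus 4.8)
The plan is to obtain the statement by combining Theorem~\ref{Theorem:Factor} with the bounded-argument decomposition of real outer functions carried out in the paragraphs immediately preceding the theorem; essentially no new idea is needed, only a careful splicing of the two. First I would apply Theorem~\ref{Theorem:Factor} to write $f = K_f R_f$, where $K_f$ is Koebe inner and $R_f \in \RO$ (and, in the $\RR^p$ case, $R_f \in \RR^p$). By hypothesis $\arg R_f$ is bounded, so $R_f$ is precisely the kind of function analyzed around \eqref{eq:OfBA}, and the whole task reduces to running that analysis on $R_f$ and then multiplying through by $K_f$.

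Carrying this out, I would write $R_f = |R_f(0)|\exp[\pi(-\widetilde{v}+iv)]$ as in \eqref{eq:OfBA}, where $v = \tfrac{1}{\pi}\arg R_f$ is a bounded harmonic function on $\D$ that is integer-valued a.e.\ on $\T$ (forced by $R_f$ being real a.e.\ on $\T$), bounded above by some integer $N$ (forced by the bounded-argument hypothesis), and nonnegative after the branch choice recorded in \eqref{eq:OfBA}. The layer-cake decomposition $v = \sum_{1 \le n \le N}\chi_{E_n}$ with $E_n = \{v \ge n\}$, together with linearity of the Poisson extension and of the normalized harmonic conjugate, turns the exponential into the finite product $\prod_{1 \le n \le N} f_{E_n}$, each factor being the Cayley inner function of \eqref{eq:CayleyInner}. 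Setting $\phi_n = \phi_{E_n} = T^{-1}(f_{E_n})$ yields $R_f = |R_f(0)| \prod_{1 \le n \le N} T(\phi_n)$, and hence $f = K_f\,|R_f(0)|\prod_{1 \le n \le N} T(\phi_n)$.

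It then remains to dispose of the positive constant $|R_f(0)|$. I would note that a positive scalar multiple of a Cayley inner function is again Cayley inner: for $c>0$ the function $c\,T(\phi)$ still maps $\D$ into the upper half plane, is outer, and is real a.e.\ on $\T$, so $\phi' := T^{-1}(c\,T(\phi))$ is inner and $c\,T(\phi) = T(\phi')$. Absorbing $|R_f(0)|$ into the first factor when $N \ge 1$ (or, in the degenerate case $N = 0$ in which $R_f$ is a positive constant, writing $R_f = T(\phi_1)$ for the unimodular constant $\phi_1 = T^{-1}(R_f)$) produces the asserted form $f = K_f \prod_{1 \le n \le N} T(\phi_n)$ in which the product now equals $R_f$ exactly. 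The $H^p$ claim is then immediate: when $f \in H^p$ the $\RR^p$ half of Theorem~\ref{Theorem:Factor} gives $R_f \in \RR^p \subseteq H^p$, and the product \emph{is} $R_f$, so it lies in $H^p$.

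Since the conceptual content lives entirely in the preceding discussion, the only genuinely new work is bookkeeping, and the step I would watch most carefully is the passage from $\exp[\pi(-\widetilde{v}+iv)]$ to $\prod f_{E_n}$: one must verify that $\widetilde{v} = \sum_n \widetilde{v_{E_n}}$ holds with the normalization $\widetilde{v_{E_n}}(0)=0$ used to define each $f_{E_n}$, which is exactly what keeps the leading constant equal to $|R_f(0)|$ and introduces no spurious unimodular factor. The absorption of that constant into the Cayley inner framework (and the $N=0$ boundary case) is the other point to record, but neither constitutes a real obstacle.
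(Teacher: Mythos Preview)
Your proposal is correct and follows essentially the same approach as the paper: the argument is precisely the bounded-argument layer-cake decomposition recorded in the paragraphs preceding the theorem, applied to $R_f$ and then spliced with Theorem~\ref{Theorem:Factor}. You in fact go slightly beyond the paper by explicitly absorbing the leading constant $|R_f(0)|$ into a Cayley inner factor (and handling the $N=0$ edge case), a bookkeeping point the paper leaves implicit in passing from the displayed identity $R_f = |R_f(0)|\prod T(\phi_n)$ to the constant-free statement of the theorem.
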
	    

\subsection{A convergence criterion}

It turns out that practical necessary and sufficient conditions exist for determining
when products of the form \eqref{eq:Unilateral} converge.  In fact, as we will see in a moment, any function $f \in \RO$ with semibounded argument can be expanding
in a product of the form \eqref{eq:Unilateral} that converges absolutely and locally
uniformly on $\D$.  We first require a basic lemma.

    \begin{Lemma}\label{Lemma:ProductPreserve}
    	Let $\{z_n\}_{n \geq 1}$ be a sequence in $\C \backslash\{i\}$.
    	Then $\prod_{n \geq 1} z_n$ converges absolutely
    	if and only if $\prod_{n \geq 1} T(z_n)$ converges absolutely.
    \end{Lemma}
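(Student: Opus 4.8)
The plan is to reduce both implications to a single comparison between the tail series $\sum_n |z_n - 1|$ and $\sum_n |T(z_n) - 1|$. I would use the standard characterization that an infinite product $\prod_n w_n$ converges absolutely exactly when $\sum_n |w_n - 1| < \infty$, together with the elementary necessary condition that the factors must tend to $1$ for absolute convergence to hold in the first place.

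First I would record the one algebraic identity that drives everything. Since $T(1) = 1$, a direct computation gives
\begin{equation*}
  T(z) - 1 = i\frac{1 - iz}{1+iz} - 1 = \frac{(1-i)(z-1)}{1 + iz},
\end{equation*}
valid for every $z \neq i$, so that the left-hand side is finite precisely on the domain assumed in the hypothesis. Taking moduli and using $|1 - i| = \sqrt{2}$,
\begin{equation*}
  |T(z) - 1| = \sqrt{2}\,\frac{|z - 1|}{|1 + iz|}.
\end{equation*}

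Next I would argue that in each direction the relevant factors tend to $1$, which is what makes the denominator $1 + iz$ harmless. If $\prod_n z_n$ converges absolutely, then $z_n \to 1$ directly. If instead $\prod_n T(z_n)$ converges absolutely, then $T(z_n) \to 1$, and since $T$ is a M\"obius transformation with continuous inverse and $T^{-1}(1) = 1$ (Table \ref{Table:T}), we again get $z_n = T^{-1}(T(z_n)) \to 1$. In either case $1 + iz_n \to 1 + i \neq 0$, so there is an index $N$ beyond which $\tfrac{1}{2}\sqrt{2} \le |1 + iz_n| \le 2\sqrt{2}$. Feeding these bounds into the modulus identity yields the two-sided comparison
\begin{equation*}
  \tfrac{1}{2}|z_n - 1| \;\le\; |T(z_n) - 1| \;\le\; 2|z_n - 1| \qquad (n \ge N),
\end{equation*}
whence $\sum_n |T(z_n) - 1| < \infty$ if and only if $\sum_n |z_n - 1| < \infty$. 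The two products therefore converge absolutely together.

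There is no deep obstacle here; the only point requiring care is the logical bookkeeping, since one cannot presume $z_n \to 1$ at the outset. This must be extracted separately in each direction: from absolute convergence of $\prod_n z_n$ in the forward implication, and from absolute convergence of $\prod_n T(z_n)$ combined with the continuity of $T^{-1}$ in the reverse one. The hypothesis $z_n \neq i$ ensures every factor $T(z_n)$ is finite, and because $z_n \to 1 \neq i$ the denominator is bounded away from $0$ on the tail, which is all that the convergence question depends on.
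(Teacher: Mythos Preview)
Your proof is correct and follows essentially the same route as the paper: both use the identity $1 - T(z) = \dfrac{(1-i)(1-z)}{1+iz}$, deduce $z_n \to 1$ in each direction (from $z_n \to 1$ directly, or from $T(z_n)\to 1$ together with $T(1)=1$), and then use the resulting lower bound on $|1+iz_n|$ to compare $\sum|z_n-1|$ with $\sum|T(z_n)-1|$. Your write-up just makes the constants in the two-sided comparison explicit.
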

    
    \begin{proof}
        If $\prod_{n \geq 1} z_n$ converges absolutely, then $z_n \to 1$ and $1+iz_n$ is bounded away from zero.  Since
        \begin{equation}\label{eq:PreserveProduct}
        	1 - T(z_n) = \frac{1-i}{1+iz_n}(1-z_n),
        \end{equation}
        the forward implication follows.  If the product $\prod_{n \geq 1} T(z_n)$ converges absolutely,
        then $T(z_n) \to 1$.  Since $T(1) = 1$, we conclude that $z_n \to 1$ and hence $1+iz_n$ is bounded away from zero.
        Appealing to \eqref{eq:PreserveProduct} yields the reverse implication.
    \end{proof}

    The following lemma reduces the consideration of products of Cayley inner functions
    to products of inner functions.  This is a significant reduction, since determining whether or not a product
    of inner functions converges is straightforward (see Lemma \ref{sdlfkjsodif} below).
    
    \begin{Lemma}\label{Lemma:UnilateralConvergence}
      Let $\phi_n$ be a sequence of inner functions satisfying $\phi_n(0) \not = 0$ and let $f_n = T(\phi_n)$.  The following are equivalent:
      \begin{enumerate}
        \item The product $\prod_{n \geq 1} \phi_n(0)$ converges absolutely;
        \item The product $\prod_{n \geq 1} \phi_n$ converges absolutely and locally uniformly on $\D$;
        \item The product $\prod_{n \geq 1} f_n(0)$ converges absolutely;
        \item The product $\prod_{n \geq 1} f_n$ converges absolutely  and locally uniformly on $\D$.
      \end{enumerate}
    \end{Lemma}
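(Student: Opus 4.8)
The plan is to prove the four conditions equivalent by running the implication cycle (a) $\Rightarrow$ (b) $\Rightarrow$ (d) $\Rightarrow$ (c) $\Rightarrow$ (a), which isolates the one genuinely new estimate and lets everything else follow from Lemma \ref{Lemma:ProductPreserve} and the identity \eqref{eq:PreserveProduct}. Throughout I would use that an inner function maps $\D$ into $\overline{\D}$, and that a non-constant inner function sends $\D$ into $\D$ itself; in particular each $\phi_n(0) \in \D \setminus \{i\}$, so that $f_n(0) = T(\phi_n(0))$ is well defined and Lemma \ref{Lemma:ProductPreserve} applies to the scalars $z_n = \phi_n(0)$. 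The hypothesis $\phi_n(0) \neq 0$ is what makes the products in (a) and (c) genuine nonvanishing products rather than ones that collapse to zero.

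The crux is (a) $\Rightarrow$ (b), where I must upgrade absolute convergence of the scalar product $\prod \phi_n(0)$ to absolute, locally uniform convergence of the functional product $\prod \phi_n$. The key observation is that $g_n := 1 - \phi_n$ maps $\D$ into the right half-plane $\{\Re > 0\}$, since $\Re \phi_n(z) \leq |\phi_n(z)| < 1$ on $\D$. Composing $g_n$ with the conformal map $w \mapsto (w - g_n(0))/(w + \overline{g_n(0)})$ of the right half-plane onto $\D$, which sends $g_n(0)$ to $0$, and applying the Schwarz lemma yields the distortion bound $|1 - \phi_n(z)| \leq \frac{1+r}{1-r}\,|1 - \phi_n(0)|$ for $|z| \leq r$. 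Summing over $n$ gives $\sum_n \sup_{|z|\leq r} |1 - \phi_n(z)| \leq \frac{1+r}{1-r}\sum_n |1 - \phi_n(0)| < \infty$ for every $r < 1$, which is exactly absolute and locally uniform convergence of $\prod \phi_n$ on $\D$.

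For (b) $\Rightarrow$ (d): absolute, locally uniform convergence of $\prod \phi_n$ forces $\phi_n \to 1$ uniformly on each compact $K \subseteq \D$, so on $K$ the denominator $1 + i\phi_n$ is bounded away from $0$ for all large $n$. The identity \eqref{eq:PreserveProduct}, namely $1 - T(\phi_n) = \frac{1-i}{1+i\phi_n}(1 - \phi_n)$, then gives $\sup_K |1 - f_n| \leq C_K \sup_K|1 - \phi_n|$ for large $n$, whence $\sum_n \sup_K |1 - f_n| < \infty$ and $\prod f_n$ converges absolutely and locally uniformly. The implication (d) $\Rightarrow$ (c) is immediate by specializing to $z = 0$, and (c) $\Rightarrow$ (a) is the reverse direction of Lemma \ref{Lemma:ProductPreserve} applied to $z_n = \phi_n(0)$, using $f_n(0) = T(\phi_n(0))$. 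The main obstacle is the single estimate in (a) $\Rightarrow$ (b); once the positive-real-part structure of $1 - \phi_n$ is recognized, the remaining links are routine bookkeeping with Lemma \ref{Lemma:ProductPreserve} and \eqref{eq:PreserveProduct}.
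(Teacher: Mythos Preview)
Your proposal is correct and follows essentially the same cycle (a) $\Rightarrow$ (b) $\Rightarrow$ (d) $\Rightarrow$ (c) $\Rightarrow$ (a) as the paper, with the same key estimate $|1-\phi_n(z)| \leq \tfrac{1+|z|}{1-|z|}\,|1-\phi_n(0)|$ for (a) $\Rightarrow$ (b); you derive it via the half-plane Schwarz lemma applied to $1-\phi_n$, whereas the paper applies the Schwarz--Pick inequality directly to $\phi_n$, but these are the same computation. Your treatments of (b) $\Rightarrow$ (d) (using \eqref{eq:PreserveProduct} directly on compact sets) and (c) $\Rightarrow$ (a) (one application of Lemma~\ref{Lemma:ProductPreserve} rather than the paper's three via $T\circ T\circ T$) are mild streamlinings of the paper's argument.
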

    
    \begin{proof}
        (a) $\Rightarrow$ (b) Suppose that $\prod_{n \geq 1} \phi_n(0)$ converges absolutely.  For each $n \in \mathbb{N}$,
        the Schwarz' Lemma yields
        \begin{equation*}
            |\phi_n(z) - \phi_n(0)| \leq |z| \left|1 - \overline{\phi_n(0)} \phi_n(z) \right|, \quad z \in \D.
        \end{equation*}
        The above inequality can be rewritten as
        \begin{equation*}
            |(1-\phi_n(0)) - (1 - \phi_n(z))| \leq |z| \left| (1-\phi_n(z)) + \phi_n(z)(1 - \overline{\phi_n(0)}) \right|,
        \end{equation*}
        which implies
        \begin{equation*}
            |1 - \phi_n(z)| - |1 - \phi_n(0)| \leq |z| \big( | 1 - \phi_n(z)| + |1 -\phi_n(0)| \big).
        \end{equation*}
        From here we deduce that 
        \begin{equation}\label{eq:InnerSchwarz}
            |1 - \phi_n(z)| \leq \left( \frac{1+|z|}{1-|z|} \right) |1 - \phi_n(0)|,
        \end{equation}
        and hence the absolute convergence of  $\prod_{n \geq 1} \phi_n(0)$ implies
        the absolute and locally uniform convergence of $\prod_{n \geq 1} \phi_n$ on $\D$.
        
        \noindent (b) $\Rightarrow$ (d) Suppose that product $\prod_{n \geq 1} \phi_n$ converges absolutely and locally uniformly on $\D$. 
        Since $f_n = T(\phi_n)$ and $f_n(0) \not = i$, Lemma \ref{Lemma:ProductPreserve} implies that $\prod_{n \geq 1} f_n$ converges absolutely on $\D$.
        The uniform continuity of $T$ on compact subsets of $\D$ ensures that this convergence is locally uniform.
        
        \noindent (d) $\Rightarrow$ (c) This implication is trivial.
        
        \noindent (c) $\Rightarrow$ (a) Suppose that $\prod_{n \geq 1} f_n(0)$ converges absolutely.  Since $f_n = T(\phi_n)$, we see from
        \eqref{eq:Tp1} that $(T \circ T \circ T)(f_n) = \phi_n$.  Three successive applications of Lemma \ref{Lemma:ProductPreserve} guarantee
        that $\prod_{n \geq 1} \phi_n(0)$ converges absolutely.
    \end{proof}
    
    The preceding theorem reduces the study of unilateral products \eqref{eq:Unilateral} of Cayley inner
    functions to the study of infinite products of inner functions.  This is well-understood territory.  Indeed, let
    \begin{equation}\label{eq:BOIF}
      \phi_n(z) = e^{i\theta_n} \left( z^{m_n}\prod_{k \geq 1} \frac{|z_k^{(n)}|}{z_k^{(n)}} \frac{ z_k^{(n)}-z}{1-\overline{z_k^{(n)}}z} \right)
      \exp\left(- \int_{\T} \frac{\zeta+z}{\zeta-z} \, d\mu_n(\zeta) \right)
    \end{equation}
    be a sequence of inner functions, where
    \begin{enumerate}
        \item  $\theta_n \in [- \pi, \pi)$;
        \item $m_n \in \mathbb{N}$;
        \item $z_1^{(n)}, z_2^{(n)},\ldots$ is a Blaschke sequence for each $n$;
        \item $\mu_n$ is a finite, non-negative, singular
        measure on $\T$.
    \end{enumerate}
    In what follows, we allow the possibility that some of the zero sequences 
    $$z_1^{(n)}, z_2^{(n)},\ldots$$ are finite.
    Since this does not affect our arguments in any significant way, except encumbering our notation, we proceed as if
    each such zero sequence is infinite.  The reader should have no difficulty in patching up the argument to handle the most
    general case.
    
    \begin{Lemma}\label{sdlfkjsodif}
      The product $\prod_{n \geq 1} \phi_n$ of the inner functions \eqref{eq:BOIF}
      converges absolutely and locally uniformly on $\D$ if and only if
      \begin{enumerate}
        \item $\sum_{n \geq 1} \theta_n$ converges;
        \item $\sum_{n \geq 1} m_n < \infty$;
        \item $\{ z_k^{(n)} : n,k \geq 1 \}$ forms a Blaschke sequence:  $\sum_{n,k \geq 1} (1 - |z_k^{(n)}|) < \infty$;
        \item $\sum_{n \geq 1} \mu_n(\T) < \infty$.
      \end{enumerate}
    \end{Lemma}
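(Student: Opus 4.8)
The plan is to convert the convergence of this functional product on all of $\D$ into a single numerical series, namely $\sum_{n\ge1}|1-\phi_n(0)|$, and then extract the four conditions by evaluating the factorization \eqref{eq:BOIF} at the origin. First I would establish the reduction: by Schwarz's lemma estimate \eqref{eq:InnerSchwarz}, on each disk $\{|z|\le r\}$ one has $|1-\phi_n(z)|\le\frac{1+r}{1-r}|1-\phi_n(0)|$, so $\sum_n|1-\phi_n(0)|<\infty$ already forces $\sum_n|1-\phi_n|$ to converge locally uniformly, i.e.\ $\prod_n\phi_n$ converges absolutely and locally uniformly, while the converse follows by restricting to $z=0$. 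Thus $\prod_n\phi_n$ converges absolutely and locally uniformly if and only if $\sum_{n\ge1}|1-\phi_n(0)|<\infty$. (This is essentially Lemma~\ref{Lemma:UnilateralConvergence}(a)$\Leftrightarrow$(b), and \eqref{eq:InnerSchwarz} requires no assumption on $\phi_n(0)$.)

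Next I evaluate \eqref{eq:BOIF} at the origin. The Blaschke factor contributes $\prod_k|z_k^{(n)}|$, the singular factor contributes $e^{-\mu_n(\T)}$, and the monomial contributes $0^{m_n}$, giving
\[ \phi_n(0)=e^{i\theta_n}\,0^{m_n}\Big(\prod_{k\ge1}|z_k^{(n)}|\Big)e^{-\mu_n(\T)}. \]
Since $\sum_n|1-\phi_n(0)|<\infty$ forces $\phi_n(0)\to1$, and $\phi_n(0)=0$ precisely when $m_n\ge1$, convergence requires $m_n=0$ for all large $n$; for non-negative integers this is exactly condition (b). The finitely many discarded factors $z^{m_n}$ do not affect convergence, so from here I assume $m_n\equiv0$ and write $\phi_n(0)=\rho_n e^{i\theta_n}$ with $\rho_n=\big(\prod_k|z_k^{(n)}|\big)e^{-\mu_n(\T)}\in(0,1]$.

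I then split modulus from argument via $|1-\rho_n e^{i\theta_n}|^2=(1-\rho_n)^2+2\rho_n(1-\cos\theta_n)$. Convergence forces $\rho_n\to1$ and $\theta_n\to0$, so along the tail $|1-\phi_n(0)|\asymp(1-\rho_n)+|\theta_n|$; hence $\sum_n|1-\phi_n(0)|<\infty$ decouples into $\sum_n(1-\rho_n)<\infty$ together with the convergence of the phase series $\sum_n\theta_n$, which is condition (a). For the modulus I pass to logarithms, $-\log\rho_n=\sum_{k\ge1}\big(-\log|z_k^{(n)}|\big)+\mu_n(\T)$, a sum of non-negative terms; using $1-x\asymp-\log x$ as $x\to1$ (valid for $\rho_n\to1$ and, for all but finitely many indices, for $|z_k^{(n)}|\to1$), the finiteness of $\sum_n(1-\rho_n)$ is equivalent to the simultaneous finiteness of $\sum_{n,k}(1-|z_k^{(n)}|)$ and $\sum_n\mu_n(\T)$, i.e.\ conditions (c) and (d).

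The step I expect to be most delicate is this final decoupling: I must argue that the two non-negative contributions to $-\log\rho_n$, the Blaschke mass $\sum_k(-\log|z_k^{(n)}|)$ and the singular mass $\mu_n(\T)$, may be summed over $n$ independently and compared term-by-term with $1-|z_k^{(n)}|$ and $\mu_n(\T)$ uniformly along the tail, while absorbing the finitely many zeros $z_k^{(n)}$ lying away from $\T$ (where the comparison $1-x\asymp-\log x$ degrades) into a harmless finite error. By contrast, the reduction to the origin in the first paragraph is the conceptual crux that renders the whole problem numerical rather than functional, and the modulus/argument split is routine once $\rho_n\to1$ and $\theta_n\to0$ are in hand.
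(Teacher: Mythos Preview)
Your proposal is correct and follows essentially the same route as the paper: reduce absolute locally uniform convergence of $\prod_n\phi_n$ to absolute convergence of $\prod_n\phi_n(0)$ via the Schwarz--Pick estimate \eqref{eq:InnerSchwarz} (this is Lemma~\ref{Lemma:UnilateralConvergence}(a)$\Leftrightarrow$(b)), evaluate the factorization at the origin, and decouple the modulus and argument contributions into conditions (a)--(d). Your write-up is in fact slightly more explicit than the paper's, which passes directly from absolute convergence of $\prod_n\phi_n(0)$ to absolute convergence of $\sum_n\big((\mu_n(\T)-i\theta_n)-\log\prod_k|z_k^{(n)}|\big)$ without spelling out the modulus/argument split; the ``delicate'' decoupling you flag is handled by the paper in one line and is indeed routine since all the terms $-\log|z_k^{(n)}|$ and $\mu_n(\T)$ are non-negative, so the double sum factors cleanly and the comparison $1-x\asymp-\log x$ only fails on finitely many indices.
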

    
    \begin{proof}
        The necessity of (b) is self evident, so we may assume that (b) holds and 
        that $\phi_n(0) \neq 0$ for $n \in \mathbb{N}$.  Lemma \ref{Lemma:UnilateralConvergence}
        says that $\prod_{n \geq 1} \phi_n$ converges absolutely and locally uniformly if and only if
        \begin{equation*}
            \prod_{n \geq 1} \left( e^{-\mu_n(\T) + i \theta_n} \prodk |z_k^{(n)}| \right)
        \end{equation*}
        converges absolutely.  This occurs if and only if
        \begin{equation*}
            \sum_{n \geq 1}  \left( (\mu_n(\T) - i \theta_n) - \log \prod_{k \geq 1} |z_k^{(n)}| \right)
        \end{equation*}
        converges absolutely.  The series above converges absolutely if and only if each of the series 
        $\sum_{n \geq 1} \theta_n$ and $\sum_{n \geq 1} \mu_n(\T)$ converge absolutely, and if
        \begin{equation*}
            \sum_{n \geq 1} \left( -\log \prodk |z_k^{(n)}| \right) = - \sum_{n,k \geq 1} \log |z_k^{(n)}|
        \end{equation*}
       converge absolutely (that is, if (a) and (d) hold).  
        However, $$\sum_{n,k \geq 1} \log |z_k^{(n)}|$$ converges if and only if (c) holds.
    \end{proof}

    Suppose that $\phi_n$ is a sequence of non-constant inner functions and $f_n = T(\phi_n)$.
    If $E_n = f_n^{-1}(-\infty,0)$, then Lemma \ref{Lemma:Cayley} implies that
    $f_n = |f_n(0)|f_{E_n}$.  Lemma \ref{Lemma:UnilateralConvergence} asserts that $\prod_{n \geq 1} f_n$ converges
    absolutely and locally uniformly in $\D$ if and only if
    \begin{equation*}
      \prod_{n \geq 1} f_n(0) = \prod_{n \geq 1}\left( |f_n(0)| e^{\frac{1}{2}i m(E_n)} \right) = 
    \exp\left(\frac{1}{2} i \sum_{n \geq 1} m(E_n) \right)  \prod_{n \geq 1} |f_n(0)| 
    \end{equation*}
    does.  Consequently, when considering unilateral products of Cayley inner functions, it
    suffices to consider products of the form
    $$\prod_{n \geq 1} f_{E_n}, \quad E_n \subseteq \T.$$ The following theorem
    addresses the convergence of such products.

    \begin{Theorem}\label{Theorem:UniE_n}
        If $E_n \subseteq \T $, then the following are equivalent:
        \begin{enumerate}
            \item $\sum_{n \geq 1} m(E_n) < \infty$;
            \item $v = \pi \sum_{n \geq 1} \chi_{E_n} \in L^1$;
            \item $\prod_{n \geq 1} f_{E_n}$ converges absolutely and locally uniformly on $\D$.
        \end{enumerate}
        The product in (c) belongs to $\RR^{+}$ if and only if $v \in L \log L$.
    \end{Theorem}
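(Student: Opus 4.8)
The plan is to dispatch the three equivalences by reduction to the lemmas already in hand, and then to treat the $\RR^+$ criterion, which is where the real work lies. For (a)$\iff$(b), since each $\chi_{E_n}$ is nonnegative, Tonelli's theorem gives
\[
\int_{\T} v\,dm = \pi\sum_{n\geq 1}\int_{\T}\chi_{E_n}\,dm = \pi\sum_{n\geq 1}m(E_n),
\]
so $v\in L^1$ exactly when $\sum_n m(E_n)<\infty$. For (a)$\iff$(c) I would invoke the reduction preceding the theorem: by Lemma \ref{Lemma:UnilateralConvergence}, $\prod_n f_{E_n}$ converges absolutely and locally uniformly on $\D$ if and only if $\prod_n f_{E_n}(0)$ converges absolutely, i.e.\ if and only if $\sum_n|1-f_{E_n}(0)|<\infty$. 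Lemma \ref{Lemma:Cayley}(c) gives $f_{E_n}(0)=e^{i\pi m(E_n)}$, and since $m(E_n)\in[0,1]$ we have $|1-e^{i\pi m(E_n)}|=2\sin(\tfrac{\pi}{2}m(E_n))$, which is bounded above and below by fixed positive multiples of $m(E_n)$ on $[0,1]$. Hence $\sum_n|1-f_{E_n}(0)|<\infty$ precisely when $\sum_n m(E_n)<\infty$. (If some $m(E_n)=\tfrac12$ then $\phi_{E_n}(0)=0$ and the lemma does not apply verbatim; but under either (a) or (c) one has $m(E_n)\to 0$, so only finitely many such factors occur and may be peeled off before applying the lemma to the tail.)

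For the final assertion I would first record the closed form of the limit. When the product converges, \eqref{eq:Herglotz} together with the $L^1$-continuity of the Herglotz integral (the density $\sum_{n\leq N}\chi_{E_n}$ increases to $v/\pi$ in $L^1$) shows that
\[
P:=\prod_{n\geq 1}f_{E_n}=\exp\!\left[\,i\int_{\T}\frac{\zeta+z}{\zeta-z}\,v(\zeta)\,dm(\zeta)\right]=\exp(i\pi U),\qquad U(z)=\int_{\T}\frac{\zeta+z}{\zeta-z}\,\frac{v(\zeta)}{\pi}\,dm(\zeta).
\]
Here $U=V+i\widetilde V$ with $V=\mathscr{P}(v/\pi)\geq 0$, so $U$ has nonnegative real part; passing to boundary values gives $\log|P|=-\widetilde v$ and $\arg P=v$ a.e.\ on $\T$. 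In particular $P$ is zero-free and, since $v\in\pi\Z$ a.e., real valued a.e.\ on $\T$.

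The crux is then the equivalence $P\in\RR^+\iff v\in L\log L$, which I would extract from the two-sided theorem of Zygmund that for nonnegative $v$ one has $\widetilde v\in L^1$ if and only if $v\in L\log L$. For necessity: if $P\in\RR^+\subseteq\N\setminus\{0\}$, then its boundary function is log-integrable, so $-\widetilde v=\log|P|\in L^1$, and Zygmund forces $v\in L\log L$. For sufficiency: if $v\in L\log L$ then $\widetilde v\in L^1$, so $U$ has nonnegative real part and $L^1$ boundary values; as a function with nonnegative real part it lies in $H^p$ for every $0<p<1$, and Smirnov's theorem \eqref{SmirnovThm} then upgrades this to $U\in H^1$. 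Consequently $U=\mathscr{P}(U^*)$, which yields $\widetilde V=\mathscr{P}(\widetilde v/\pi)$ on all of $\D$ and hence $\log|P|=-\mathscr{P}(\widetilde v)$ there. Comparing with the outer function $F(z)=\exp[-\int_{\T}\tfrac{\zeta+z}{\zeta-z}\widetilde v\,dm]$, whose log-modulus is also $-\mathscr{P}(\widetilde v)$, I find that $P/F$ is a zero-free analytic function of constant modulus on $\D$, hence a unimodular constant; thus $P$ is outer, and being real a.e.\ on $\T$ it belongs to $\RO\subseteq\RR^+$.

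I expect the sufficiency half of this last step to be the main obstacle. Log-integrability of the boundary values is only necessary, not sufficient, for membership in $N^+$, so one must rule out a singular inner factor in $P$ rather than merely matching boundary moduli. The device that accomplishes this is the promotion of $\widetilde v\in L^1$ to $U\in H^1$: this forces $\widetilde V$ to equal the genuine Poisson integral of its boundary data on all of $\D$, which identifies $P$ with an honest outer function and closes the argument.
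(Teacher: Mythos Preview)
Your argument is correct and follows the same route as the paper. For (a)$\iff$(b) and (a)$\iff$(c) you do exactly what the paper does: Tonelli for the first, and Lemma~\ref{Lemma:UnilateralConvergence} together with $f_{E_n}(0)=e^{i\pi m(E_n)}$ for the second (your explicit comparison $|1-e^{i\pi m(E_n)}|\asymp m(E_n)$ and your remark about finitely many indices with $\phi_{E_n}(0)=0$ are the natural way to make the paper's one-line appeal rigorous).

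For the final $\RR^+$ criterion the paper is extremely terse: it simply writes the limit as $\exp[\pi(-\widetilde v+iv)]$ and invokes Zygmund's theorem that, for $v\geq 0$, $\widetilde v\in L^1$ iff $v\in L\log L$, without justifying the intermediate equivalence ``$P\in\RR^+ \iff \widetilde v\in L^1$.'' You supply this bridge. Your necessity step (log-integrability of Nevanlinna boundary functions gives $\widetilde v\in L^1$) is standard; your sufficiency step---using that $U$ has nonnegative real part to place it in $H^p$ for $p<1$, then Smirnov to upgrade to $H^1$, then the Poisson representation to identify $\log|P|$ with $-\mathscr P(\widetilde v)$ and hence $P$ with an outer function up to a unimodular constant---is exactly the right way to exclude a hidden singular inner factor. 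This is more than the paper writes down, but it is the content implicit in its citation of Zygmund.
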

    
    \begin{proof}
        The equivalence of (a) and (b) is immediate.
        Lemma \ref{Lemma:UnilateralConvergence} tells us that (c) is equivalent
        to the convergence of 
        \begin{equation*}
            \prod_{n \geq 1} f_n(0) = \exp\left(\frac{1}{2}i \sum_{n \geq 1} m(E_n) \right),
        \end{equation*}
        which is equivalent to (a).  If the product in (c) converges to $$f = \exp[\pi(-\widetilde{v}+iv)],$$
        then $v \geq 0$ so the conjugate $\widetilde{v}$ belongs to $L^1$ if and only 
        if $v \in L \log L$ \cite[Thm.~4.4]{Duren} (Zygmund's theorem on the integrability of the conjugate function).
    \end{proof}

    \begin{Example}\label{Example:L1LogL}
        Suppose that $v \in L^1$ only assumes values in $\pi \mathbb{Z}$ and write
        $v = v^+ - v^-$, where $v^+$ and $v^-$ are non-negative $L^1$ functions.  
        Then two applications of Theorem \ref{Theorem:UniE_n} produce
        an analytic function $f$ on $\D$ that is real-valued a.e.~on $\T$ (in the sense
        of non-tangential limiting values) and that satisfies $\arg f = v$.  
        If both $v^+$ and $v^-$ belong to $L \log L$, then $f$ belongs to $\RR^+$.
    \end{Example}
    
    \begin{Example}
        Suppose that $E_1,E_2,\ldots$ are Lebesgue measurable subsets of 
        $\T$ and that $v = \pi \sum_{n \geq 1} \chi_{E_n}$ belongs to
        $L^1$ but not $L \log L$.  Theorem \ref{Theorem:UniE_n} yields  that
        $f = \prod_{n \geq 1} f_{E_n}$ is well defined.  However, Zygmund's Theorem
        asserts that $\widetilde{v} \not \in L^1$ and so $f = \exp[\pi(-\widetilde{v}+iv)]  \not \in N$.
    \end{Example}
    
    \begin{Example}
        Recall from Example \ref{Example:CayleyArc} that if $E$
        is the circular arc in $\T$, running counterclockwise from $e^{i\beta}$ to $e^{i\alpha}$
        and $0 < m(E) < \frac{1}{2}$, then $\phi_E$ is the single Blaschke factor with zero
        $$z_E = e^{i\frac{1}{2}(\alpha+\beta)} \tan\left(\frac{\pi}{2}\left[\frac{1}{2} - m(E)\right]\right).$$
        This provides a bijection $E \mapsto z_E$ between circular arcs $E$
        with $0 < m(E) < \frac{1}{2}$ and $\D \backslash \{0\}$.
        Suppose that 
        \begin{equation}\label{eq:BlaschkeExample}
          \prod_{n \geq 1} \frac{|z_n|}{z_n} \frac{z_n - z}{1 - \overline{z_n}z}
        \end{equation}
        is a Blaschke product.  For each zero $z_n$ there is a unique
        circular arc $E_n$ so that $z_n = z_{E_n}$.  The $n$th factor in the product 
        \eqref{eq:BlaschkeExample} is precisely $\phi_{E_n}$.  The above discussion shows that the product \eqref{eq:BlaschkeExample} converges absolutely and locally
        uniformly on $\D$ if and only if $\sum_{n \geq 1} m(E_n) < \infty$.  
        Consequently, the summability condition $\sum_{n \geq 1} m(E_n) < \infty$ must be equivalent to the Blaschke condition
        $\sum_{n \geq 1} (1 - |z_n|) < \infty$.  We can demonstrate this equivalence directly.
        Indeed, since
        \begin{equation*}
          |z_{E_n}| = \phi_{E_n}(0) = \tan[ \tfrac{\pi}{2}( \tfrac{1}{2} - m(E_n))],
        \end{equation*}
        and
        \begin{equation}
          \lim_{x\rightarrow 0} \frac{1 - \tan(\tfrac{\pi}{2}(\tfrac{1}{2} - x))}{x} = \pi,
        \end{equation}
        the limit comparison test shows that the series
        \begin{equation}
        \sum_{n \geq 1} m(E_n) \quad \text{and} \quad 
          \sum_{n \geq 1}(1 - |z_n|) = \sum_{n \geq 1} (1 - \tan(\tfrac{1}{4}(\pi - m(E_n))))
        \end{equation}
        converge or diverge together.
    \end{Example}
		
    \begin{Example}\label{Example:OpenUnion}
        If $E$ is an open subset of $\T $, then $E$ decomposes uniquely as the countable 
        union of disjoint circular arcs $E_n$.  Let us assume that there are infinitely
        many arcs involved in this decomposition.  At most one can have measure
        greater than $\frac{1}{2}$, so we may assume that $0 < m(E_n) < \frac{1}{2}$
        for $n \in \mathbb{N}$.
        Since $m(E) = \sum_{n \geq 1} m(E_n)$,
        Theorem \ref{Theorem:UniE_n} ensures that
        \begin{equation*}
            f_E = \prod_{n \geq 1} f_{E_n}.
        \end{equation*}
        Since each $E_n$ is an arc, Example \ref{Example:CayleyArc} tells us that
        $f_{E_n}$ is a linear fractional transformation. Thus $f_E$ is an infinite product
        of linear fractional transformations.
    \end{Example}
    
    \begin{Example}
        Suppose that $E \subseteq \T$ is a fat Cantor set, i.e., a Cantor set with positive Lebesgue measure.
        Then $\T \backslash E$ is an open set of positive measure, so Example \ref{Example:OpenUnion}
        shows us that $f_{\T\backslash E}$ is a product of linear fractional transformations.
        Since \eqref{eq:fEE1} implies that $f_E = -1 / f_{\T \backslash E}$, we conclude that
        $f_E$ is a product of linear fractional transformations.
    \end{Example}

    \begin{Example}
        Consider the atomic inner function
        \begin{equation*}
            \phi_{\rho}(z) = \exp \left( \rho\frac{z+1}{z-1} \right) , \quad \rho > 0.
        \end{equation*}
        Since $\phi_{\rho}(0) = e^{-\rho} > 0$, Lemma \ref{Lemma:Cayley} shows 
        that 
        $$\phi_{\rho} = \phi_{E(\rho)}, \quad E(\rho) = \phi_{\rho}^{-1}(\mathbb{L}).$$
        A computation shows that $z\in \T $ belongs to $E(\rho)$ if and only if $\rho\frac{z+1}{z-1}$ lies in one of the
        imaginary intervals $(\frac{\pi i}{2}, \frac{3 \pi i}{2}) \pmod{2\pi i n}$, where $n \in \mathbb{Z}$.  Since the
        linear fractional transformation $(z+1)/(z-1)$ is
        self-inverse, we see that $z$ belongs to $E(\rho)$ whenever $z$ lies in one of the 
        circular arcs $I_n(\rho)$ connecting the points
        \begin{equation}
            \frac{(4n+3)\pi - 2\rho i}{(4n+3)\pi +2\rho i} \quad\text{and}\quad \frac{(4n+1)\pi - 2\rho i}{(4n+1)\pi +2\rho i}.
        \end{equation}
        For $n \geq 0$, the arcs $I_n(\rho )$ lie on the bottom half of $\T $ and shrink rapidly, approaching the point $1$ as 
        $n\to \infty$.  The arcs $I_n(\rho )$ for $n < 0$ are the complex conjugates of the arcs $I_{n+1}(\rho)$ and lie on the upper
        half of $\T $.  The total measure of the arcs $I_n(\rho )$ can be computed with Lemma \ref{Lemma:Cayley}:
        \begin{equation*}
            \sum_{n\in \Z} m(I_n(\rho))
            =	m(E(\rho)) = \frac{1}{2} - \frac{2}{\pi}\tan^{-1} (e^{-\rho}). 
        \end{equation*}
        For any $\rho > 0$, Theorem \ref{Theorem:UniE_n} says that
        \begin{equation*}
            f_{E(\rho)} =    \prod_{n\in\Z} f_{I_n(\rho)}.
        \end{equation*}
        where the product converges either as a bilateral (i.e., symmetric) product or
        as two separate unilateral products.  Moreover, each term of the product
        is a linear fractional transformation (Example \ref{Example:CayleyArc}).
    \end{Example}

\section{Herglotz $A$-integral representations}
    We wish continue our work from the previous section to obtain an infinite product representation of an $f \in \RO$. 
    Recall that this starts by writing 
    \begin{equation*}
        f = |f(0)| \exp[ \pi (- \widetilde{v} + iv)],
    \end{equation*}
    where $v$ is integer valued.  Unlike the cases considered in the preceding section,
    here $v$ may be unbounded (above and below).  To handle this situation, we require
    a generalization of the classical Herglotz formula
    \begin{equation}\label{eq:HergInt}
        h(z) =  i \int_{\T} \frac{\zeta+z}{\zeta -z} \Im h(\zeta) dm(\zeta), \quad z \in \D,
    \end{equation}
    which holds when $h \in H^1$.  This formula permits us to recover an analytic
    function $h$ on $\D$ from the boundary values of its imaginary part.  
    
    This section is devoted
    to obtaining a suitable generalization of the Herglotz formula.  We will resume our discussion
    of product representations of $\RO$ functions in Section \ref{Section:Bilateral}.
    
\subsection{The $A$-integral}    
    Unfortunately, as hinted in Example \ref{Example:L1LogL}, we are not 
    always lucky enough to have $h \in H^1$.  The fact that $f \in N^+$ only ensures
    that $\widetilde{v} \in L^1$ while its harmonic conjugate $v$ need not belong to $L^1$. Thus we cannot
    immediately recover $h = - \widetilde{v} + iv$ from its imaginary part.
    Consequently, we need to develop a suitable replacement for \eqref{eq:HergInt}.  This is where the 
    $A$-integral comes in.
    
    If $h:\T\to\C$ is Lebesgue measurable, define
        \begin{equation*}
        \lambda_h(t) = m(\{ |h| > t\}), \quad t > 0,
    \end{equation*}
    to be the {\em distribution function} for $h$.   We say that $h$ belongs to $L_0^{1,\infty}$ if
    \begin{equation}\label{eq:L01}
         \lambda_h(t) = o\left(\frac{1}{t} \right).
    \end{equation}
    A short exercise will show that $L^1 \subset L_0^{1,\infty}$. A classical theorem of Kolmogorov \cite[p.~131]{MR2500010} says that if $w \in L^1$ then its harmonic conjugate $\widetilde{w}$ belongs to $L^{1, \infty}_{0}$. 

    \begin{Definition}
        $h:\T\to\C$ is \emph{$A$-integrable} if it belongs to $L_0^{1,\infty}$ and
        \begin{equation*}
            	\lim_{A\to \infty} \int_{ \{|h| \leq A \} } h \, dm
        \end{equation*}
        exists.  This limit is called the \emph{$A$-integral of $h$ over $\T$} and is denoted by 
        \begin{equation*}
            (A)\!\int_{\T}h \,dm.
        \end{equation*}
    \end{Definition}

 The theory of $A$-integrals was developed by Denjoy,
    Titchmarsh \cite{Tit}, Kolmogorov, Ul\textprime{}yanov \cite{Ulyanov},
    and Aleksandrov \cite{Alek-1981}. 
    
    An analytic function on $\D$ is said to belong to the space $H_0^{1,\infty}$ if it belongs to $N^{+}$ and its
    boundary function is in $L_0^{1,\infty}$.  Aleksandrov showed that such a function is the Cauchy $A$-integral of its
    boundary function \cite{Alek-1981}.  That is,  if $h$ is in $H_0^{1,\infty}$, then
    \begin{equation*}
    	h(z) = (A)\!\! \int_{\T} \frac{h(\zeta)}{1 - \overline{\zeta} z} dm(\zeta), \quad z \in \D.
    \end{equation*}
     A detailed proof of Aleksandrov's theorem can be found in \cite{MR2215991}.
    In order to establish an infinite product expansion for real outer functions, we require
    a Herglotz integral analogue of Aleksandrov's theorem.

\subsection{Herglotz $A$-integral representations}

    The following theorem draws heavily on the work of Aleksandrov \cite{Alek-1981}.
    The proof given below can be found in \cite{MR2021044}.      
		
	\begin{Theorem}\label{Theorem:AlexHerglotz}
		Let $h = u+iv \in H_0^{1,\infty}$ and $u(0)=0$.  For $A > 0$ let
		\begin{equation*}
					v_A = 	\begin{cases}
							v	&	\text{if $|v| \leq A$},	\\
							A	&	\text{if $v > A$},		\\
							-A	& 	\text{if $v < -A$}.
						\end{cases}
		\end{equation*}
		Then
		\begin{equation}\label{eq:HerglotzA}
				h(z) = \lim_{A\to\infty} i \int_{\T} 
					\frac{\zeta+z}{\zeta-z} v_A(\zeta) dm(\zeta),
		\end{equation}
		where the convergence is uniform on compact subsets of $\D$.
	\end{Theorem}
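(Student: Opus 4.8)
The plan is to deduce the statement from Aleksandrov's Cauchy $A$-integral representation, which is already available to us, by passing through the rotated function $g = -ih$. Since $h = u+iv$ with $u(0)=0$, we have $g = v - iu$, so $\Re g = v$ and $\Im g = -u = \widetilde{v}$ on $\T$, with $\Im g(0) = 0$ and $g(0) = v(0) \in \R$. Because $h \in N^{+}$, its real part $u$ (equivalently $\widetilde{v}$) is integrable: this is the $N^{+}$ fact recorded above, and it is the one piece of regularity not shared by $v$. The theorem then amounts to the Schwarz (Herglotz) $A$-integral recovery of $g$ from the truncations $v_A$ of its boundary real part, after which multiplication by $i$ returns $h$. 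The bridge to Aleksandrov is the kernel identity
\begin{equation*}
  \frac{\zeta + z}{\zeta - z} = \frac{2}{1 - \overline{\zeta} z} - 1, \qquad \zeta \in \T,\ z \in \D,
\end{equation*}
which converts the Herglotz kernel into the Cauchy kernel plus a constant.

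First I would replace the clipping $v_A$ by the sharp cutoff $v\chi_{\{|v| \le A\}}$: the two differ by at most $A$ on the set $\{|v|>A\}$, of measure $\lambda_v(A)$, so the resulting discrepancy in the integral is at most $\tfrac{1+|z|}{1-|z|}A\lambda_v(A)$, which tends to $0$ uniformly on compacta since $v \in L_0^{1,\infty}$ forces $A\lambda_v(A) \to 0$. Next, on $\{|v|\le A\}$ I would write $v = g - i\widetilde{v}$ and split the integral. The term carrying $\widetilde{v}$ is harmless: as $\widetilde{v} \in L^1$ and the kernel is bounded on compacta, the tail bound $\tfrac{1+|z|}{1-|z|}\int_{\{|v|>A\}}|\widetilde v|\,dm \to 0$ gives
\begin{equation*}
  \int_{\{|v| \le A\}} \frac{\zeta+z}{\zeta-z}\, \widetilde{v}\, dm \longrightarrow \int_{\T} \frac{\zeta+z}{\zeta-z}\, \widetilde{v}\, dm
\end{equation*}
uniformly on compacta. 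For the term carrying $g$ I would swap the cutoff $\{|v|\le A\}$ for $\{|g| \le A\}$; the error lives on $\{|v|\le A,\ |g|>A\}$, where $|g| \le A + |\widetilde{v}|$, and is thus at most $\tfrac{1}{1-|z|}\big(A\lambda_g(A) + \int_{\{|g|>A\}}|\widetilde{v}|\, dm\big) \to 0$, using $g \in L_0^{1,\infty}$ and absolute continuity of the integral of $\widetilde{v}$. On $\{|g|\le A\}$ the kernel identity together with Aleksandrov's theorem (applied to $g \in H_0^{1,\infty}$, and at $z=0$ for the constant term) yields
\begin{equation*}
  \int_{\{|g| \le A\}} \frac{\zeta+z}{\zeta-z}\, g\, dm = 2\int_{\{|g| \le A\}} \frac{g}{1-\overline{\zeta}z}\, dm - \int_{\{|g| \le A\}} g\, dm \longrightarrow 2 g(z) - g(0),
\end{equation*}
uniformly on compacta.

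It remains to identify the surviving $\widetilde{v}$-integral. I claim $i\int_{\T} \tfrac{\zeta+z}{\zeta-z}\widetilde{v}\, dm = g - g(0)$. Both sides are analytic on $\D$; the left side is the Schwarz integral of $\widetilde{v} \in L^1$, hence (being, up to a constant, a Cauchy-type transform of an $L^1$ function, as in the proof of Lemma \ref{Lemma:Cayley}) lies in $N^{+}$, and a direct boundary computation using $\widetilde{\widetilde v} = g(0) - v$ gives $\Re\big(i\int \tfrac{\zeta+z}{\zeta-z}\widetilde{v}\,dm\big) = v - g(0)$ and $\Im(\,\cdot\,) = \widetilde{v}$ a.e.\ on $\T$, which are exactly the boundary values of $g - g(0)$. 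Two $N^{+}$ functions with the same boundary values coincide, so the claim holds. Assembling the three limits gives $\int_{\{|v|\le A\}} \tfrac{\zeta+z}{\zeta-z} v\, dm \to \big(2g(z) - g(0)\big) - \big(g(z)-g(0)\big) = g(z)$ uniformly on compacta, and multiplying by $i$ produces $h = ig$, as claimed.

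The main obstacle is that truncation is performed with respect to $|v|$, whereas Aleksandrov's theorem truncates with respect to $|h| = |g|$, and the $A$-integral is not linear, so one cannot simply split $v = \tfrac12(g + \overline{g})$ under the limit. The device that defeats this is the integrability of $\widetilde{v} = \Im g$: it lets me peel off an honest $L^1$ piece handled by dominated convergence and, crucially, it bounds the mismatch between the two cutoff sets via $\int_{\{|g|>A\}}|\widetilde{v}|\to 0$. I would also be careful to invoke Aleksandrov's theorem in its locally uniform form, since the crude bound $\tfrac{1+|z|}{1-|z|}\|v_A\|_1$ need not stay bounded as $A\to\infty$ and hence does not by itself deliver the uniform-on-compacta conclusion.
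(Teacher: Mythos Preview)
Your argument hinges on the claim that $h\in N^{+}$ (or $h\in H_0^{1,\infty}$) forces $u=\Re h\in L^1$; you then peel off the piece $i\widetilde v=-iu$ using dominated convergence and use $\int_{\{|g|>A\}}|\widetilde v|\,dm\to 0$ to swap cutoffs. That claim is false, and it is a misreading of the sentence preceding the theorem. There the paper says ``the fact that $f\in N^{+}$ only ensures that $\widetilde v\in L^1$,'' but $f$ is the \emph{outer} function $\exp(\pi h)$, and what is being used is $\log|f|\in L^1$, i.e.\ $\widetilde v\in L^1$ in that particular application. The theorem itself assumes only $h\in H_0^{1,\infty}$ and $u(0)=0$, and this does \emph{not} yield $u\in L^1$: take any outer $F$ with $|F|=w\in L_0^{1,\infty}\setminus L^1$ (for example $w(e^{i\theta})\sim (|\theta|\log(1/|\theta|))^{-1}$ near $\theta=0$); then $F\in H_0^{1,\infty}$, and since $|\Re F|+|\Im F|\asymp |F|=w\notin L^1$, a unimodular rotation of $F$ (followed by subtracting a real constant to normalize $u(0)=0$) produces $h\in H_0^{1,\infty}$ with $u\notin L^1$. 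For such $h$ your Step~3, your cutoff-swap in Step~4, and your identification of the Schwarz integral of $\widetilde v$ all break down simultaneously.

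The paper's route avoids this entirely. Instead of splitting $v=g-i\widetilde v$ and relying on $L^1$ control of $\widetilde v$, it proves an ``anti-analytic'' companion to Aleksandrov's Cauchy $A$-integral: $\displaystyle\lim_{A\to\infty}\int_{|h|\le A}\frac{h(\zeta)}{1-\zeta\overline z}\,dm(\zeta)=0$ locally uniformly (Lemma~\ref{Lemma:HFS3}). Conjugating and subtracting from Lemma~\ref{Lemma:HFS2} gives $h(z)=\lim_{A}\int_{|h|\le A}\frac{2iv}{1-\overline\zeta z}\,dm$; a further subtraction of the scalar $A$-integral of $v$ (which tends to $0$ by Lemma~\ref{Lemma:HFS1}) converts the Cauchy kernel to the Herglotz kernel. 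Only then are the cutoffs $\{|h|\le A\}$, $\{|v|\le A\}$, and the clipping $v_A$ interchanged, each swap costing at most $\tfrac{1+|z|}{1-|z|}A\lambda_h(A)$ or $\tfrac{1+|z|}{1-|z|}A\lambda_v(A)$---estimates that use only $L_0^{1,\infty}$ membership. Your approach would be a legitimate (and slightly shorter) alternative \emph{if} $u\in L^1$ were part of the hypotheses, which suffices for the application in Section~\ref{Section:Bilateral}; but as a proof of the theorem as stated, the missing ingredient is precisely the anti-analytic Lemma~\ref{Lemma:HFS3}.
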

	
    Suppose that $h = u +iv$ belongs to $H_0^{1,\infty}$ and $u(0)=0$.
    Without loss of generality, we may assume that $v(0) = 0$, that is, $h(0) = 0$.
    Indeed, a short argument shows that we may replace $v$ with $v - v(0)$
    in the definition of $v_A$.  The fact that the Herglotz integral of the constant
    function $v(0)$ is itself allows this reduction to go through.
    
    For $t > 0$ let
    \begin{equation*}
        \rho_h(t)	= t \lambda_h(t).	
    \end{equation*}
    Since $h \in H_0^{1,\infty}$, its boundary function belongs to $L_0^{1,\infty}$, so \eqref{eq:L01}
    ensures that $\rho_h \to 0$ as $t \to \infty$.  For $A > 0$, let
    \begin{equation}\label{eq:SigmaH}
        \sigma_h(A) = \displaystyle{\sup_{t \geq A} \rho_h(t)}
    \end{equation}
    and observe that $\sigma_h(A) \to 0$ (as $A \to \infty$) as well.
    
    The proof of Theorem \ref{Theorem:AlexHerglotz} requires the following three technical lemmas.

\begin{Lemma}\label{Lemma:HFS1}
    For $A > 0$ and $h \in H^{1, \infty}_{0}$,
    \begin{equation}\label{eq:HFSS1}
        \left| \displaystyle{\int_{|h|\leq A}} h\,dm   \right| \leq \rho_h(A) + 2 \sqrt{\sigma_h(0) \sigma_h(A)}.
    \end{equation}
\end{Lemma}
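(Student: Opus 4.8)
The plan is to exploit the analyticity of $h$, since a bound based on $|h|$ alone cannot work. Indeed, a layer-cake computation gives
\[
\int_{\{|h|\le A\}}|h|\,dm=\int_0^A\lambda_h(t)\,dt-\rho_h(A),
\]
and because $h$ need not lie in $L^1$ the right-hand side diverges as $A\to\infty$, whereas the bound \eqref{eq:HFSS1} tends to $0$. Thus the triangle inequality is hopeless and cancellation is essential. Recalling the running normalization $h(0)=0$, the mean value property $\int_{\T}h(r\zeta)\,dm(\zeta)=h(0)=0$ (valid for every $r<1$ since $h\in N^{+}$ is analytic) lets me replace the integral of $h$ over $\{|h|\le A\}$ by minus its integral over the tail $\{|h|>A\}$. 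Working first at radius $r<1$, where $h(r\,\cdot)$ is bounded and all the integrals involved converge, keeps every manipulation legitimate; the boundary statement then follows by letting $r\to1^{-}$.

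From here there are two quantitative ingredients. First, integrating the distribution function by parts isolates the boundary term $A\lambda_h(A)=\rho_h(A)$, which accounts for the first summand in \eqref{eq:HFSS1}; here I use that $|h|$ is the modulus of an analytic function, hence log-subharmonic, so that the distribution function of $h(r\,\cdot)$ is dominated by that of the boundary function uniformly in $r$. Second, for the remaining tail contribution I split at an auxiliary level $B\in(0,A]$ and estimate the two resulting pieces by Cauchy--Schwarz, bounding one factor in terms of $\sigma_h(0)$ and the other in terms of $\sigma_h(A)$. This yields two competing quantities, one increasing and one decreasing in $B$, whose product is $\sigma_h(0)\,\sigma_h(A)$; minimizing over $B$ via the arithmetic--geometric mean inequality then produces exactly $2\sqrt{\sigma_h(0)\,\sigma_h(A)}$, the second summand in \eqref{eq:HFSS1}.

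The hard part will be the tail cancellation. A crude triangle-inequality bound on $\int_{\{|h|>A\}}h\,dm$ diverges logarithmically (the integral $\int_A^{\infty}\lambda_h(t)\,dt$ need not converge), so the entire point is to convert the cancellation furnished by analyticity into the finite, sharp bound $2\sqrt{\sigma_h(0)\,\sigma_h(A)}$. Concretely, the delicate steps are (i) showing that the cancellation in the tail is strong enough, uniformly as $r\to1^{-}$, to survive removal of the truncating radius, and (ii) calibrating the split at level $B$ together with the Cauchy--Schwarz estimate so that the constant is precisely $2$ rather than something larger. The layer-cake bookkeeping and the $\rho_h(A)$ term, by contrast, should be routine once the normalization $h(0)=0$ and the subharmonic domination of the distribution functions are in place.
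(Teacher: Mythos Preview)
Your plan has a genuine gap precisely at the step you flag as delicate. After writing $\int_{\{|h_r|\le A\}}h_r\,dm=-\int_{\{|h_r|>A\}}h_r\,dm$ via $h(0)=0$, you propose to control the tail by splitting at a level $B$ and applying Cauchy--Schwarz followed by AM--GM. But any such estimate is a bound on the tail in terms of $|h_r|$ alone; no cancellation from analyticity enters after the mean-value step. You yourself note that modulus bounds on the tail diverge when $h\notin L^1$, and a level split cannot cure this: one of the two pieces must still carry $\int|h_r|\,dm$ over an unbounded-level set, and that integral blows up as $r\to1^{-}$. No pair $X(B),Y(B)$ with product $\sigma_h(0)\sigma_h(A)$ arises from such a split, because the split only rearranges quantities comparable to $\int_{\{|h_r|>A\}}|h_r|\,dm$. (Incidentally, your auxiliary claim that $\lambda_{h(r\cdot)}\le\lambda_h$ via log-subharmonicity is also false: subharmonicity gives monotone $L^p$ means, not distributional domination. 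Try $h(z)=z+z^2$ at small $r$ and $t=r/2$.)

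The missing idea, which the paper uses, is to multiply $h$ by an analytic corrector rather than approximate radially. Let $g$ be the outer function with $g(0)>0$, $|g|=1$ on $\{|h|\le A\}$, and $|g|=A/|h|$ on $\{|h|>A\}$. Then $gh\in N^{+}$ with $|gh|\le A$ a.e., so $gh\in H^{\infty}$ and $\int_{\T}gh\,dm=(gh)(0)=0$ directly on the boundary; no passage $r\to1^{-}$ is needed. This gives
\[
\int_{\{|h|\le A\}}h\,dm=-\int_{\{|h|>A\}}gh\,dm+\int_{\{|h|\le A\}}(1-g)h\,dm.
\]
The first integral has modulus at most $A\lambda_h(A)=\rho_h(A)$ since $|gh|=A$ on the tail. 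Cauchy--Schwarz is applied to the \emph{second} integral: one factor is $\big(\int_{\{|h|\le A\}}|h|^2\,dm\big)^{1/2}\le(2\sigma_h(0)A)^{1/2}$, the other is $\big(\int_{\{|h|\le A\}}|1-g|^2\,dm\big)^{1/2}$, controlled via $|1-g|\le|\widetilde{\log|g|}|$ on $\{|h|\le A\}$ (where $\log|g|=0$) together with the $L^2$-boundedness of conjugation, yielding the bound $(2\sigma_h(A)/A)^{1/2}$. The $A$'s cancel and one obtains $2\sqrt{\sigma_h(0)\sigma_h(A)}$ with no free parameter to optimize. The cancellation is thus exploited through the holomorphic multiplier $g$, not through the raw tail of $h$.
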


\begin{proof}
    Let $A > 0$ and let $g$ be the outer function that satisfies
    \begin{enumerate}
        \item $g(0) > 0$;
        \item $|g| = 1$ on $\{ |h| \leq A \} \subseteq \T$;
        \item $|g| = A/|h|$ on $\{ |h| > A \}\subseteq \T$.
    \end{enumerate}
    By construction, the analytic function $gh$ vanishes at the origin
    and satisfies $|gh| \leq A$ a.e.~on $\T$.
    Consequently,
    \begin{align*}
        0
        &=\int_{\T} gh \,dm = \int_{|h| \leq A} gh \,dm + \int_{|h| > A} gh \,dm  \\
        &= \int_{|h| \leq A} h \,dm  + \int_{|h| \leq A} (g-1)h \,dm + \int_{|h| > A} gh \,dm ,
    \end{align*}
    and so
    \begin{equation}\label{weifjkndfs10097}
        \int_{|h| \leq A} h\,dm 
        =  - \underbrace{\int_{|h| > A} gh \, dm }_{I_1(A)} + \underbrace{\int_{|h|\leq A} (1-g)h\,dm}_{I_2(A)} .
    \end{equation}
    The definition of $g$ implies that
    \begin{equation*}
        I_1(A) \leq \int_{|h|> A} |gh|\,dm = A \int_{|h|> A} \,dm = A\lambda_h(A) = \rho_h(A),
    \end{equation*}
    which yields the first term on the right-hand side of \eqref{eq:HFSS1}.

    To estimate $I_2(A)$ from \eqref{weifjkndfs10097}, we first use the Cauchy--Schwarz inequality:
    \begin{equation}\label{eq:I1CSMFS}
        |I_2(A)|^2 \leq \left( \int_{|h|\leq A} |h|^2 \, dm \right)\left( \int_{|h|\leq A} |1-g|^2 \, dm \right).
    \end{equation}
    By the distributional identity (see \cite[p. 50 - 51]{MR2215991}) and \eqref{eq:SigmaH} we obtain 
    \begin{equation}\label{eq:HCIBTSWES1}
        \int_{|h|\leq A} |h|^2 \, dm = 2 \int_0^A t\lambda_h(t)\, dt \leq 2 \sigma_h(0)A.
    \end{equation}
    This already provides one of the terms required for \eqref{eq:HFSS1}.
    The second integral in \eqref{eq:I1CSMFS} is more troublesome.
    Let $w = \log|g|$, so that
    \begin{equation*}
        g = \exp(w + i\widetilde{w}).
    \end{equation*}
    The definition of $g$ says that $w \equiv 0$ on the set $\{ |h| \leq A \}$ and so
    \begin{equation*}
        |1-g| = |1 - e^{i\widetilde{w}}| \leq |\widetilde{w}|.
    \end{equation*}
    Using the fact that the $L^2$ norm of $\widetilde{w}$ is dominated by that of $w$ (see \eqref{sdiyfwr9efhudjk}), we find that
    \begin{align*}
        \int_{|h|\leq A} |1-g|^2\,dm	
        &\leq	\int_{\T} |\widetilde{w}|^2\,dm				
        \leq	\int_{\T} |w|^2\,dm	\\
        &= \int_{\T} (\log |g|)^2 \,dm 
        =	\int_{|h| > A} \left( \log \frac{|h|}{A} \right)^2 \, dm	\\
        &=	- \int_A^{\infty} \left(\log \frac{t}{A} \right)^2 \, d\lambda_h(t).
    \end{align*}
    Integrating by parts leads to
    \begin{align}
        2\int_A^{\infty} \frac{\lambda_h(t) \log\frac{t}{A}}{t} \, dt
        &\leq 	2 \sigma_h(A) \int_A^{\infty} \frac{\log \frac{t}{A}}{t^2} \, dt	\nonumber \\
        &=	\frac{2 \sigma_h(A)}{A} \int_1^{\infty} \frac{ \log s}{s^2} \, ds	\nonumber \\
        &=	\frac{2 \sigma_h(A)}{A}.\label{eq:HCIBTSWES2}
    \end{align}
    Returning to \eqref{eq:I1CSMFS} with the bounds \eqref{eq:HCIBTSWES1}
    and \eqref{eq:HCIBTSWES2} we obtain
    \begin{equation*}
        |I_2(A)| \leq 2 \sqrt{\sigma_h(0) \sigma_h(A)}.
    \end{equation*}
    This completes the proof of the lemma.
\end{proof}

    \begin{Lemma}\label{Lemma:HFS2}
    For $h \in H^{1, \infty}_{0}$,
        \begin{equation*} 
            h(z) = \lim_{A\rightarrow \infty} \int_{|h|\leq A} \frac{h(\zeta)}{1 - \overline{\zeta}z} dm(\zeta)
        \end{equation*}
        uniformly on compact subsets of $\D$.
    \end{Lemma}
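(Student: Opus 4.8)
The plan is to deduce Lemma~\ref{Lemma:HFS2} from Lemma~\ref{Lemma:HFS1} by folding the Cauchy kernel into the auxiliary analytic function, rather than attacking the truncated Cauchy transform head on. Fix $z\in\D$ and recall that on $\T$ one has $(1-\overline{\zeta}z)^{-1}=\zeta/(\zeta-z)$, together with the reproducing identity $\int_{\T}(1-\overline{\zeta}z)^{-1}\,dm(\zeta)=1$. Subtracting $h(z)$ times this identity and splitting the domain of integration gives
\begin{equation*}
\int_{|h|\leq A}\frac{h(\zeta)}{1-\overline{\zeta}z}\,dm(\zeta)-h(z)=\int_{|h|\leq A}\frac{h(\zeta)-h(z)}{1-\overline{\zeta}z}\,dm(\zeta)-h(z)\int_{|h|>A}\frac{dm(\zeta)}{1-\overline{\zeta}z}.
\end{equation*}
The last term is bounded by $(1-|z|)^{-1}|h(z)|\,\lambda_h(A)$, which tends to $0$ since $h\in L^{1,\infty}_0$. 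So everything reduces to showing that the middle integral tends to $0$.

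To that end I would introduce the analytic function
\begin{equation*}
g_z(w)=\frac{w\,(h(w)-h(z))}{w-z},
\end{equation*}
whose numerator vanishes at $w=z$, so the singularity there is removable. Writing $w-z=-b_z(w)(1-\overline{z}w)$, where $b_z$ is the Blaschke factor vanishing at $z$ and $1-\overline{z}w$ is outer, and noting that the $N^+$ function $h(w)-h(z)$ carries $b_z$ as an inner factor, one checks that $g_z\in N^+$. Its boundary modulus $|g_z(\zeta)|=|h(\zeta)-h(z)|/|\zeta-z|$ is comparable to $|h(\zeta)|$ for large $|h(\zeta)|$, so $g_z\in H^{1,\infty}_0$. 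Crucially $g_z(0)=0$, and on $\T$ we have $g_z=(h-h(z))/(1-\overline{\zeta}z)$, so the middle integral above equals $\int_{|h|\leq A}g_z\,dm$.

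Now I would re-run the proof of Lemma~\ref{Lemma:HFS1}, but keeping the outer comparison function $g$ tied to $h$ rather than to $g_z$: let $g$ be outer with $|g|=1$ on $\{|h|\leq A\}$ and $|g|=A/|h|$ on $\{|h|>A\}$, exactly as before, so $\|g\|_{\infty}\leq1$. Then $gg_z\in N^+$ has boundary modulus bounded by a constant multiple of $A$, hence $gg_z\in H^\infty$ by Smirnov's theorem \eqref{SmirnovThm}; since $g_z(0)=0$, we get $\int_{\T}gg_z\,dm=(gg_z)(0)=0$. Splitting this identity across the two sets yields
\begin{equation*}
\int_{|h|\leq A}g_z\,dm=-\int_{|h|>A}gg_z\,dm+\int_{|h|\leq A}(1-g)g_z\,dm,
\end{equation*}
in perfect analogy with \eqref{weifjkndfs10097}. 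The first term is $O((1-|z|)^{-1}(\rho_h(A)+|h(z)|\lambda_h(A)))$; for the second I would apply Cauchy--Schwarz and reuse \eqref{eq:HCIBTSWES1} and \eqref{eq:HCIBTSWES2}, since $\int_{|h|\leq A}|g_z|^2\,dm\leq 2(1-|z|)^{-2}(2\sigma_h(0)A+|h(z)|^2)$ while $\int_{|h|\leq A}|1-g|^2\,dm\leq 2\sigma_h(A)/A$, so the product is $O(\sigma_h(A))$. Both terms vanish as $A\to\infty$.

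The main obstacle — and the reason one cannot simply invoke Lemma~\ref{Lemma:HFS1} for $g_z$ as a black box — is the mismatch between the truncation set $\{|h|\leq A\}$ forced by the $A$-integral and the cutoff $\{|g_z|\leq A\}$ natural to $g_z$. Anchoring $g$ to $h$ rather than to $g_z$ circumvents this, at the cost of carrying the extra factor $1/|\zeta-z|$ and the constant $h(z)$ through every estimate. All resulting constants depend on $z$ only through $(1-|z|)^{-1}$ and $|h(z)|$, both bounded on any compact disk $\{|z|\leq r\}\subset\D$, whereas $\rho_h(A),\sigma_h(A),\lambda_h(A)\to0$ independently of $z$; this delivers convergence uniform on compact subsets, as claimed.
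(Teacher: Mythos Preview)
Your proof is correct and rests on the same skeleton as the paper's: introduce the analytic difference quotient $g_z(w)=w(h(w)-h(z))/(w-z)$ (the paper calls it $h_z$), observe that it vanishes at the origin, and show that $\int_{|h|\leq A}g_z\,dm\to 0$ via the outer-multiplier trick of Lemma~\ref{Lemma:HFS1}. The one genuine technical difference is how you handle the mismatch between the truncation set $\{|h|\leq A\}$ and the set $\{|g_z|\leq A\}$ natural to $g_z$. The paper applies Lemma~\ref{Lemma:HFS1} to $h_z$ as a black box (which controls $\int_{|h_z|\leq A}h_z\,dm$), then spends a second step comparing $\lambda_{h_z}$ to $\lambda_h$ via the Schwarz--Pick inequality \eqref{eq:More72} in order to pass from $\{|h_z|\leq A\}$ to $\{|h|\leq A_r\}$. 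You instead reopen the proof of Lemma~\ref{Lemma:HFS1} and anchor the outer comparison function $g$ to $h$ rather than to $g_z$, so that the identity $\int_{\T}gg_z\,dm=0$ lands you directly on $\{|h|\leq A\}$ with no bridging needed. Your route is a bit more economical and makes the uniformity in $z$ transparent (everything depends on $z$ only through $(1-|z|)^{-1}$ and $|h(z)|$); the paper's route has the virtue of using Lemma~\ref{Lemma:HFS1} verbatim. Both are valid and the estimates you quote from \eqref{eq:HCIBTSWES1} and \eqref{eq:HCIBTSWES2} carry over unchanged because your $g$ is exactly the outer function built there.
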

    
    \begin{proof}
        For $z \in \D$ and $\zeta \in \T$, let
        \begin{equation}\label{eq:Step2.hz}
            h_z(\zeta) = \frac{\zeta(h(\zeta)-h(z))}{\zeta - z}
        \end{equation}
        and observe that
        \begin{align*}
            \int_{|h|\leq A} h_z\,dm 
            &=  \int_{|h|\leq A} \frac{\zeta h(\zeta)}{\zeta - z} dm(\zeta)
              - \int_{|h|\leq A} \frac{\zeta h(z)}{\zeta - z} dm(\zeta) \\
            &=  \int_{|h|\leq A} \frac{h(\zeta)}{1 - \overline{\zeta} z} dm(\zeta)
              - \int_{|h|\leq A} \frac{h(z)}{1 - \overline{\zeta} z} dm(\zeta).
        \end{align*}
        Lemma \ref{Lemma:HFS1} tells us that
        \begin{equation}\label{eq:Step2.1}
        	\left| \int_{|h_z|\leq A} h_z \, dm \right| \leq 2 \sqrt{\sigma_{h_z}(0)\sigma_{h_z}(A)} + \rho_{h_z}(A).
        \end{equation}
        We claim that the right-hand side of the preceding tends to $0$
        uniformly on compact subsets of $\D$ as $A \to \infty$.
        
        For each $r \in (0,1)$ let
        \begin{equation*}
            M_r = \max_{|z| \leq r} |h(z)|.
        \end{equation*}
        For $|z| \leq r$ and $\zeta \in \T$, we read from \eqref{eq:Step2.hz} that
        \begin{equation}\label{eq:More72}
            |h_z(\zeta)| \leq \frac{|h(\zeta)| + |h(z)|}{1-|z|} \leq \frac{|h(\zeta)| + M_r}{1-r},
        \end{equation}
        which implies that
        \begin{equation}\label{eq:HFSAIFG1}
            \lambda_{h_z}(t) \leq \lambda_h\big( (1-r)t - M_r \big).
        \end{equation}
        If $t > 2M_r /(1-r)$, then
        \begin{equation*}
            t < \frac{2}{1-r} \big( (1-r)t - M_r \big).
        \end{equation*}
        Multiplying \eqref{eq:HFSAIFG1} by the preceding we obtain
        \begin{equation}\label{eq:HFSAIFG2}
            t\lambda_{h_z}(t) \leq \frac{2}{1-r} \big( (1-r)t - M_r \big) \lambda_h\big( (1-r)t - M_r \big).
        \end{equation}
        Therefore,
\begin{equation}\label{519}
\sigma_{h_z}(t) \leq \frac{2}{1-r} \sigma_{h}((1-r)t-M_r). 
        \end{equation}       
         Now suppose that
        \begin{equation}\label{eq:OMG2r}
            \frac{2M_r}{1-r} \leq A,
        \end{equation}
        so that the inequality \eqref{519} is valid for $t = A$.  Then
        \begin{equation*}
            \frac{(1-r)A}{2} \leq (1-r)A - M_r,
        \end{equation*}
        so that 
        \begin{equation*}
        \rho_{h_z}(A) \leq \sigma_{h_z}(A) \leq \frac{2}{1-r} 
        	\sigma_h\left( \frac{(1-r)A}{2}\right)
        \end{equation*}
        and
               \begin{equation*}
            \sigma_{h_z}(0) \leq \frac {2}{1-r} \max\{M_r,\sigma_h(M_r)\}.
        \end{equation*}
        The preceding two estimates show that the right-hand side of \eqref{eq:Step2.1}
        tends to zero uniformly on $|z| \leq r$ as $A \to \infty$.  This proves our claim.
        
        For $A$ that satisfy \eqref{eq:OMG2r}, we let
        \begin{equation*}
            A_r = (1-r)A - M_r.
        \end{equation*}
        We claim that the difference between 
        \begin{equation}\label{eq:D2Int}
        	\int_{|h_z| \leq A} h_z \,dm	\quad \text{and}\quad	\int_{|h|\leq A_r} h_z \, dm
        \end{equation}
        tends to $0$ uniformly on $|z| \leq r$ as $A \to \infty$.
        If $|z| \leq r$, then \eqref{eq:More72} shows that
        \begin{equation*}
            \{ |h| \leq A_r \} \subseteq \{ |h_z| \leq A \}.
        \end{equation*}
        Consequently, the difference between the two integrals in \eqref{eq:D2Int}
        is bounded in absolute value by 
        \begin{equation*}
            (1-r)^{-1}A \lambda_h(A_r),
        \end{equation*}
        which, in turn, is bounded by
        \begin{equation*}
            \left( \frac{A}{(1-r)A_r} \right) \rho_h(A_r).
        \end{equation*}
        Since $A/ A_r$ remains bounded as $A \to \infty$, the preceding
        tends to $0$, as desired.
        
        We showed that 
        \begin{equation*}
            \int_{|h| \leq A} h_z \,dm \to 0
        \end{equation*}
        uniformly on $|z| \leq r$ for each $r$ in $(0,1)$; this was our claim above.
        Now observe that the difference between $h(z)$ and
        \begin{equation*}
             \int_{|h| \leq A} \frac{h(z)}{1 - \overline{\zeta} z} dm(\zeta)
        \end{equation*}
        is bounded in absolute value by
        \begin{equation*}
            (1-|z|)^{-1}|h(z)|\lambda_h(A)
        \end{equation*}
        and hence tends to zero uniformly on $|z| \leq r$.
        This concludes the proof of the lemma.
    \end{proof}

    \begin{Lemma}\label{Lemma:HFS3}
        \begin{equation*}
            \lim_{A \to \infty} \int_{|h|\leq A} \frac{h(\zeta)}{1 - \zeta \overline{z}} dm(\zeta) = 0
        \end{equation*}
        uniformly on compact subsets of $\D$.
    \end{Lemma}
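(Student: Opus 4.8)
The plan is to run exactly the truncation argument of Lemma \ref{Lemma:HFS1}, but with the Cauchy kernel replaced by the \emph{reflected} kernel $G_z(\zeta) = (1 - \zeta\overline{z})^{-1}$. Two features of $G_z$ make this work. First, for fixed $z \in \D$ the map $\zeta \mapsto G_z(\zeta)$ is analytic on a disk of radius $|z|^{-1} > 1$, since its only pole sits at $\zeta = 1/\overline{z}$, outside $\overline{\D}$; hence $G_z \in H^{\infty}$ with $G_z(0) = 1$. Second, it is uniformly bounded on compacta: on $\T$ one has $|1 - \zeta\overline{z}| \geq 1 - |z|$, so $|G_z(\zeta)| \leq (1-r)^{-1}$ whenever $|z| \leq r$.

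For each $A > 0$ I would borrow the outer function $g = g_A$ built in the proof of Lemma \ref{Lemma:HFS1}, which satisfies $g(0) > 0$, $|g| = 1$ on $\{|h| \leq A\}$, and $|g| = A/|h|$ on $\{|h| > A\}$, so that $gh \in N^{+}$ is bounded by $A$ on $\T$. By Smirnov's theorem \eqref{SmirnovThm}, $gh \in H^{\infty}$, and since $h(0) = 0$ we have $(gh)(0) = 0$. Multiplying by $G_z$ keeps us in $H^{\infty} \subseteq H^1$, so the mean value property gives
\begin{equation*}
    \int_{\T} G_z \, gh \, dm = (G_z \, gh)(0) = G_z(0)\,(gh)(0) = 0.
\end{equation*}
Splitting this identity across $\{|h| \leq A\}$ and $\{|h| > A\}$, then writing $g = 1 + (g-1)$ on the first set, rearranges to
\begin{equation*}
    \int_{|h| \leq A} \frac{h(\zeta)}{1 - \zeta\overline{z}} \, dm(\zeta)
    = - \int_{|h| > A} G_z \, gh \, dm - \int_{|h| \leq A} G_z\,(g-1)\, h \, dm.
\end{equation*}

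It then remains only to estimate the two terms on the right, uniformly on $|z| \leq r$. For the first, $|gh| = A$ on $\{|h| > A\}$ together with the bound on $G_z$ gives a contribution of size at most $(1-r)^{-1} A \lambda_h(A) = (1-r)^{-1}\rho_h(A)$. For the second, pulling out $\|G_z\|_{\infty} \leq (1-r)^{-1}$ and applying Cauchy--Schwarz exactly as in Lemma \ref{Lemma:HFS1} --- via $\int_{|h| \leq A} |h|^2 \, dm \leq 2\sigma_h(0)A$ and $\int_{|h|\leq A} |1-g|^2 \, dm \leq 2\sigma_h(A)/A$ --- bounds it by $2(1-r)^{-1}\sqrt{\sigma_h(0)\sigma_h(A)}$. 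Since $\rho_h(A) \to 0$ and $\sigma_h(A) \to 0$ as $A \to \infty$, both terms vanish uniformly for $|z| \leq r$, which is the assertion. The only genuine subtlety is conceptual rather than computational: because $h$ need not lie in $L^1$, the ``integral of $h$ against $G_z$'' has no \emph{a priori} meaning, and the whole purpose of the auxiliary outer function $g$ is to manufacture a bona fide $H^1$ function $G_z \, gh$ whose integral we can evaluate at the origin and from which the truncation can then be peeled off. Once that device is installed, the remaining estimates are verbatim those already proved in Lemma \ref{Lemma:HFS1}.
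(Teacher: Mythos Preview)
Your argument is correct. It is, at bottom, the same mechanism the paper uses --- the outer truncation $g = g_A$ from Lemma~\ref{Lemma:HFS1}, the vanishing of the mean of an $H^1$ function with a zero at the origin, and the two estimates $\int_{|h|\leq A}|h|^2\,dm \leq 2\sigma_h(0)A$ and $\int_{|h|\leq A}|1-g|^2\,dm \leq 2\sigma_h(A)/A$ --- so it is essentially the paper's approach.

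There is, however, a small but genuine difference in execution worth noting. The paper's proof applies Lemma~\ref{Lemma:HFS1} as a black box to the function $\zeta \mapsto h(\zeta)/(1-\overline{z}\zeta)$, which yields a bound on the integral over the set $\{|G_z h| \leq A\}$ rather than $\{|h| \leq A\}$; one then has to ``argue as in Lemma~\ref{Lemma:HFS2}'' to compare those two truncation domains and obtain uniformity in $z$. You instead reopen the proof of Lemma~\ref{Lemma:HFS1} and slip the bounded factor $G_z$ inside the integrand before splitting, so that the domain of integration is $\{|h|\leq A\}$ from the outset and no domain comparison is needed. This buys you a shorter, more direct route to the uniform bound (the dependence on $z$ enters only through the harmless factor $(1-r)^{-1}$), at the cost of not being able to cite Lemma~\ref{Lemma:HFS1} verbatim. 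Both are valid; yours is a mild streamlining.
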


		\begin{proof}
			This result can be obtained by applying
			Lemma \ref{Lemma:HFS1} to the function
			$$\frac{h(\zeta)}{1 - \overline{z}\zeta}$$ and then arguing as in
			the proof of Lemma \ref{Lemma:HFS2}.  The details are largely identical.
		\end{proof}
		
We are now ready to conclude the proof of Theorem \ref{Theorem:AlexHerglotz}.
From Lemmas \ref{Lemma:HFS2} and \ref{Lemma:HFS3} we have
\begin{align}
    h(z)	
    &=	\lim_{A \to \infty} 
    \int_{|h|\leq A} \frac{h(\zeta) - \overline{h(\zeta)}}{1 - \overline{\zeta} z} dm(\zeta)	\nonumber \\
    &=	\lim_{A \to \infty} i \int_{|h| \leq A}\frac{2 v(\zeta)}{1 - \overline{\zeta} z} dm(\zeta) \label{eq:Piq}
\end{align}
uniformly on compact subsets of $\D$.  Since $v(0) = 0$, Lemma \ref{Lemma:HFS1} ensures that
\begin{equation*}
    \lim_{A\to \infty} i \int_{|h|\leq A} v(\zeta) dm(\zeta) = 0.
\end{equation*}
Subtract this from \eqref{eq:Piq} to obtain
\begin{equation*}
h(z) = \lim_{A\to \infty} i \int_{|h|\leq A}
    \left( \frac{\zeta+z}{\zeta-z} \right) v(\zeta) dm(\zeta) 
\end{equation*}
uniformly on compact subsets of $\D$.  In light of the fact that
\begin{equation*}
\{ |h| \leq A \} \subseteq \{|v| \leq A \},
\end{equation*}
we see that the difference between
\begin{equation*}
    \int_{|h|\leq A} \left( \frac{\zeta+z}{\zeta-z} \right) v(\zeta) dm(\zeta)
    \quad \text{and}	\quad
     \int_{|v|\leq A} \left( \frac{\zeta+z}{\zeta-z} \right) v(\zeta) dm(\zeta)
\end{equation*}
is bounded in absolute value by 
\begin{equation*}
    \left( \frac{1+|z|}{1-|z|} \right)A \lambda_h(A).
\end{equation*}
Finally, the difference between
\begin{equation*}
     \int_{|v|\leq A} \left( \frac{\zeta+z}{\zeta-z} \right) v(\zeta) dm(\zeta)
    \quad 	\text{and}	\quad
    \int_{\T} \left( \frac{\zeta+z}{\zeta-z} \right) v_A(\zeta) dm(\zeta)
\end{equation*}
is bounded in absolute value by 
\begin{equation*}
\left(\frac{1+|z|}{1-|z|} \right)A \lambda_v(A)
\end{equation*}
so 
\begin{equation*}
	h(z) = \lim_{A\to\infty} i \int_{\T} 
	\frac{\zeta+z}{\zeta-z} v_A(\zeta) dm(\zeta),
\end{equation*}
which is the desired result \eqref{eq:HerglotzA}.  This concludes the proof
of Theorem \ref{Theorem:AlexHerglotz}.\qed

\section{Bilateral products}\label{Section:Bilateral}

    From Theorem \ref{ufhpaiuref} we can write every $\mathsf{RO}$ function with bounded argument as an infinite product. Our aim in this section is to establish a similar factorization theorem for $\mathsf{RO}$ functions 
    with possibly unbounded argument.  
    Suppose that $f \in \RO$ and write
    \begin{equation*}
        f = |f(0)| \exp[ \pi (u + iv)],
    \end{equation*}
    where $u \in L^1$.  Although this is not enough to say that
    $v \in L^1$, now that we have Theorem \ref{Theorem:AlexHerglotz}
    at our disposal, this does not pose an insurmountable obstacle.

\subsection{Factorization of $\RO$ functions}

    \begin{Theorem}\label{Theorem:Bilateral}
        If $f \in \RO$, then there exist inner functions $\phi_n^+$ and $\phi_n^-$ so that
        \begin{equation}\label{eq:BLF}
          f = |f(0)|  \prod_{n \geqslant 1} \left(\frac{1 -i \phi_n^+}{1 + i \phi_n^+} \right) \left(\frac{1 + i \phi_n^-}{1 - i \phi_n^-} \right),
        \end{equation}
        where the product converges locally uniformly on $\D$.
    \end{Theorem}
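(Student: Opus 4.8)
The plan is to realize $f$ as the exponential of a Herglotz $A$-integral and then read off the product by decomposing the (integer-valued) argument into its level sets. Write $f = |f(0)| \exp[\pi h]$, where $h = u + iv := \tfrac{1}{\pi}\bigl(\log f - \log|f(0)|\bigr)$. Because $f$ is outer, $\log f \in N^+$ (as in the proof of Lemma~\ref{Lemma:Cayley}(e)), so $h \in N^+$ and $u(0) = 0$. Since $f$ is real a.e.\ on $\T$, we have $\arg f \in \pi\Z$ a.e., so the boundary function $v = \tfrac1\pi \arg f$ is \emph{integer-valued} a.e.\ on $\T$; this discreteness is what ultimately produces the product. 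The first task is to verify the hypothesis of Theorem~\ref{Theorem:AlexHerglotz}, i.e.\ that $h \in H_0^{1,\infty}$: outerness gives $u = \tfrac1\pi\log|f| \in L^1$, Kolmogorov's theorem then forces $v = \widetilde{u} + v(0) \in L_0^{1,\infty}$, and a short distribution-function estimate (the inclusion $L^1 + L_0^{1,\infty} \subseteq L_0^{1,\infty}$) shows $u + iv \in L_0^{1,\infty}$ as well.

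With $h \in H_0^{1,\infty}$ and $u(0)=0$, Theorem~\ref{Theorem:AlexHerglotz} yields
\[
  h(z) = \lim_{A\to\infty} i \int_{\T} \frac{\zeta+z}{\zeta-z}\, v_A(\zeta)\,dm(\zeta)
\]
uniformly on compact subsets of $\D$. I would run the limit along \emph{integers} $A = N$. Since $v$ is integer-valued, the truncation $v_N$ coincides with $\sum_{n=1}^N(\chi_{E_n^+} - \chi_{E_n^-})$, where $E_n^+ = \{v \geq n\}$ and $E_n^- = \{v \leq -n\}$. Comparing with the Herglotz representation \eqref{eq:Herglotz} of the Cayley inner functions, namely $\log f_E(z) = i\pi\int_{\T}\tfrac{\zeta+z}{\zeta-z}\chi_E\,dm$, the truncated integral becomes
\[
  \pi i \int_{\T}\frac{\zeta+z}{\zeta-z}\, v_N \, dm
    = \sum_{n=1}^N \bigl(\log f_{E_n^+} - \log f_{E_n^-}\bigr)
    = \log \prod_{n=1}^N \frac{f_{E_n^+}}{f_{E_n^-}}.
\]

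Exponentiating, the partial products $\prod_{n=1}^N f_{E_n^+}/f_{E_n^-}$ equal $\exp\bigl[\pi i\int_\T\tfrac{\zeta+z}{\zeta-z}v_N\,dm\bigr]$; since these exponents converge to $\pi h$ uniformly on compacta and $\exp$ is uniformly continuous on bounded sets, the partial products converge locally uniformly to $e^{\pi h} = f/|f(0)|$. Finally, setting $\phi_n^\pm := \phi_{E_n^\pm} = T^{-1}(f_{E_n^\pm})$ (inner by Lemma~\ref{Lemma:Cayley}) and using $f_E = T(\phi_E) = i\tfrac{1-i\phi_E}{1+i\phi_E}$ gives
\[
  \frac{f_{E_n^+}}{f_{E_n^-}} = \left(\frac{1-i\phi_n^+}{1+i\phi_n^+}\right)\left(\frac{1+i\phi_n^-}{1-i\phi_n^-}\right),
\]
so the product is exactly \eqref{eq:BLF}. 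The main obstacle is not any single computation but the hypothesis check in the first paragraph together with the interchange in the second: one must confirm the level-set decomposition of $v_N$ is legitimate for every integer $N$ and that passing the Herglotz $A$-limit through the exponential yields \emph{locally uniform} (not merely pointwise) convergence of the product. Both hinge on the uniform-on-compacta conclusion of Theorem~\ref{Theorem:AlexHerglotz} and on the integrality of $v$, which aligns the continuous truncation $v_N$ with the discrete sum of characteristic functions.
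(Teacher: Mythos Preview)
Your proposal is correct and follows essentially the same route as the paper: write $f=|f(0)|\exp[\pi(u+iv)]$, invoke the Herglotz $A$-integral representation (Theorem~\ref{Theorem:AlexHerglotz}) for $h=u+iv$, decompose the integer-valued truncation $v_N$ via the level sets $E_n^{\pm}$, and exponentiate to obtain the product of Cayley inner functions. If anything, you are slightly more careful than the paper in verifying the hypothesis $h\in H_0^{1,\infty}$ (the paper only notes $v\in L_0^{1,\infty}$) and in justifying passage to integer truncation levels.
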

    
    \begin{proof}
        Write $f = |f(0)| \exp[ \pi (u + iv)]$, where $u \in L^1$.  
        Without loss of generality, we may assume that $|f(0)| = 1$; that is, $u(0) = 0$.
        For each positive integer $n$, let
        \begin{equation*}
            E_n^+ = \{v \geqslant  n\}, \qquad E_n^- = \{v \leqslant -n\},
        \end{equation*}
        so that
        \begin{equation*}
            E_1^+ \supseteq E_2^+ \supseteq \cdots \quad \text{and} \quad
            E_1^- \supseteq E_2^- \supseteq \cdots.
        \end{equation*}
        Let
        \begin{equation*}
        	f_n^+ = f_{E_n^+}, \qquad \phi_n^+ = \phi_{E_n^+}, \qquad f_n^- = f_{E_n^-}, \quad \text{and}\quad\phi_n^- = \phi_{E_n^-},
        \end{equation*}
        where $\phi_{E_n}^{\pm}$ are the inner functions described in Subsection \ref{Subsection:Cayley}
        and $$f_{E_n^{\pm}} = T(\phi_{E_n^{\pm}})$$ are the corresponding Cayley inner functions (recall Definition \ref{28734yiruhekfjd}).

        Since $v \in L_0^{1,\infty}$, an application of Theorem \ref{Theorem:AlexHerglotz} to the function $h = u+iv$ 
        implies that for each $r \in (0,1)$ the harmonic extension of $h$ to $\D$ satisfies
        \begin{align*}
            h 
            &= \lim_{A\to\infty} i \int_{\T} \frac{\zeta+z}{\zeta-z}v_A(\zeta) dm(\zeta)\\
            &= \lim_{A\to\infty} \sum_{1 \leq n \leq |A|} ( -\widetilde{\chi}_{E_n^+} + i\chi_{E_n^+} +\widetilde{\chi}_{E_n^-} - i\chi_{E_n^-}) 
        \end{align*}
        uniformly on $|z| \leq r$.  That is to say, the series representation
        \begin{equation*}
            h = \sum_{n \geq 1} ( -\widetilde{\chi}_{E_n^+} + i\chi_{E_n^+} +\widetilde{\chi}_{E_n^-} - i\chi_{E_n^-})
        \end{equation*}
        is valid on $|z| \leq r$ and the convergence is uniform.
        Consequently,
        \begin{align}
            f 
            = \exp \pi h  & = \exp\left(\pi  \sum_{n \geq 1} ( -\widetilde{\chi}_{E_n^+} + i\chi_{E_n^+} +\widetilde{\chi}_{E_n^-} - i\chi_{E_n^-})  \right) \label{eq:NthPP}\\
            &= \prod_{n \geq 1} \frac{\exp[\pi(-\widetilde{\chi}_{E_n^+} + i\chi_{E_n^+} )]}{\exp[\pi(-\widetilde{\chi}_{E_n^-} + i\chi_{E_n^-} )]} \nonumber \\
            &= \prod_{n \geq 1}  \frac{f_n^{+}}{f_{E_n^-}} \nonumber
        \end{align}
        uniformly on $|z| \leq r$.
    \end{proof}

    \begin{Corollary}
        Suppose that $f = I_f F$ is a non-constant function in $\RR^+$,
        where $I_f$ is inner and $F$ is outer. Then there exist inner functions $\phi_n^+$ and $\phi_n^-$ so that
        \begin{equation}\label{Product}
          f = |f(0)| K(I_f) \prod_{n \geqslant 1} \left(\frac{1 -i \phi_n^+}{1 + i \phi_n^+} \right) \left(\frac{1 + i \phi_n^-}{1 - i \phi_n^-} \right),
        \end{equation}
        where the product converges uniformly on compact subsets of $\D$.  The factor $K(I_f)$ is to be ignored if $I_f$ is constant.  
        Moreover, if $f$ belongs to $\RR ^p$, then the infinite product belongs to $\RR ^p$.
    \end{Corollary}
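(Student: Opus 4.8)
The plan is to obtain this corollary almost for free by feeding the Koebe factorization of Theorem~\ref{Theorem:Factor} into the bilateral product of Theorem~\ref{Theorem:Bilateral}; the genuine analysis already lives in those two results, so the work here is mostly bookkeeping.

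First I would apply Theorem~\ref{Theorem:Factor} to the non-constant $f = I_f F \in \RR^+$ to peel off the inner part, writing $f = K(I_f)\,R_f$, where $K(I_f) = -4k(I_f)$ is the Koebe inner function attached to the inner factor $I_f$ and $R_f = -\tfrac14(1-I_f)^2 F \in \RO$ is real outer. When $I_f$ is a unimodular constant, $f$ is itself outer, so $f \in \RO$ and the Koebe factor disappears; this is exactly the situation flagged by the phrase ``ignored if $I_f$ is constant.'' Theorem~\ref{Theorem:Factor} also records that $R_f \in \RR^p$ whenever $f \in \RR^p$, a fact I would hold in reserve for the last step.

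Next I would feed $R_f \in \RO$ into Theorem~\ref{Theorem:Bilateral}, which supplies inner functions $\phi_n^{\pm}$ together with
\[
  R_f = |R_f(0)| \prod_{n \geq 1}\left(\frac{1 - i\phi_n^+}{1 + i\phi_n^+}\right)\left(\frac{1 + i\phi_n^-}{1 - i\phi_n^-}\right),
\]
the product converging uniformly on compact subsets of $\D$. Multiplying through by $K(I_f)$ then produces the asserted representation of $f = K(I_f)R_f$. Local uniform convergence survives this multiplication because $K(I_f)$ is a fixed function, analytic on $\D$ (the inner function $I_f$ omits the value $1$ in $\D$, so $1 - I_f$ never vanishes there) and hence bounded on each compact subset.

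The step that actually requires care is the scalar prefactor, and I expect it to be the main (indeed the only) obstacle. Theorem~\ref{Theorem:Bilateral} delivers the constant $|R_f(0)|$, while $f(0) = K(I_f)(0)\,R_f(0)$ forces $|f(0)| = |K(I_f)(0)|\,|R_f(0)|$; these coincide only when $|K(I_f)(0)| = 1$, and one must be alert to the case $I_f(0)=0$, where $K(I_f)(0)=0$ and the correct normalizing constant is $|R_f(0)|$ rather than $|f(0)|$. I would therefore fix the constant by evaluating at the origin, using Lemma~\ref{Lemma:Cayley}(c): each Cayley factor has unit modulus at $0$, so the infinite product has modulus $1$ there and the prefactor is precisely the one inherited from $R_f$. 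Finally, the $\RR^p$ claim is immediate: the infinite product equals $R_f/|R_f(0)|$, a positive real multiple of $R_f$, and since $R_f \in \RR^p$ and $\RR^p$ is stable under multiplication by positive reals (realness on $\T$, membership in the algebra $N^+$, and membership in $H^p$ are all preserved), the product lies in $\RR^p$.
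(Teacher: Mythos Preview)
Your approach matches the paper's exactly---its entire proof is the one line ``This follows from Theorem~\ref{Theorem:Factor} and~\ref{Theorem:Bilateral}''---and your bookkeeping is more careful than that. In particular, your concern about the prefactor is warranted: combining the two theorems really produces $|R_f(0)|$ rather than $|f(0)|$ (these differ unless $|K(I_f)(0)|=1$, and the stated formula even collapses to $0$ when $I_f(0)=0$), so you have caught a minor slip in the statement that the paper's terse proof glosses over.
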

    
    \begin{proof}
        This follows from Theorem \ref{Theorem:Factor} and \ref{Theorem:Bilateral}.
    \end{proof}

\subsection{Absolute Convergence}

    Theorem \ref{Theorem:Bilateral} does not guarantee absolute convergence and it is
    not clear whether absolute convergence occurs in general.  If
    \begin{equation}\label{eq:EqImpRI}
    	\sum_{n \geq 1} ( m(E_n^+) + m(E_n^-) ) < \infty,
    \end{equation}
    then $v \in L^1$ so Theorem \ref{Theorem:UniE_n} ensures that
    \begin{equation*}
        \prod_{n \geq 1} f_{E_n^+}\quad\text{and}\quad \prod_{n \geq 1} f_{E_n^-}
    \end{equation*}
    converge separately -- the convergence is absolute and locally uniform on $\D$.
    So in this case, the product \eqref{eq:BLF} converges absolutely.
    
    The $N$th partial product of \eqref{eq:NthPP} is
    \begin{equation*}
     \prod_{1 \leq n \leq N} \frac{\exp[\pi(-\widetilde{\chi}_{E_n^+} + i\chi_{E_n^+} )]}{\exp[\pi(-\widetilde{\chi}_{E_n^-} + i\chi_{E_n^-} )]} .
    \end{equation*}
    Since the harmonic conjugates $\widetilde{\chi}_{E_n^{\pm}}$ vanish at the origin, the value of the preceding
    product at $0$ is
    \begin{equation*}
        \exp\left[\pi i\left( m(E_n^+)- m(E_n^-) \right) \right].
    \end{equation*}
    Consequently, the product in \eqref{eq:BLF} converges absolutely at $0$ if and only if
    \begin{equation}\label{eq:ABSCC}
    	\sum_{n \geq 1} \left| m(E_n^+) - m(E_n^-) \right| < \infty.
    \end{equation}
    It is not clear if there is a function in $\RO$ for which \eqref{eq:ABSCC} fails.  This question
    was posed in \cite{MR2021044} and, frustratingly, remains open.  We hope that some spirited
    reader will someday be able to resolve this.
		
    This question can be viewed in terms of distribution functions.  
    Write $v = v_+ - v_-$, where $v_+$ and $v_-$ are non-negative.  Then
    $\lambda_{v_+}$ is equal to $m(E_{n+1}^+)$ on the interval $(n,n+1]$
    and $\lambda_{v_-}$ is equal to $m(E_{n+1}^-)$ on that same interval.  Consequently,
    \begin{equation*}
    	\text{\eqref{eq:EqImpRI} converges}\quad\iff\quad
	\text{$\displaystyle\int_0^{\infty} \left( \lambda_{v_+}(t) - \lambda_{v_-}(t) \right) \,dt$ converges},
    \end{equation*}
    where the integral above is regarded as an improper Riemann integral.  In other words, we have
    \begin{equation*}
    	\text{\eqref{eq:ABSCC} converges}\quad\iff\quad
	\big(\lambda_{v_+} - \lambda_{v_-} \big) \in L^1.
    \end{equation*}
    The next observation is from \cite{MR2021044}.  It follows by noting that 
    (i) for any real-valued measurable function $w$ on $\T $, the absolute integrability of
	$\lambda_{w_+} - \lambda_{w_-}$ on $[0,\infty)$ is equivalent to the same condition for any bounded perturbation of $w$;
	(ii) any $w$ as in (b) (see below) is a bounded perturbation of such an integer-valued $w$

		\begin{Theorem}\label{Theorem:AbsIntegrable}
			The following are equivalent:
			(a) For every function $f \in \RO$, the infinite product in the theorem converges absolutely at 0;
			(b) If $w$ is the conjugate of a real-valued function in $L^1$, then $\lambda_{w_+} - \lambda_{w_-}$
					is absolutely integrable on $[0,\infty)$.
		\end{Theorem}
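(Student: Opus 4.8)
The plan is to translate (a) and (b) into statements about distribution functions of conjugate functions and then to extract the two ingredients (i) and (ii) named in the remark above. Recall from the discussion culminating in \eqref{eq:ABSCC} that an $f \in \RO$, written as $f = |f(0)|\exp[\pi(u+iv)]$ with $u \in L^1$ and $v$ integer-valued, yields a product that converges absolutely at $0$ exactly when $\lambda_{v_+} - \lambda_{v_-} \in L^1([0,\infty))$. Moreover $f \in \RO$ is equivalent to $v$ being integer-valued with $\widetilde v \in L^1$: indeed $\log|f| = \pi u \in L^1$ forces $\widetilde v = -u + \widehat u(0) \in L^1$, and conversely $\exp[\pi(-\widetilde v + iv)] \in \RO$ for any such $v$. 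Writing $D_w := \lambda_{w_+} - \lambda_{w_-}$, statement (a) asserts $D_v \in L^1$ for every integer-valued conjugate function $v$, whereas (b) asserts $D_w \in L^1$ for every conjugate function $w$ of a real $L^1$ function.

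Given (i) and (ii), both implications are immediate. For (b)$\Rightarrow$(a): if $f \in \RO$ has the integer-valued $v$ above, then $w := v - v(0) = \widetilde{(-\widetilde v)}$ is the conjugate of the $L^1$ function $-\widetilde v$, so $D_w \in L^1$ by (b); since $v$ is the constant (hence bounded) perturbation $w + v(0)$, (i) gives $D_v \in L^1$. For (a)$\Rightarrow$(b): given $w = \widetilde g$ with $g \in L^1$, (ii) furnishes an integer-valued $v$ with $\widetilde v \in L^1$ and $v - w$ bounded; then $\exp[\pi(-\widetilde v + iv)] \in \RO$, so (a) yields $D_v \in L^1$, and (i) transfers this to $D_w \in L^1$.

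I would establish (i) by a layer-cake estimate. Suppose $|b| \le M$ a.e. For $t \ge 0$ the symmetric difference satisfies $\{w + b > t\}\,\triangle\,\{w > t\} \subseteq \{\,t - M \le w \le t + M\,\}$, hence $|\lambda_{(w+b)_+}(t) - \lambda_{w_+}(t)| \le m(\{t - M \le w \le t + M\})$. Integrating in $t$ and using Tonelli,
\[
\int_0^{\infty} m(\{t - M \le w \le t + M\})\,dt
= \int_{\T} \big|\, [w(\zeta)-M,\,w(\zeta)+M]\cap[0,\infty)\,\big|\,dm(\zeta) \le 2M.
\]
The same bound holds for the negative parts, so $\int_0^\infty |D_{w+b} - D_w|\,dt \le 4M < \infty$, and therefore $D_{w+b} \in L^1 \iff D_w \in L^1$. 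This is exactly (i).

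The crux, and the step I expect to be the main obstacle, is (ii): given a conjugate function $w = \widetilde g$ with $g \in L^1$, one must produce an integer-valued $v$ at bounded distance from $w$ whose own conjugate $\widetilde v$ still lies in $L^1$. Since $\widetilde w = -g + \widehat g(0) \in L^1$ already, this reduces to choosing a bounded correction $b$ with $w + b$ integer-valued and $\widetilde b \in L^1$. The naive choice $b = -\{w\}$ (rounding to the nearest integer) is bounded but need not have an $L^1$ conjugate when $w$ is wild, so a more careful quantization is needed---one that realizes the integer-valued approximant as the argument of a genuine $\RO$ function, invoking the Smirnov ($N^+$) structure rather than pointwise rounding. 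This is the delicate construction carried out in \cite{MR2021044}, which I would follow; once it is in place, the reduction above completes the equivalence.
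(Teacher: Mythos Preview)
Your overall strategy is exactly the paper's: reduce the equivalence to the two ingredients (i) and (ii), then apply them in both directions. Your layer-cake proof of (i) is correct and more explicit than the paper's one-line remark.

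The only issue is your assessment of (ii). You claim that the naive rounding $v = \lfloor w \rfloor$ (so $b = v - w$ is bounded) ``need not have an $L^1$ conjugate when $w$ is wild,'' and that a delicate quantization from \cite{MR2021044} is required. This is mistaken: any bounded $b$ lies in $L^\infty \subseteq L^2$, and M.~Riesz's theorem gives $\widetilde b \in L^2 \subseteq L^1$ automatically, regardless of how wild $w$ is. With $b_0 = \widehat b(0)$ one has $b - b_0 = -\widetilde{\,\widetilde b\,}$, whence
\[
v \;=\; w + b \;=\; \widetilde g - \widetilde{\,\widetilde b\,} + b_0 \;=\; \widetilde{(g - \widetilde b)} + b_0,
\]
with $g - \widetilde b \in L^1$. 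Thus $v$ is, up to an additive constant, the conjugate of a real $L^1$ function, and $\exp[\pi(-\widetilde v + iv)]$ is a bona fide element of $\RO$. So the naive rounding already furnishes the integer-valued approximant you need; no further construction is required. With this correction your argument is complete and coincides with the paper's.
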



\begin{Example}
In this example, we produce an $f \in \RO$ with nonintegrable argument, but such that the product in Theorem
\ref{Theorem:Bilateral} converges absolutely on $\D$.
Suppose that $f = \exp[\pi(u+iv)] \in \RO$ is such that 
\begin{enumerate}
    \item $v(\overline{\zeta}) = -v(\zeta)$ for a.e.~$\zeta \in \T$;
    \item $v$ is positive on the upper half of $\T$;
    \item $v$ is nonincreasing (with respect to $\theta$) on the upper half of $\T$.
\end{enumerate}
Conditions (a), (b), and (c) ensure that 
$E_n^+$ is an arc in the upper half of $\T$ with one endpoint at $1$ and 
$E_n^-$ is the reflection of $E_n^+$ across the real line.
Let the other endpoint be denoted $e^{i\alpha_n}$.  By \eqref{eq:CayleyArc}, we have 
\begin{equation*}
\frac{f_n^+(z)}{f_n^-(z)} = \frac{(z-e^{i\alpha_n})(z-e^{-i\alpha_n})}{(1-z)^2}.
\end{equation*}
Therefore,
\begin{align*}
\frac{f_n^+(z)}{f_n^-(z)} - 1 	
&=	 \frac{2z(1 - \cos\alpha_n)}{(1-z)^2}				\\
&=	\frac{4z \sin^2 \frac{\alpha_n}{2}}{(1-z)^2}			\\
&=	O\left( \frac{\alpha_n^2}{(1-z)^2} \right).
\end{align*}
Since $f \in \RO$, we know that $u \in L^1$ and $v \in L_0^{1,\infty}$, so \eqref{eq:L01}
tells us that $$\alpha_n = m(E_n) = o\left(\frac{1}{n}\right).$$  Thus,
\begin{equation*}
\sum_{n \geq 1} \left|\frac{f_n^+(z)}{f_n^-(z)} - 1 \right| < \infty,
\end{equation*}
with uniform convergence on compact subsets of $\D$.
This establishes the absolute convergence of the product.
\end{Example}

\subsection{A sufficient condition}
    Theorem \ref{Theorem:Bilateral} shows that any function in 
    $\RO$ enjoys a locally uniformly convergent bilateral product representation.  
    As we have seen above, this does not necessarily provide us with absolute convergence.    
    We investigate
    here a simple criterion which, given inner functions $\phi_n^+$ and $\phi_n^-$, imply the absolute 
    and locally uniform convergence of the 
    \emph{bilateral product}
    \begin{equation}\label{eq:BilateralProduct}
       \prod_{n \geq 1} \frac{T(\phi_n^+)}{T(\phi_n^-)}
    \end{equation}
    To this end, we require the following simple lemma.

    \begin{Lemma}
    	For $z \neq i$ and $w \neq -i$,
    	\begin{equation}\label{eq:MainIdentity}
    		1 - \frac{ T(z) }{T(w)}  
    		= \frac{2i(z-w)}{(1 + i z)(1 - i w)}.
    	\end{equation}
    \end{Lemma}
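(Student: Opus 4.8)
The plan is to prove \eqref{eq:MainIdentity} by direct computation, substituting the definition \eqref{eq:T} of $T$ and clearing denominators. First I would write $T(z) = i(1-iz)/(1+iz)$ and $T(w) = i(1-iw)/(1+iw)$, so that the common factor $i$ cancels in the quotient and
\begin{equation*}
    \frac{T(z)}{T(w)} = \frac{(1-iz)(1+iw)}{(1+iz)(1-iw)}.
\end{equation*}
The hypotheses $z \neq i$ and $w \neq -i$ guarantee $1+iz \neq 0$ and $1-iw \neq 0$, so both this quotient and the right-hand side of \eqref{eq:MainIdentity} are well defined.

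Next I would place $1 - T(z)/T(w)$ over the common denominator $(1+iz)(1-iw)$, obtaining
\begin{equation*}
    1 - \frac{T(z)}{T(w)} = \frac{(1+iz)(1-iw) - (1-iz)(1+iw)}{(1+iz)(1-iw)}.
\end{equation*}
The only work is expanding the numerator. Using $-i^2 = 1$, each of the two products contributes the same constant term $1$ and the same quadratic term $zw$, so these cancel in the difference; what survives is the cross terms, and these combine to $2i(z-w)$. Substituting this back into the displayed fraction gives exactly \eqref{eq:MainIdentity}.

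The computation is entirely elementary, so there is no genuine obstacle; the single point demanding care is the sign bookkeeping in the numerator. One must note that $(1+iz)(1-iw)$ supplies the cross terms $+iz - iw$, while $(1-iz)(1+iw)$ supplies $-iz + iw$, so their difference \emph{doubles} the cross terms rather than annihilating them — this is precisely what produces the factor of $2i$ in the stated identity.
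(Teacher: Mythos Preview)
Your proof is correct and follows exactly the same direct computation as the paper: form the quotient $T(z)/T(w)$, place $1 - T(z)/T(w)$ over the common denominator $(1+iz)(1-iw)$, and expand the numerator to obtain $2i(z-w)$. There is nothing to add.
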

    
    \begin{proof}
    	This is a straightforward computation:
    	\begin{align*}
    		1 - \frac{ T(z) }{T(w)} 
    		&=	1 - \left(i \frac{1 - i z}{1 + i z} \right) \left(i \frac{1 - i w}{1 + i w} \right)^{-1}\\
    		& =	1 - \left( \frac{1 - i z}{1 + i z} \right) \left( \frac{1 + i w}{1 - i w} \right)\\
    		&=	\frac{(1 + i z)(1 - i w) - (1 - i z)(1 + i w)}{(1 + i z)(1 - i w)  } \\
    		&=	\frac{(1 + i z - i w + zw) - (1 - i z + i w + z + w)}{(1 + i z)(1 - i w)}\\
    		&=	\frac{2i(z - w)}{(1 + i z)(1 - i w)}.\qedhere
    	\end{align*}
    \end{proof}
    
    When considering bilateral products \eqref{eq:BilateralProduct}, it is natural to assume
    that for $r \in (0,1)$, the inner functions $\phi_n^+$ and $\phi_n^-$ are bounded away from $i$ and $-i$ as $n \to \infty$, respectively,
    on $|z| \leq r$.  This guarantees that $T(\phi_n^+)$ and $T(\phi_n^-)$ are bounded away from $\infty$ and $0$
    as $n \to \infty$, respectively, on $|z| \leq r$.

    \begin{Theorem}
        Suppose that $\phi_n^+$ and $\phi_n^-$ are two sequences of inner functions so that 
        for $r \in (0,1)$, the inner functions $\phi_n^+$ and $\phi_n^-$ are bounded away from $i$ and $-i$ on $|z| \leq r$ as $n \to \infty$, respectively.
         Then  \eqref{eq:BilateralProduct}
        converges absolutely and locally uniformly on $\D$
        if and only if $\sum_{n \geq 1} |\phi_n^+ - \phi_n^-|$
        converges locally uniformly on $\D$.
    \end{Theorem}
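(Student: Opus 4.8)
The plan is to reduce everything to the elementary product-convergence criterion: for analytic functions $p_n$ on $\D$, the product $\prod_n p_n$ converges absolutely and locally uniformly precisely when the series $\sum_n |p_n - 1|$ converges locally uniformly. Setting $p_n = T(\phi_n^+)/T(\phi_n^-)$, the task becomes comparing $\sum_n |p_n - 1|$ with $\sum_n |\phi_n^+ - \phi_n^-|$ on each compact disk $|z|\le r$.

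First I would substitute $z \mapsto \phi_n^+(z)$ and $w \mapsto \phi_n^-(z)$ into the identity \eqref{eq:MainIdentity} to obtain
\[
1 - \frac{T(\phi_n^+)}{T(\phi_n^-)} = \frac{2i(\phi_n^+ - \phi_n^-)}{(1 + i\phi_n^+)(1 - i\phi_n^-)}.
\]
Taking absolute values gives $|p_n - 1| = c_n |\phi_n^+ - \phi_n^-|$, where $c_n = 2 / \big(|1 + i\phi_n^+|\,|1 - i\phi_n^-|\big)$. Everything now hinges on bounding $c_n$ above and below by positive constants, uniformly in large $n$ on each disk $|z| \le r$.

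Next I would record the algebraic simplifications $1 + i\phi_n^+ = i(\phi_n^+ - i)$ and $1 - i\phi_n^- = -i(\phi_n^- + i)$, so that $|1 + i\phi_n^+| = |\phi_n^+ - i|$ and $|1 - i\phi_n^-| = |\phi_n^- + i|$. Since each $\phi_n^\pm$ maps $\D$ into $\D$, we have $|\phi_n^\pm|\le 1$, whence $|\phi_n^+ - i| \le 2$ and $|\phi_n^- + i| \le 2$; this yields the uniform lower bound $c_n \ge \tfrac12$. For the upper bound I invoke the hypothesis: on $|z| \le r$ the functions $\phi_n^+$ stay bounded away from $i$ and the $\phi_n^-$ bounded away from $-i$ as $n \to \infty$, so there exist $\delta > 0$ and an integer $N$ with $|\phi_n^+ - i| \ge \delta$ and $|\phi_n^- + i| \ge \delta$ for all $n \ge N$ and all $|z| \le r$; hence $c_n \le 2/\delta^2$ for $n \ge N$.

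These two-sided bounds give, for $n \ge N$ and $|z| \le r$,
\[
\tfrac12\,|\phi_n^+ - \phi_n^-| \le |p_n - 1| \le \tfrac{2}{\delta^2}\,|\phi_n^+ - \phi_n^-|,
\]
so by the comparison test (the finitely many terms $n < N$ being irrelevant to convergence) the series $\sum_n |p_n - 1|$ and $\sum_n |\phi_n^+ - \phi_n^-|$ converge or diverge together, uniformly on $|z|\le r$. Since $r \in (0,1)$ is arbitrary, this establishes the local uniform equivalence, and combining it with the product-convergence criterion from the first paragraph yields the theorem. The main obstacle is securing the upper bound on $c_n$, that is, keeping the denominator $(1 + i\phi_n^+)(1 - i\phi_n^-)$ away from zero; this is exactly the role of the ``bounded away from $i$ and $-i$'' hypothesis, without which $c_n$ could be unbounded and the comparison would break down in one direction.
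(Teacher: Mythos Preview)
Your proposal is correct and follows essentially the same approach as the paper: both apply the identity \eqref{eq:MainIdentity} to get $|1 - T(\phi_n^+)/T(\phi_n^-)| = 2|\phi_n^+ - \phi_n^-|/(|1+i\phi_n^+|\,|1-i\phi_n^-|)$, bound the denominator above by $4$ (since $|\phi_n^\pm|\le 1$) and below by $\delta^2$ using the hypothesis, and conclude by comparison. Your write-up is in fact slightly more careful than the paper's, since you explicitly invoke the product-convergence criterion and handle the ``as $n\to\infty$'' quantifier by discarding finitely many terms.
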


    \begin{proof}
        Fix $r \in (0,1)$ and suppose that
        \begin{equation*}
            0 < \delta < \sup_{|z| \leq r}|1 \pm i \phi_n^{\pm}|
        \end{equation*}
        for $|z| \leq r$.  Since $|1 \pm i \phi_n^{\pm}| \leq 2$ on $\D$, \eqref{eq:MainIdentity} tells us that
        \begin{equation*}
        	\frac{1}{2} \sum_{n \geq 1} | \phi_n^+ - \phi_n^-| 
        	\,\leq\, \sum_{n \geq 1} \left| 1 - \frac{ T(\phi_n^+) }{T(\phi_n^-)} \right|
        	\,\leq\, \frac{2}{\delta^2} \sum_{n \geq 1} | \phi_n^+ - \phi_n^-|. \qedhere
        \end{equation*}
    \end{proof}
    
    \begin{Example}
        Suppose that $\phi_n^+$ is a sequence of singular inner functions so that for $r \in (0,1)$,
        $\phi_n^+$ is bounded away from $\pm i$ on $|z| \leq r$ as $n \to \infty$.
        Now recall that the Blaschke products are uniformly dense in the set of all inner functions \cite[Cor.~6.5]{Garnett}.
        Let $\phi_n^-$ be a sequence of Blaschke products for which
        \begin{equation*}
            \sum_{n \geq 1} \norm{\phi_n^+ - \phi_n^-}_{\infty} < \infty.
        \end{equation*}
        Then the product \eqref{eq:BilateralProduct} converges.
    \end{Example}


\section{Real complex functions in operator theory}

We end this survey with a few connections the real complex functions make with operator theory. Since these vignettes are applications and not the main structure results of these real functions, as was the rest of the survey, we will be a bit skimpy on the details, referring the interested reader to the original sources in the literature. 
		
\subsection{Riesz projections for $0 < p < 1$}\label{Section:Riesz}

At first glance, the very title of this subsection refers to an absurdity.  Any serious analyst knows that the
Riesz projection operator 
$$\sum_{z \in \Z} \widehat{f}(n) \zeta^{n} \mapsto \sum_{n  \geq 0} \widehat{f}(n) \zeta^n$$
 cannot be properly defined for functions in $L^p$ when $0 < p < 1$. Indeed, one cannot even speak of Fourier series for such functions.  Bear with us.  

For $f \in L^1$ with Fourier series
\begin{equation*}
    f \sim \sum_{n \in \Z} \widehat{f}(n) \zeta^{n},
\end{equation*}
we may consider the ``analytic part'' 
$$\sum_{n \geq 0} \widehat{f}(n) \zeta^n$$ of this series.
When $1 < p < \infty$ and $f \in L^p$, the function
\begin{equation*}
    Pf = \sum_{n \geq 0} \widehat{f}(n) \zeta^n
\end{equation*}
belongs to $H^p$ and the linear transformation $f \mapsto Pf$ is a bounded projection from $L^p$ onto $H^p$ and is called the \emph{Riesz projection}.
In fact, \cite{MR1780482} tells us that
\begin{equation*}
    \norm{P}_{L^p \to H^p} =  \csc \left( \frac{\pi}{p} \right).
\end{equation*}
As mentioned earlier, this is no longer true when $p = 1$ or $p = \infty$. 

In this section we follow \cite{SCSSC} and show that an analog of the  Riesz projection
can be defined on $L^p$ when $0 < p < 1$ by working modulo the complexification of $\RR^p$.
In other words, the functions in $\RR^p$ are the culprit since their presence is the source of the
unboundedness of the Riesz projection.  Indeed, if $f \in \RR^p$, then 
$f = \overline{f}$ a.e.~on $\T$ and so the intersection $H^p \cap \overline{H^p}$ (in terms of boundary values on $\T$) contains
many non-constant functions.  This, it turns out, is the only obstruction.

    The set $\RR^+$ of all real Smirnov functions is a real subalgebra of $N^+$.
    It is natural to consider the complexification of $\RR^+$: 
    \begin{equation}\label{eq:CC}
        \CC^+ := \{ a + i b : a,b \in \RR^+\}.
    \end{equation}
    This is a complex subalgebra of $N^+$.  With respect to the translation invariant metric \eqref{eq:NMetric}
    it inherits from $N^+$ \cite{MR1761913},  $\CC^+$ is both a complete metric space and a topological algebra. 
    
    It is evident from the definition \eqref{eq:CC} that the set of boundary functions
    corresponding to the elements of $\CC^+$ is closed under complex conjugation.
    Indeed, if $a,b \in \RR^+$, then, in terms of boundary functions defined for a.e.~$\zeta \in \T$, 
    \begin{equation*}
        \overline{a(\zeta) + i b(\zeta)} = a(\zeta) - i b(\zeta) \in \CC^+.
    \end{equation*}
      Consequently, $\CC^+$ carries a canonical involution. Indeed,
    if $f = a+ib$, where $a,b \in \RR^+$, then we define
    \begin{equation*}
        \widetilde{f} = a - i b \in \CC^+.
    \end{equation*}
   The $\widetilde{\cdot}$ operation is a \emph{conjugation}
    on $\CC^+$: It is conjugate linear, involutive, and isometric.  Moreover, $\widetilde{\cdot}$
    preserves outer factors since
    \begin{equation}\label{eq:IsomCong}
    	|f|^2 = |a|^2 + |b|^2 = |\widetilde{f}|^2 \quad \mbox{a.e.~on $\T$}. 
    \end{equation}
     
    We now consider the intersection of the algebra $\CC^+$ with the Hardy spaces $H^p$.  For each $p$ let
    \begin{equation*}
        \CC^p := \CC^+ \cap H^p.
    \end{equation*}
    Theorem \ref{Theorem:p1} implies that $\CC^p$ contains 
    non-constant functions only when $p \in (0,1)$.
    It turns out that $\CC^p$ is the appropriate complex analogue of $\RR^p$ needed to define a ``Riesz projection'' on $L^p$ for $0 < p < 1$.  

    The metric on $H^p$ dominates the metric \eqref{eq:NMetric} on $N^+$, so we conclude that 
    $\CC^p$ is a closed subspace of $H^p$ for each $p \in (0,1)$.  Moreover, $\CC^p$ is closed
    under the conjugation $\widetilde{\cdot}$ since \eqref{eq:IsomCong} ensures that
    it is isometric on $H^p$.
    
    We leave it to the reader to verify the following theorem from  \cite{SCSSC}.

    \begin{Theorem} For $0 < p < 1$, 
        the following sets are identical.
        \begin{enumerate}
            \item $\CC^p$.
            \item $\RR^p + i \RR^p$.
            \item $H^p \cap \overline{H^p}$ (as boundary functions).
        \end{enumerate}
    \end{Theorem}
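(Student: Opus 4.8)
The plan is to prove that the three sets coincide by establishing the cycle of inclusions $(b) \subseteq (a) \subseteq (c) \subseteq (b)$, so that no inclusion is verified twice. Two of these are essentially formal. The inclusion $(b) \subseteq (a)$ is immediate from the definitions: if $f = a + ib$ with $a, b \in \RR^p = \RR^+ \cap H^p$, then $a, b \in \RR^+$ places $f$ in $\CC^+$ by \eqref{eq:CC}, while $a, b \in H^p$ together with the vector-space structure of $H^p$ places $f$ in $H^p$; hence $f \in \CC^+ \cap H^p = \CC^p$. The inclusion $(c) \subseteq (b)$ is nearly as direct: given $f \in H^p \cap \overline{H^p}$, both $f$ and $\overline{f}$ are boundary functions of $H^p$ functions, so $a := \tfrac{1}{2}(f + \overline{f}) = \Re f$ and $b := \tfrac{1}{2i}(f - \overline{f}) = \Im f$ are, as linear combinations of elements of $H^p$, again in $H^p \subseteq N^+$, and they are real-valued a.e.\ on $\T$; thus $a, b \in \RR^+ \cap H^p = \RR^p$ and $f = a + ib \in \RR^p + i\RR^p$.

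The remaining inclusion $(a) \subseteq (c)$ carries the real content. Let $f = a + ib \in \CC^p$ with $a, b \in \RR^+$. Because $a$ and $b$ are real a.e.\ on $\T$, the boundary function of $\overline{f}$ equals $a - ib = \widetilde{f}$, and the identity \eqref{eq:IsomCong} gives $|\widetilde{f}| = |f| \in L^p$. Since $\widetilde{f} \in \CC^+ \subseteq N^+$, Smirnov's theorem \eqref{SmirnovThm} upgrades this to $\widetilde{f} \in H^p$. Thus $\overline{f}$ agrees a.e.\ with an $H^p$ boundary function, so $f \in \overline{H^p}$; combined with $f \in H^p$ this yields $f \in H^p \cap \overline{H^p}$, closing the loop.

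The main obstacle — indeed the only nonformal step — is precisely this last inclusion, where one must recover information about the individual real and imaginary parts from the single hypothesis that the combination $a + ib$ lies in $H^p$. The mechanism is the pointwise identity $|a + ib|^2 = |a|^2 + |b|^2$, valid a.e.\ on $\T$ exactly because $a, b$ are real-valued, which dominates $|\widetilde{f}|$ (equivalently $|a|$ and $|b|$) by $|f| \in L^p$; Smirnov's theorem then converts $L^p$-membership of these Nevanlinna functions into $H^p$-membership. I expect the only points demanding care to be the verification of \eqref{eq:IsomCong} and the bookkeeping that $a - ib$ is genuinely the boundary function of the analytic function $\widetilde{f} \in \CC^+$, rather than merely an $L^p$ function; without the real-boundary-value hypothesis this step would fail entirely.
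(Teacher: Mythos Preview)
Your argument is correct. The paper does not actually supply a proof of this theorem---it explicitly leaves the verification to the reader---so there is no detailed approach to compare against; your cycle $(b)\subseteq(a)\subseteq(c)\subseteq(b)$, with the one substantive step handled via \eqref{eq:IsomCong} and Smirnov's theorem \eqref{SmirnovThm}, is exactly the natural route the paper's setup points toward.
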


Let $0 < p < 1$. Since $C^p$ is a closed subspace of $L^p$, the quotient 
$L^p / C^p$ is an $F$-space under the standard quotient metric.  In other words, if $[f]$ denotes the equivalence class modulo $C^p$ of a function $f \in L^p$, then 
\[
\norm{[f]}^p := \inf_{\sigma\in C^p} \norm{f - \sigma}^{p}_{p}
\]
induces a translation invariant metric 
$$\rho([f],[g]) := \norm{[f]-[g]}^{p}_{p}$$ with respect to which $L^p / C^p$ is complete.  Similarly, we can
regard $H^p / C^p$ as a closed subspace of $L^p / C^p$ with respect to this metric.

A simple modification of a theorem of  Aleksandrov \cite{MR643380} says that we can decompose each $f \in L^p$, $0 < p < 1$, as 
$$f = u + v, \quad u \in H^p, v \in \overline{H^p},$$  
with some control over $\|u\|_{p}$ and $\|v\|_{p}$ \cite{SCSSC}. It is easily seen that this decomposition is unique modulo $C^p$ and hence
each equivalence class $[f] \in L^p/C^p$ decomposes uniquely as 
$$[f] = [u]+[v].$$  We can therefore define the \emph{Riesz projection} 
\begin{equation}\label{RPLp}
P:L^p/C^p \rightarrow H^p/C^p, \quad P[f] := [u].
\end{equation}  In a way, this map is an analogue of the Riesz projection operator
from $L^p$ to $H^p$ for $p \in (1,\infty)$.  
Indeed, if we regard equivalence classes as collections of boundary functions, then $P[f] \subseteq H^p$ and the Riesz projection returns the ``analytic part'' of $[f]$. The main theorem here is from \cite{SCSSC}.

\begin{Theorem}
   The Riesz projection from \eqref{RPLp} is bounded for each $p \in (0,1)$.  For every $f \in L^p$ we have ${\norm{P[f]}^p \leqslant K_p \norm{[f]}^p}$.
\end{Theorem}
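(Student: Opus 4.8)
The plan is to deduce the bound directly from the quantitative version of the Aleksandrov decomposition already used to construct $P$, exploiting the freedom to represent a class $[f] \in L^p/C^p$ by a nearly norm-minimal representative. Write $K_p$ for the constant furnished by that decomposition, so that every $g \in L^p$ splits as $g = u_g + v_g$ with $u_g \in H^p$, $v_g \in \overline{H^p}$, and the analytic part is controlled by
\[
    \norm{u_g}_p^p \leqslant K_p \norm{g}_p^p .
\]
This same $K_p$ will be the constant appearing in the statement.

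First I would fix $[f] \in L^p/C^p$ and $\varepsilon > 0$, and choose $\sigma \in C^p$ with $\norm{f - \sigma}_p^p < \norm{[f]}^p + \varepsilon$, which is possible by the definition of the quotient metric as an infimum. Setting $g = f - \sigma$ and applying the decomposition gives $g = u_g + v_g$ with the control above. Since $\sigma \in C^p = H^p \cap \overline{H^p}$ can be absorbed into either summand, we have $[g] = [f]$, and because $P$ is well defined on the quotient (the decomposition being unique modulo $C^p$), it follows that $P[f] = P[g] = [u_g]$. Taking the admissible competitor $\tau = 0 \in C^p$ in the infimum defining the quotient norm then yields
\[
    \norm{P[f]}^p = \norm{[u_g]}^p = \inf_{\tau \in C^p} \norm{u_g - \tau}_p^p \leqslant \norm{u_g}_p^p \leqslant K_p \norm{g}_p^p < K_p\big(\norm{[f]}^p + \varepsilon\big).
\]
Letting $\varepsilon \to 0$ produces $\norm{P[f]}^p \leqslant K_p \norm{[f]}^p$, exactly as claimed.

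The genuine content of the theorem is not in this short argument but in the cited decomposition from \cite{SCSSC}, namely that an arbitrary $L^p$ function splits into $H^p$ and $\overline{H^p}$ parts with the analytic part dominated by a \emph{uniform} constant multiple (in the $p$th-power metric) of the original. The one point I would check carefully is precisely that $K_p$ in that decomposition is independent of $g$, since it is this uniformity that transfers the estimate to the quotient. The passage to the quotient is otherwise harmless: the absorption of $C^p = H^p \cap \overline{H^p}$ into either summand is exactly what makes $P$ well defined, and it is what lets me replace $f$ by the near-optimal representative $g = f - \sigma$ without changing $P[f]$.
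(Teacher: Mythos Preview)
Your argument is correct and is exactly the natural deduction from the quantitative Aleksandrov decomposition the paper cites. The paper itself does not supply a proof of this theorem in the text; it states the decomposition $f = u + v$ with $u \in H^p$, $v \in \overline{H^p}$ and ``some control'' on the norms (referring to \cite{SCSSC}), observes that the decomposition is unique modulo $C^p$, defines $P$, and then records the boundedness as a theorem. Your $\varepsilon$-argument---choose a near-minimal representative $g = f - \sigma$, decompose, and bound $\norm{[u_g]}^p \leq \norm{u_g}_p^p \leq K_p\norm{g}_p^p$---is precisely how one passes from the pointwise decomposition estimate to the quotient estimate, and your caveat that the uniformity of $K_p$ is the real input is exactly right.
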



\subsection{Kernels of Toeplitz operators}

 For each $\phi \in L^{\infty}$ one can define the Toeplitz operator \cite{MR2223704}
$$T_{\phi}: H^2 \to H^2, \quad T_{\phi} f = P(\phi f),$$ 
where $P: L^2 \to H^2$ is the Riesz projection. When $\phi \in H^{\infty}$, $T_{\phi} f = \phi f$ is a multiplication operator (a Laurent operator). The kernel $\ker T_{\phi}$ has been well studied \cite{MR1736197, MR776204, MR853630, MR1019277, MR1017680} and relates to the broad topic of ``nearly invariant'' subspaces of $H^2$. In particular there is the following theorem of Hayashi \cite{MR776204}.

\begin{Theorem}
If $\ker T_{\phi} \not = \{0\}$, then there is an outer function $F \in H^2$ such that $\ker T_{\phi} = \ker T_{\overline{z F}/F}$.
\end{Theorem}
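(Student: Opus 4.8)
The plan is to realize $\ker T_\phi$ as a \emph{nearly invariant} subspace of $H^2$ and to extract its extremal function, which will be the outer $F$ in the statement. Recall that $f \in \ker T_\phi$ exactly when $\phi f \perp H^2$, i.e.\ $\phi f \in \overline{zH^2}$. First I would verify near invariance: if $f \in \ker T_\phi$ and $f(0)=0$, write $f = zh$ with $h \in H^2$; then $z\phi h = \phi f \in \overline{zH^2}$, so the Fourier support of $\phi h$ sits in $\{-2,-3,\dots\}$, whence $\phi h \in \overline{zH^2}$ and $h = f/z \in \ker T_\phi$. Thus $\ker T_\phi$ is nearly invariant.

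Next I would introduce the extremal function. Since $\ker T_\phi$ is a closed subspace of the reproducing kernel Hilbert space $H^2$, let $F$ be a unit multiple of its reproducing kernel at the origin, normalized so $\norm{F}_2 = 1$ and $F(0) > 0$; equivalently $F$ maximizes $\Re f(0)$ over the unit ball of $\ker T_\phi$, and it satisfies $\inner{f,F} = c\,f(0)$ for all $f \in \ker T_\phi$, with $c = 1/F(0) > 0$. The crucial structural input --- and the main obstacle --- is that \emph{this extremal function is outer}. For a general nearly invariant subspace the extremal function need not be outer (Hitt, Sarason), but for kernels of Toeplitz operators Hayashi shows outerness does hold; I would reproduce this argument, exploiting the reproducing-kernel identity together with the relation $\phi F = \overline{zG}$ for some $G \in H^2$ coming from $F \in \ker T_\phi$. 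Granting outerness, Hitt's structure theorem gives $\ker T_\phi = F\,K_\Theta$, where $\Theta$ is inner with $\Theta(0)=0$, $K_\Theta = H^2 \ominus \Theta H^2$, and multiplication by $F$ is isometric from $K_\Theta$ onto $\ker T_\phi$.

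With $F$ outer, the symbol $\overline{zF}/F$ is unimodular a.e.\ on $\T$, so $T_{\overline{zF}/F}$ is a genuine Toeplitz operator, and a one-line computation shows $F$ lies in its kernel: $T_{\overline{zF}/F}F = P\big((\overline{zF}/F)F\big) = P(\overline{zF}) = 0$, since $zF \in zH^2$ makes $\overline{zF}$ supported on strictly negative frequencies. It remains to identify $\ker T_{\overline{zF}/F}$ with $F K_\Theta$. Writing a general element as $f = Fq$ with $q \in N^+$ (legitimate since $F$ is outer), one computes $P(\overline{zF}\,q)$ and finds $f \in \ker T_{\overline{zF}/F}$ precisely when $Fq \in H^2$ and $F\overline{q} \in H^2$, i.e.\ both $q$ and $\overline{q}$ multiply $F$ back into $H^2$. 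Since $F$ is outer this forces $q,\overline{q} \in N^+$, which is exactly the $\CC$-type condition (both a function and its conjugate in the Smirnov class) featured earlier in the survey. I would then match the set of admissible $q$ against $K_\Theta$ using the isometric-multiplier property of $F$ on $K_\Theta$ together with Hayashi's additional condition distinguishing Toeplitz kernels among isometric-multiplier pairs, obtaining $\ker T_{\overline{zF}/F} = F K_\Theta = \ker T_\phi$.

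I expect the two hard points to be the outerness of the extremal function $F$ and, relatedly, the verification that the admissible multipliers $q$ for the unimodular symbol $\overline{zF}/F$ are exactly $K_\Theta$; both rest on Hayashi's refinement of the Hitt--Sarason description of nearly invariant subspaces. By contrast, the near-invariance step and the membership $F \in \ker T_{\overline{zF}/F}$ are routine, and the reduction of the problem to a statement about functions $q$ with $q,\overline{q}$ both Smirnov is precisely where the machinery of this survey becomes the natural language for the argument.
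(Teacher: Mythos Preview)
The paper does not supply a proof of this result; it is quoted as a theorem of Hayashi with a citation only, so there is no in-paper argument to compare against. The question is simply whether your outline is sound, and it contains a genuine error rather than a mere gap in detail.

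You take $F$ to be the normalized reproducing kernel of $\ker T_\phi$ at the origin (the Hitt extremal function) and then assert $\ker T_{\overline{zF}/F} = F K_\Theta$. This fails already in the simplest nontrivial case. Let $\phi = \overline{z}^{\,2}$, so that $\ker T_\phi = (z^2 H^2)^{\perp} = \operatorname{span}\{1,z\}$; the extremal function is $F=1$, and Hitt's decomposition reads $1\cdot K_{z^2}$ with $\Theta = z^2$. But then $\overline{zF}/F = \overline{z}$ and $\ker T_{\overline{z}} = \C \subsetneq \operatorname{span}\{1,z\}$, so your choice of $F$ gives the wrong kernel. The outer function that actually works here is $F = 1+z$: on $\T$ one checks $\overline{z(1+z)}/(1+z) = \overline{z}^{\,2}$, whence $\ker T_{\overline{z(1+z)}/(1+z)} = \operatorname{span}\{1,z\}$; and $(1+z)/\sqrt{2}$ is \emph{not} the extremal function of this kernel. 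The upshot is that Hayashi's $F$ is manufactured from \emph{both} pieces of the Hitt data $(g,\Theta)$, not from the extremal $g$ alone; the rigidity condition you allude to serves to distinguish Toeplitz kernels among nearly invariant subspaces, but it does not force $\ker T_{\overline{zg}/g} = gK_\Theta$. Your near-invariance step, the appeal to Hayashi for outerness of the extremal function, and the computation $F\in\ker T_{\overline{zF}/F}$ are all fine; the argument breaks precisely at the identification of $F$ with the extremal function.
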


 The connection to $\RR ^{+}$ is the following \cite{MR2186351}: 

\begin{Theorem}
If $F \in H^2$ is outer then 
$$\ker T_{\overline{z F}/F} = \{(a + i b) F: a, b \in \RR ^{+}\} \cap H^2.$$
\end{Theorem}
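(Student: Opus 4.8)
The plan is to translate membership in $\ker T_{\overline{zF}/F}$ into a statement about the single boundary function $w := g/F$, and then to recognize the resulting constraint as exactly the defining property of $\CC^+ = \{a+ib : a,b \in \RR^+\}$. Note first that $\phi := \overline{zF}/F$ is unimodular a.e.\ on $\T$ (since $|\phi| = |F|/|F| = 1$), so $\phi \in L^\infty$ and $T_\phi$ is a bona fide Toeplitz operator.

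First I would unpack the kernel condition. Because $P$ is the orthogonal projection of $L^2$ onto $H^2$, a function $g \in H^2$ lies in $\ker T_\phi$ precisely when $\phi g \perp H^2$, i.e.\ $\phi g \in \overline{zH^2}$, which means $\phi g = \overline{h}$ for some $h \in zH^2$ (an $H^2$ function vanishing at the origin). Since $F$ is outer, $1/F \in N^+$, and hence $w = g/F \in N^+$. Substituting $g = wF$ into $\phi g = \overline{h}$ and conjugating gives $h = zF\overline{w}$, so the kernel condition is equivalent to $F\overline{w} \in H^2$.

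Next I would extract the algebraic structure. From $F\overline{w} = h/z \in H^2 \subseteq N^+$ together with $F$ outer, I get $\overline{w} = (F\overline{w})/F \in N^+$. Thus both $w$ and its conjugate $\overline{w}$ are boundary values of $N^+$ functions. Writing $a = \Re w = (w+\overline{w})/2$ and $b = \Im w = (w-\overline{w})/(2i)$, each is the boundary function of an $N^+$ function (a linear combination of $w$ and $\overline{w}$) that is real-valued a.e.\ on $\T$, hence each lies in $\RR^+$. Therefore $w = a + ib \in \CC^+$ and $g = (a+ib)F$ with $a,b \in \RR^+$; since $g \in H^2$, this puts $g$ in the right-hand set. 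For the reverse inclusion I would take $g = (a+ib)F \in H^2$ with $a,b \in \RR^+$ and set $w = a+ib$; on $\T$ one has $\overline{w} = a - ib = \widetilde{w} \in \RR^+ + i\RR^+ \subseteq N^+$, and since $|\widetilde{w}| = |w|$ by \eqref{eq:IsomCong} we obtain $|F\overline{w}| = |F||w| = |g| \in L^2$. As $F\overline{w} \in N^+$ with $L^2$ modulus, Smirnov's theorem \eqref{SmirnovThm} gives $F\overline{w} \in H^2$; setting $h = zF\overline{w} \in zH^2$, a direct check shows $\phi g = \overline{h} \perp H^2$, so $g \in \ker T_\phi$.

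I expect the main obstacle to be the clean passage between the Fourier/orthogonality description of the kernel and the Smirnov-class statement $\overline{w} \in N^+$. One must be careful that $g/F$ genuinely lies in $N^+$ (this uses that $F$ is outer, so that the ratio of the outer parts of $g$ and $F$ is again outer), and that the converse direction really needs Smirnov's theorem: the modulus identity $|F\overline{w}| = |g| \in L^2$ does \emph{not} by itself place $F\overline{w}$ in $H^2$ without first knowing it is of Smirnov type. The conceptual key that makes everything fit is the earlier characterization that ``$w$ and $\overline{w}$ both Smirnov'' is exactly what it means for $w$ to lie in $\CC^+$.
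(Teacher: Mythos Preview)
The paper does not actually prove this theorem; it is stated with a citation to \cite{MR2186351} and no argument is given in the text. So there is nothing to compare against directly.

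That said, your argument is correct and is essentially the standard one. The identification of $\ker T_{\overline{zF}/F}$ with those $g\in H^2$ for which $F\,\overline{g/F}\in H^2$ is exactly right: $T_\phi g=0$ with $\phi=\overline{zF}/F$ unimodular means $\phi g\in\overline{zH^2}$, and unwinding gives $zF\overline{w}\in zH^2$ with $w=g/F$. Your use of ``$F$ outer'' to place both $w$ and $\overline w$ in $N^+$, and then the observation that $w,\overline w\in N^+$ is equivalent to $w\in\CC^+=\RR^++i\RR^+$, is the crux and matches the paper's framework (see the discussion around \eqref{eq:CC} and \eqref{eq:IsomCong}). The reverse inclusion via $|F\widetilde{w}|=|Fw|=|g|\in L^2$ together with Smirnov's theorem \eqref{SmirnovThm} is clean and correct. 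One small cosmetic point: in the forward direction you might state explicitly that $g/F\in N^+$ because $F$ outer forces the denominator in the $N^+$ representation of the quotient to remain outer; you allude to this in your ``main obstacle'' paragraph, so you clearly have it in mind.
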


\subsection{A connection to pseudocontinuable functions}

A widely studied theorem of Beurling \cite{Duren, Garnett} says that the invariant subspaces of the shift operator 
$$S: H^2 \to H^2, \quad (S f)(z) = z f(z),$$
take the form $u H^2$ where $u$ is an inner function. Taking annihilators shows that the invariant subspaces of the backward shift operator 
$$S^{*}: H^2 \to H^2, \quad (S^{*} f)(z) = \frac{f(z) - f(0)}{z},$$
are of the form $(u H^2)^{\perp}$. Functions in $(u H^2)^{\perp}$ are often called the pseudo-continuable functions due to a theorem in \cite{MR0270196} (see also \cite{MR1895624}) which relates each $f \in (u H^2)^{\perp}$ with a meromorphic function on the exterior disk via matching radial boundary values.  Along the lines of our discussion of Toeplitz operators, we have the following result \cite{MR2186351}. 

\begin{Theorem}
For an inner function $u$ and $\zeta \in \T$ for which 
$$\lim_{r \to 1^{-}} u(r \zeta) = u(\zeta)$$
exists, define 
$$k_{\zeta}(z) = \frac{1 - \overline{u(\zeta)} u(z)}{1 - \overline{\zeta} z}.$$
Then we have 
$$(u H^2)^{\perp} = \{(a + i b) k_{\zeta}: a, b \in \RR ^{+}\} \cap H^2.$$
\end{Theorem}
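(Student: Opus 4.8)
The plan is to recognize the model space $(uH^2)^{\perp}$ as the kernel of a single Toeplitz operator and then invoke the preceding theorem on $\ker T_{\overline{zF}/F}$ with $F = k_{\zeta}$. First I would record the standard identity $(uH^2)^{\perp} = \ker T_{\overline{u}}$: a function $f \in H^2$ is orthogonal to $u\,\zeta^n$ for every $n \geq 0$ precisely when $\widehat{f\overline{u}}(n) = 0$ for all $n \geq 0$, i.e. when $P(\overline{u}f) = 0$. Next I would check that $k_{\zeta}$ is an outer function lying in $H^2$. Its denominator $1 - \overline{\zeta}z$ is outer, and its numerator $1 - \overline{u(\zeta)}u$ has the form $1 - \psi$ with $\psi = \overline{u(\zeta)}u$ inner, hence is outer (as noted earlier in the paper); therefore the quotient $k_{\zeta}$ is outer. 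Membership in $H^2$ is exactly the condition that $\zeta$ be a point at which $u$ admits the boundary kernel (an angular-derivative hypothesis implicit in the statement), so that $k_{\zeta} \in (uH^2)^{\perp}$ is a genuine nonzero element.

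The heart of the argument is the computation of the unimodular symbol $\overline{z}\,\overline{k_{\zeta}}/k_{\zeta}$ on $\T$. Working with boundary values, where $\overline{z} = 1/z$ and $\overline{u} = 1/u$, I would simplify using the two elementary identities
\[
\frac{u - u(\zeta)}{1 - \overline{u(\zeta)}u} = -u(\zeta) \qquad\text{and}\qquad \frac{1 - \overline{\zeta}z}{z - \zeta} = -\overline{\zeta},
\]
both of which follow from $|u(\zeta)| = |\zeta| = 1$. A short manipulation then gives $\overline{k_{\zeta}}/k_{\zeta} = \overline{\zeta}\,u(\zeta)\,z/u$, and hence
\[
\frac{\overline{z}\,\overline{k_{\zeta}}}{k_{\zeta}} = \overline{\zeta}\,u(\zeta)\,\overline{u} = c\,\overline{u}, \qquad c := \overline{\zeta}\,u(\zeta),
\]
with $|c| = 1$. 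Since $c$ is a unimodular constant, $T_{c\overline{u}} = c\,T_{\overline{u}}$, so the two operators have the same kernel and $\ker T_{\overline{z}\,\overline{k_{\zeta}}/k_{\zeta}} = \ker T_{\overline{u}} = (uH^2)^{\perp}$.

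Finally, applying the preceding theorem with the outer function $F = k_{\zeta} \in H^2$ yields
\[
(uH^2)^{\perp} = \ker T_{\overline{z}\,\overline{k_{\zeta}}/k_{\zeta}} = \{(a + ib)k_{\zeta} : a, b \in \RR^{+}\} \cap H^2,
\]
which is the assertion. I expect the main obstacle to be not the symbol computation (which is routine once the two cancellation identities are spotted) but the verification that $k_{\zeta}$ is a legitimate nonzero member of $H^2$; this is where the hypothesis on the boundary behavior of $u$ at $\zeta$ must be used carefully, so that the previous theorem is applied to a bona fide outer function in $H^2$ rather than to a merely formal boundary expression.
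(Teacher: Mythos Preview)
The paper does not actually supply a proof of this theorem; it only cites \cite{MR2186351} and remarks that the result is ``along the lines of our discussion of Toeplitz operators.'' Your proposal is precisely the argument that hint points to: identify $(uH^2)^{\perp}=\ker T_{\overline{u}}$, compute the symbol $\overline{z\,k_{\zeta}}/k_{\zeta}$ on $\T$, recognize it as a unimodular constant times $\overline{u}$, and invoke the preceding theorem with $F=k_{\zeta}$. Your boundary computation is correct, and your observation that $k_{\zeta}$ is outer (numerator $1-\overline{u(\zeta)}u$ and denominator $1-\overline{\zeta}z$ are each outer) is exactly what is needed.

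You are also right to flag the $H^2$ membership of $k_{\zeta}$ as the delicate point. The previous theorem is stated for outer $F\in H^2$, and $k_{\zeta}\in H^2$ is equivalent to $u$ having a finite angular derivative at $\zeta$ (Julia--Carath\'eodory), which is strictly stronger than the mere existence of the radial limit $u(\zeta)$ assumed in the statement. So either one reads the hypothesis as including the angular-derivative condition (as you do), or one notes that the Toeplitz-kernel description $\{(a+ib)F:a,b\in\RR^{+}\}\cap H^2$ in fact persists for outer $F\in N^{+}$, since the proof in \cite{MR2186351} only uses that $\overline{zF}/F$ is a unimodular symbol and that $F$ is outer; Smirnov's theorem \eqref{SmirnovThm} then handles the intersection with $H^2$. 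Either reading closes the gap you identified, and your overall strategy is sound and matches the intended route.
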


We point out that similar results hold in $H^p$ when $1 \leq p < \infty$.

\subsection{A connection to unbounded Toeplitz operators}

For $\phi \in \RR ^{+}$ one can define the {\em unbounded} Toeplitz operator by first defining its domain 
$$\mathscr{D} = \{f \in H^2: \phi f \in H^2\}$$ and then defining the operator $T_{\phi}$ by 
$$T_{\phi}: \mathscr{D} \subseteq H^2 \to H^2, \quad T_{\phi} f = \phi f, \quad f \in \mathscr{D}.$$
Helson \cite{Helson} showed that the domain $\mathscr{D}$ of $T_{\phi}$ is dense in $H^2$: If $S f = z f$ is the shift operator on $H^2$ then $S \mathscr{D} \subseteq \mathscr{D}$ and so $S \mathscr{D}^{-} \subseteq \mathscr{D}^{-}$. By Beurling's classification of the $S$-invariant subspaces of $H^2$ we have 
$\mathscr{D}^{-} = I H^2$ for some inner function $I$. If $g \in H^2$ and $I g \in \mathscr{D}$, then 
$|\phi g|^2 = |\phi (g I)|^2$ and this last quantity is integrable since $\phi (g I) \in H^2$. This means that $g \in \mathscr{D}$ and so the common inner divisor $I$ of $\mathscr{D}$ is equal to one, making $\mathscr{D}^{-} = H^2$. 

In \cite{MR2418122} Sarason identified $\mathscr{D}$ as follows: One can always write $\phi$ as 
$$\phi = \frac{b}{a},$$
where $a, b \in H^{\infty}$, $a$ is outer, $a(0) > 0$, and $|a|^2 + |b|^2 = 1$ a.e.\! on $\T$.   Sarason  showed that 
$$\mathscr{D} = a H^2$$ and so, since $a$ is outer, one can see, via Beurling's theorem \cite[p. 114]{Duren}, that $\mathscr{D}$ is dense in $H^2$. This verifies what was shown by Helson above.

Since $\phi \in \RR ^{+}$, we have
$$\langle T_{\phi} f, g\rangle = \langle f, T_{\phi} g\rangle, \quad f, g \in \mathscr{D},$$
and thus $T_{\phi}$ is an unbounded symmetric operator on $H^2$. Furthermore, $T_{\phi}$ is also a closed operator. General theory of symmetric operators \cite{A-G} says that $T_{\phi} - w I$ has closed range for every $w \not \in \R$. Furthermore, if
$$\eta(w) = \mbox{dim}((\mbox{ran} (T_{\phi} - w I))^{\perp}),$$
where $\mbox{ran}$ denotes the range, 
then $\eta$ is constant on each of the half planes $\{\Im z > 0\}$ and $\{\Im z < 0\}$. The numbers $\eta(i)$ and $\eta(-i)$ are called the {\em deficiency indices} of $T_{\varphi}$. The following comes from Helson \cite{Helson}.

\begin{Theorem}
\hfill
\begin{enumerate}
\item If $\phi \in \RR ^{+}$ and $\eta(i)$ and $\eta(-i)$ are finite, then $\phi$ is a rational function. 
\item Given any pair $(m, n)$, where $m, n \in \mathbb{N} \cup \{\infty\}$, there is a $\phi \in \RR ^{+}$ such that $\eta(i) = m$ and $\eta(-i) = n$. 
\end{enumerate}
\end{Theorem}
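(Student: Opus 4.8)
The plan is to identify the two deficiency spaces with model subspaces of $H^2$ and then read off both parts. Using Sarason's description recalled above, write $\phi = b/a$ with $a,b \in H^{\infty}$, $a$ outer, $a(0)>0$, $|a|^2+|b|^2=1$ a.e.\ on $\T$, and $\mathscr{D}=aH^2$. Since $\phi$ is real a.e.\ on $\T$, we have $b/a=\overline{b/a}$ there, so $a\overline{b}$ is real a.e.\ on $\T$; a direct computation then gives $|b\pm ia|=1$ a.e.\ on $\T$, and since $b\pm ia \in H^{\infty}$, both $\psi_+:=b+ia$ and $\psi_-:=b-ia$ are inner. Matching with Helson's representation $\phi=i(\psi_1+\psi_2)/(\psi_1-\psi_2)$ from Theorem \ref{Theorem:Helson}, one checks that $\psi_+=\psi_1$ and $\psi_-=\psi_2$ up to unimodular constants.

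The point is that the relevant ranges are already closed Beurling subspaces. For $f=ah\in\mathscr{D}$ we have $(T_\phi-iI)f=(b-ia)h=\psi_- h$, so $\operatorname{ran}(T_\phi-iI)=\psi_- H^2$ and therefore
$$\operatorname{ran}(T_\phi-iI)^{\perp}=H^2\ominus\psi_- H^2=:K_{\psi_-}.$$
Likewise $\operatorname{ran}(T_\phi+iI)^{\perp}=K_{\psi_+}$. Hence $\eta(i)=\dim K_{\psi_-}$ and $\eta(-i)=\dim K_{\psi_+}$. I would then invoke the standard fact that a model space $K_\theta$ is finite dimensional exactly when $\theta$ is a finite Blaschke product, in which case $\dim K_\theta=\deg\theta$. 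Part (a) is now immediate: if both indices are finite then $\psi_1$ and $\psi_2$ are finite Blaschke products, hence rational, so $\phi=i(\psi_1+\psi_2)/(\psi_1-\psi_2)$ is rational.

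For part (b) the strategy reverses this: given $(m,n)$, I would build relatively prime inner functions $\psi_1,\psi_2$ with $\psi_1-\psi_2$ outer and $\dim K_{\psi_2}=m$, $\dim K_{\psi_1}=n$, set $\phi=i(\psi_1+\psi_2)/(\psi_1-\psi_2)$, and invoke the (trivial) converse of Theorem \ref{Theorem:Helson}(b) to conclude $\phi\in\RR^{+}$; the reduction above then yields $\eta(i)=m$ and $\eta(-i)=n$. To meet the dimension constraints I use a degree-$k$ finite Blaschke product when the index equals a finite $k\geq 1$, a singular inner function (or infinite Blaschke product) when it equals $\infty$, and a unimodular constant when it equals $0$.

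The main obstacle is satisfying coprimality and the outerness of $\psi_1-\psi_2$ at the same time: even two single Blaschke factors, chosen naively, yield a difference with a zero in $\D$, so the indices cannot be realized by an arbitrary pair. When one index is $0$ this is painless, since then the difference is, up to a unimodular constant, $1-\psi$ with $\psi$ inner, and $1$ minus an inner function is outer. In general I would place the zeros and singular supports of $\psi_1$ and $\psi_2$ on mutually singular sets (guaranteeing coprimality and excluding a common singular factor) and insert a suitable unimodular constant so that on the curve where $|\psi_1|=|\psi_2|$ the quotient $\psi_1/\psi_2$ avoids the value $1$, forcing $\psi_1-\psi_2$ to be zero free on $\D$. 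For finite Blaschke products this already finishes the job, since then $\psi_1-\psi_2$ is rational and zero free is the same as outer; the delicate point, and the technical heart of the construction, is to verify in the unbounded-index cases that the zero-free difference also carries no singular inner factor, so that it is genuinely outer.
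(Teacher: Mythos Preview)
The paper does not actually supply a proof of this theorem; it simply attributes the result to Helson \cite{Helson} and states it. So there is no ``paper's own proof'' to compare against, and your attempt has to be judged on its own merits.

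Your key identification is correct and is exactly the right engine for the result. From $\mathscr D = aH^2$ and $\phi=b/a$ you get $(T_\phi - iI)(ah) = (b-ia)h$, so $\operatorname{ran}(T_\phi - iI)=\psi_-H^2$ with $\psi_-=b-ia$ inner, and likewise for $\psi_+$; hence $\eta(i)=\dim K_{\psi_-}$ and $\eta(-i)=\dim K_{\psi_+}$. Together with the standard fact that $K_\theta$ is finite-dimensional precisely when $\theta$ is a finite Blaschke product, this gives a clean and complete proof of (a): both $\psi_\pm$ are finite Blaschke products, whence $a=(\psi_+-\psi_-)/(2i)$ and $b=(\psi_++\psi_-)/2$ are rational and $\phi=b/a$ is rational. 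This is essentially Helson's argument.

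Part (b), however, is not finished. You correctly reduce the problem to producing relatively prime inner $\psi_1,\psi_2$ of prescribed model-space dimensions with $\psi_1-\psi_2$ outer, but the construction you sketch does not close. The sentence ``insert a suitable unimodular constant so that on the curve where $|\psi_1|=|\psi_2|$ the quotient $\psi_1/\psi_2$ avoids the value $1$'' is not a proof even in the finite case: that level curve can be noncompact (it meets $\T$), and you have not shown that the image of $\psi_1/\psi_2$ on it omits some point of $\T$. So already for two finite Blaschke products of given degrees you have not established that a zero-free difference exists. For the infinite-index cases you explicitly flag the further issue of excluding a singular inner factor in $\psi_1-\psi_2$, and you do not resolve it. In short, (b) has the right architecture but a genuine gap at the ``technical heart'' you yourself identify. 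One way to close it, incidentally, is suggested by the subsection following the theorem in the paper: the indices coincide with the valences of $\phi$ at points of the upper and lower half planes, so one can construct $\phi$ directly (rather than $\psi_1,\psi_2$) with prescribed valences, bypassing the outerness obstacle.
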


Cowen \cite{Cowen} showed that two analytic Toeplitz operators $T_{\phi_1}, T_{\phi_2}$, where $\phi_1, \phi_2 \in H^{\infty}$, are unitarily equivalent if and only if $\phi_1 = \phi_2 \circ \psi$ for some automorphism $\psi$ of $\D$. A similar result was shown in \cite{MR3004956} when $\phi_1, \phi_1 \in \RR ^{+}$ and $T_{\phi_1}, T_{\phi_2}$ have deficiency indices $(1, 1)$. 

\subsection{Value distributions}

For $\phi \in \RR ^{+}$ the connection to unbounded Toeplitz operators (from the previous section) points out a useful connection to 
$$\mbox{card} \{z: \phi(z) = \beta\}, \quad \beta \not \in \R.$$ The unbounded symmetric Toeplitz operator $T_{\phi}$ has a densely defined adjoint $T_{\phi}^{*}$ and the Cauchy kernels 
$$k_{\lambda}(z) := \frac{1}{1 - \overline{\lambda} z}$$ belong to the domain of $T_{\phi}^{*}$. Furthermore, standard arguments show that 
$$T_{\phi}^{*} k_{\lambda} = \overline{\phi(\lambda)} k_{\lambda}.$$
Since 
$$(\mbox{ran} (T_{\phi}  - w I))^{\perp} = \ker({T_{\phi}^{*} - \overline{w} I}),$$
we see that 
$$\ker({T_{\phi}^{*} - \overline{w} I}) = \bigvee\{k_{\lambda}: \phi(\lambda) = w\}$$
and
$$\mbox{dim} (\ker({T_{\phi}^{*} - \overline{w} I})) = \mbox{card}\{\lambda: \phi(\lambda) = w\}.$$
Furthermore, from our earlier discussion of the deficiency indices of (unbounded) symmetric operators, the function 
$$w \mapsto \mbox{card}\{\lambda: \phi(\lambda) = w\}$$ is constant on each of the connected regions $\{\Im z > 0\}$ and $\{\Im z < 0\}$.

\appendix

\bibliography{RealSurvey}

\end{document}